\newcommand{\m}{\ensuremath{\mathfrak{m}}\xspace}
\newcommand{\vi}{\ensuremath{\varepsilon}\xspace}
\newcommand{\bk}{\ensuremath{\Bbbk}\xspace}
\newcommand{\Z}{\ensuremath{\mathbb{Z}}\xspace}
\newcommand{\hb}{\ensuremath{\mathscr{H}_{\lambda,\kappa}}\xspace}
\newcommand{\scrh}{\ensuremath{\mathscr{H}}\xspace}
\newcommand{\one}[1]{\ensuremath{{#1}_{(1)}}\xspace}
\newcommand{\two}[1]{\ensuremath{{#1}_{(2)}}\xspace}
\newcommand{\three}[1]{\ensuremath{{#1}_{(3)}}\xspace}
\newcommand{\lev}[1]{\ensuremath{\mathscr{L}_{\leqslant #1}}\xspace}
\newcommand{\tangle}[1]{\ensuremath{\langle #1 \rangle}\xspace}
\DeclareMathOperator{\id}{\ensuremath{id}\xspace}
\DeclareMathOperator{\ad}{\ensuremath{ad}\xspace}
\DeclareMathOperator{\ch}{\ensuremath{char}\xspace}
\DeclareMathOperator{\im}{\ensuremath{im}\xspace}
\DeclareMathOperator{\Sym}{\ensuremath{Sym}\xspace}
\DeclareMathOperator{\Hom}{\ensuremath{Hom}\xspace}
\DeclareMathOperator{\End}{\ensuremath{End}\xspace}
\DeclareMathOperator{\Prim}{\ensuremath{Prim}\xspace}
\DeclareMathOperator{\codim}{\ensuremath{codim}\xspace}
\DeclareMathOperator{\fix}{\ensuremath{Fix}\xspace}
\DeclareMathOperator{\rad}{\ensuremath{Rad}\xspace}
\DeclareMathOperator{\Ext}{\ensuremath{Ext}\xspace}
\newtheorem{theorem}{Theorem}[section]
\newtheorem{lemma}[theorem]{Lemma}
\newtheorem{prop}[theorem]{Proposition}
\newtheorem{cor}[theorem]{Corollary}
\theoremstyle{definition}
\newtheorem{definition}[theorem]{Definition}
\newtheorem{stand}[equation]{Standing Assumption}
\theoremstyle{remark}
\newtheorem{remark}[theorem]{Remark}
\numberwithin{equation}{section}
\begin{document}
\title[Nil-Coxeter algebras, cocommutative algebras, PBW
property]{Generalized nil-Coxeter algebras, cocommutative algebras, and
the PBW property}

\author{Apoorva Khare}

\address{Departments of Mathematics and Statistics, Stanford University,
Stanford, CA - 94305}

\email{khare@stanford.edu}

\subjclass[2010]{Primary 16S80; Secondary 16S40, 16T15, 20C08, 20F55}

\keywords{Cocommutative algebra, PBW theorem, graded deformation, Jacobi
identity, symplectic reflection, Coxeter group, generalized nil-Coxeter
algebra}


\begin{abstract}
Poincar\'e--Birkhoff--Witt (PBW) Theorems have attracted significant
attention since the work of Drinfeld (1986), Lusztig (1989), and
Etingof--Ginzburg (2002) on deformations of skew group algebras $H \ltimes
\Sym(V)$, as well as for other cocommutative Hopf algebras $H$.
In this paper we show that such PBW theorems do not require the full Hopf
algebra structure, by working in the more general setting of a
``cocommutative algebra'', which involves a coproduct but not a counit or
antipode. Special cases include infinitesimal Hecke algebras, as well as
symplectic reflection algebras, rational Cherednik algebras, and more
generally, Drinfeld orbifold algebras. In this generality we identify
precise conditions that are equivalent to the PBW property, including a
Yetter--Drinfeld type compatibility condition and a Jacobi identity. We
also characterize the graded deformations that possess the PBW property.
In turn, the PBW property helps identify an analogue of symplectic
reflections in general cocommutative bialgebras.

Next, we introduce a family of cocommutative algebras outside the
traditionally studied settings: generalized nil-Coxeter algebras. These
are necessarily not Hopf algebras, in fact, not even (weak) bialgebras.
For the corresponding family of deformed smash product algebras, we
compute the center as well as abelianization, and classify all simple
modules.
\end{abstract}
\maketitle

\setcounter{page}{139}

\section{Introduction}

In the study of deformation algebras, their structure and
representations, one commonly begins by understanding their connection to
the corresponding associated graded algebras (which are generally better
behaved). Such connections of course provide desirable ``monomial
bases'', but also additional structural and representation-theoretic
knowledge.

A first step in understanding these connections involves showing that
these filtered algebras satisfy the \textit{Poincar\'e--Birkhoff--Witt
(PBW) property}, in that they are isomorphic as vector spaces to their
associated graded algebras.
Such results are known as \textit{PBW theorems} in the literature. The
terminology of course originates with the classical result for the
universal enveloping algebra of a Lie algebra. However, it has gathered
renewed attention over the past few decades owing to tremendous interest
in the study of orbifold algebras and their generalizations, which we now
briefly describe.

In a seminal paper \cite{Dr}, Drinfeld pioneered the study of smash
product algebras of the form $\bk G \ltimes \Sym(V)$, where a group $G$
acts on a $\bk$-vector space $V$. Drinfeld's results were rediscovered
and extended by Etingof and Ginzburg in their landmark paper \cite{EG},
which introduced symplectic reflection algebras and furthered our
understanding of rational Cherednik algebras. These algebras serve as
``non-commutative'' coordinate rings of the orbifolds $V/G$; see
\cite{Lu} for a related setting.
Subsequently, Etingof, Ginzburg, and Gan replaced the group by algebraic
distributions of a reductive Lie group $G$. This led to the study of
infinitesimal Hecke algebras in \cite{EGG} (and several recent papers),
where $U \mathfrak{g}$ acts on $\Sym(V)$, with $\mathfrak{g} = {\rm
Lie}(G)$. These families of deformed algebras continue to be popular and
important objects of study, with connections to representation theory,
combinatorics, and mathematical physics.

A common theme underlying all of these settings is that a cocommutative
Hopf algebra $H$ acts on the vector space $V$ and hence on $\Sym(V)$. The
aforementioned families of algebras $\hb$ are created by deforming two
sets of relations:
\begin{itemize}
\item The relations $V \wedge V \mapsto 0$ in the smash product algebra
$H \ltimes \Sym(V)$ are deformed using an anti-symmetric bilinear form
$\kappa : V \wedge V \to H$, or more generally, $\kappa : V \wedge V \to
H \oplus V$. These deformed relations feature in \cites{Dr,EG,EGG}, and
follow-up works.

\item The relations $g \cdot v = g(v) g$ for grouplike elements $g$ with
$H$ a group algebra, were deformed by Lusztig \cite{Lu} to create graded
affine Hecke algebras, using a bilinear form $\lambda : H \otimes V \to
H$.
\end{itemize}

The forms $\lambda, \kappa$ define a filtered algebra, and an important
question is to characterize those deformations $\hb$ whose associated
graded algebra is isomorphic to $\scrh_{0,0} = H \ltimes \Sym(V)$.
Such parameters $\lambda,\kappa$ are said to correspond to \textit{PBW
deformations}, and have been studied in the aforementioned works as well
as by Braverman and Gaitsgory \cite{BG} among others. More recently, in a
series of papers \cites{SW1,SW2,SW3}, Shepler and Witherspoon have shown
PBW theorems in a wide variety of settings (skew group algebras, Drinfeld
orbifold algebras, Drinfeld Hecke algebras, \dots), that encompass many
of the aforementioned cases. We also point the reader to the
comprehensive survey \cite{SW4} for more on the subject. This includes
the case of $\Sym(V)$ replaced by a quantum symmetric algebra. Perhaps
one of the most general versions in the literature is the recent work
\cite{WW1} by Walton and Witherspoon, in which $H$ is replaced by a Hopf
algebra, and $\Sym(V)$ by a Koszul algebra. For completeness, we also
mention work in related flavors: \cite{HOZ} studies generalized Koszul
algebras, while \cites{BaBe,WW2} analyze deformations of Hopf algebra
actions on ``doubled'' pairs of module algebras.\smallskip

We now point out some of the novel features and extensions in the present
paper. First, all of the aforementioned settings involve $H$ being a
bialgebra -- in fact, a Hopf algebra. In this paper we isolate the
structure required to study the PBW property, and show that it includes
the coproduct but not the counit or antipode. More precisely, we work in
the more general framework of a (cocommutative) algebra with coproduct.
This is a strictly weaker setting than that of a bialgebra, as it also
includes examples such as the nil-Coxeter (or nil-Hecke) algebra
associated to a Weyl group, $NC_W$. Recall that these algebras were
originally introduced by Fomin and Stanley \cite{FS} as Demazure
operators in the study of Schubert polynomials, though they appear
implicitly in previous work \cites{BGG,KK} on the cohomology of
generalized flag varieties for semisimple and Kac--Moody groups,
respectively; see also \cite{LS}. Nil-Coxeter algebras have subsequently
been studied in their own right \cites{Br,Y} as well as in the context of
categorification \cites{Kho,KL}, among others.

Nil-Coxeter algebras are necessarily not bialgebras (hence not Hopf
algebras). Thus, deformations over such cocommutative algebras have not
been considered to date in the literature.

Second, we introduce a novel class of Hecke-type algebras, the
\textit{generalized nil-Coxeter algebras}, which encompass the usual
nil-Coxeter algebras. These algebras have not been studied in the
literature. In this paper we will specifically study deformations over
generalized nil-Coxeter algebras. Moreover, our results are
characteristic-free.

An additional novelty of the present work is that in all of the
aforementioned works in the literature, either the bilinear form
$\kappa_V : V \wedge V \to V$ is assumed to be identically zero, or/and
$\lambda : H \otimes V \to H$ is identically zero. The present paper
addresses this gap by working with algebras for which all three
parameters $\lambda, \kappa_V, \kappa_A = \kappa - \kappa_V$ are allowed
to be nonzero. (All notation is explained in Definition \ref{Dhk} below.)

\subsection*{Organization of the paper}

We now outline the contents of the present paper, which can be thought of
as having two parts. In Section \ref{S2}, we introduce the general notion
of a \textit{cocommutative $\bk$-algebra} $A$, i.e., an algebra with a
multiplicative coproduct map that is cocommutative (over a unital ground
ring $\bk$). We next state and prove one of our main results: a PBW-type
theorem for deformations $\hb$ of the smash product algebra $\scrh_{0,0}
= A \ltimes \Sym(V)$. Here, $A$ acts on tensor powers of $V$ via the
coproduct, and on the symmetric algebra because of cocommutativity.

In Section \ref{S3}, we explain the connection between the PBW theorem
and deformation theory. Specifically, we identify the graded
$\bk[t]$-deformations of $\scrh_{0,0}$ whose fiber at $t=1$ has the PBW
property. This extends various results in the literature; see
\cites{SW1,SW3}.
The first part of the paper concludes in Section \ref{S4}, by examining
well-known notions in the Hopf algebra literature in the broader setting
of cocommutative algebras. This includes studying the cases where $A$ is
a cocommutative bialgebra or Hopf algebra. We classify the parameters
$\lambda, \kappa$ for which $\hb$ has the same structure, and relate the
PBW property to the Yetter--Drinfeld condition, a natural compatibility
condition that arises in Hopf-theoretic settings. We also extend the
notion of `symplectic reflections' from groups to all cocommutative
bialgebras.

In the second part of the paper, we study a specific family of
cocommutative algebras that are not yet fully explored in the literature.
Thus, in Section \ref{S5} we introduce a family of \textit{generalized
nil-Coxeter algebras} associated to a Coxeter group $W$; these are
closely related to Coxeter groups and their generalizations studied by
Coxeter and Shephard--Todd \cites{Cox,Co,ST}.

Generalized nil-Coxeter algebras are necessarily not bialgebras;
thus they fall strictly outside the Hopf-theoretic setting. In the
remainder of the paper, we study the deformations $\hb$ over generalized
nil-Coxeter algebras. We first study the Jacobi identity in such algebras
$\hb$, and classify all Drinfeld-type deformations $\scrh_{0, \kappa}$
with the PBW property. In the final section of the paper, we study
additional properties of the algebras $\hb$, including computing the
center and abelianization, and classifying simple modules.

\section{Cocommutative algebras, smash products, and the PBW
theorem}\label{S2}

\noindent \textbf{Global assumptions:}
Throughout this paper, we work over a ground ring $\bk$, which is a
unital commutative ring. 
 We also fix a cocommutative $\bk$-algebra $(A,\Delta)$, defined below,
and a $\bk$-free $A$-module $V$.

By $\dim V$ for a free $\bk$-module $V$, we will mean the (possibly
infinite) $\bk$-rank of $V$. In this paper, all $\bk$-modules, including
all $\bk$-algebras, are assumed to be $\bk$-free. Unless otherwise
specified, all (Hopf) algebras, modules, and bases of modules are with
respect to $\bk$, and all tensor products are over $\bk$.

\subsection{Cocommutative algebras and the PBW theorem}

We begin by introducing the main construction of interest and the main
result of the first part of this paper.

\begin{definition}
Suppose $A$ is a unital associative $\bk$-algebra.
\begin{enumerate}
\item $A$ is an \textit{algebra with coproduct} if there exists a
$\bk$-algebra map $\Delta : A \to A \otimes_\bk A$ called the
\textit{coproduct}, such that $\Delta(1) = 1 \otimes 1$ and $\Delta$ is
coassociative, i.e., $(\Delta \otimes 1) \circ \Delta = (1 \otimes
\Delta) \circ \Delta : A \to A \otimes A \otimes A$.

\item An algebra with coproduct is said to be \textit{cocommutative} if
$\Delta = \Delta^{op}$.
\end{enumerate}
\end{definition}

Notice that bialgebras and Hopf algebras (with the usual coproduct) are
examples of algebras with coproduct (with $\bk$ a field). As pointed out
to us by Susan Montgomery, one could \textit{a priori} have considered
weak bialgebras (these feature prominently in the theory of fusion
categories \cite{ENO}), but these provide no additional examples, as
explained at the end of \cite{BNS}*{\S 2.1}:
since $\Delta(1) = 1 \otimes 1$ by assumption,
a cocommutative algebra is a bi/Hopf-algebra
if and only if it is a weak bi/Hopf-algebra.
Additional examples do arise, however, using nil-Coxeter algebras, as
explained in Remark \ref{Rweak} below. These algebras show that the
notion of an algebra with coproduct is strictly weaker than that of a
(weak) bialgebra.

We also remark that every unital $\bk$-algebra $A$ is an algebra with
coproduct, if we define $\Delta_L(a) := a \otimes 1$ or $\Delta_R(a) := 1
\otimes a$. (Thus, the definition essentially involves a choice of
coproduct.) However, $A$ need not have a cocommutative coproduct in
general.

Given $a \in A$, write $\Delta(a) = \sum \one{a} \otimes \two{a}$ and
$\Delta^{op}(a) = \sum \two{a} \otimes \one{a}$, in the usual Sweedler
notation. We now use $\Delta$ to first define tensor and symmetric
product $A$-module algebras, as well as \textit{undeformed Drinfeld Hecke
algebras}.
Suppose $(A, \Delta)$ acts on a free $\bk$-module $V$ (not necessarily of
finite rank), denoted by $v \mapsto a(v)$. Notice that $TV := T_\bk V$
has an augmentation ideal $T^+V := V \cdot T_\bk V$, and this ideal is an
$A$-module algebra via:
\[
a (v_1 \otimes \cdots \otimes v_n) := \sum \one{a}(v_1) \otimes
\cdots \otimes a_{(n)}(v_n), \qquad \forall a \in A,\ v_1, \dots, v_n \in
V,\ n \geqslant 1.
\]

\noindent We do not include the case $n=0$ here, since $A$ does not have
a counit $\vi$.

\begin{definition}
Given a $\bk$-algebra $A$, let $A^{mult}$ denote the left $A$-module $A$,
under left multiplication.
Now given $(A,\Delta)$ and $V$ as above, the \textit{smash product} of
$TV$ and $A$, denoted by $TV \rtimes A^{mult}$, is defined to be the
$\bk$-algebra $T (V \oplus A^{mult})$, with the multiplication relations
given by $a \cdot a' := aa'$ in $A$,
$({\bf v'} \otimes a') \cdot (1 \otimes a) = {\bf v'} \otimes a'a$, and
\[
({\bf v'} \otimes a') \cdot({\bf v} \otimes a) := \sum ({\bf v'} \cdot
\one{a'}({\bf v})) \otimes \two{a'} \cdot a, \qquad \forall a,a' \in A,
{\bf v'} \in TV, \ {\bf v} \in T^+ V.
\]
\end{definition}

We use $- \rtimes A$ rather than $A \ltimes -$ in this paper. Also note
that for $1_A$ to commute with $V$ requires $\Delta(1) = 1 \otimes 1$ as
above.
Now denote by $\wedge^2 V \subset V \otimes_\bk V$ the $\bk$-span of $v
\wedge v' := v \otimes v' - v' \otimes v$; then $\wedge^2 V$ is an
$A$-submodule of $T^+ V$ because of the cocommutativity assumption on
$A$, which implies that $a(v_1 \wedge v_2) = \sum \one{a}(v_1) \wedge
\two{a}(v_2)$. Thus, one can quotient $TV \rtimes A$ by the related
two-sided ``$A$-module ideal'', to define:
\begin{equation}
\scrh_{0,0}(A,V) = \scrh_{0,0} = \Sym(V) \rtimes A := \frac{TV \rtimes
A}{(TV \cdot \wedge^2 V \cdot TV) \rtimes A}.
\end{equation}

\noindent The algebra $\scrh_{0,0}(A,V)$ will be referred to as the
\textit{smash product of $\Sym(V)$ and $A$}.
We are now able to introduce deformations of this smash product algebra.

\begin{definition}\label{Dhk}
Given $(A, \Delta)$ and $V$ as above, as well as bilinear forms $\lambda
\in \Hom_\bk(V \otimes A, A)$ and $\kappa \in \Hom_\bk(\wedge^2 V, A
\oplus V)$, the \textit{deformed smash product algebra $\hb = \hb(A,V)$
with parameters $\lambda, \kappa$} is defined to be the quotient of $T(V
\oplus A)$ by the multiplication in $A$ and by
\begin{equation}
a v - \sum \one{a}(v) \two{a} = \lambda(a,v), \quad v v' - v'v =: [v,v']
= \kappa(v,v'), \quad \forall a \in A,\ v,v' \in V.
\end{equation}

\noindent Also define $\kappa_V \in \Hom_\bk(V \wedge V, V)$ and
$\kappa_A \in \Hom_\bk(V \wedge V, A)$ to be the projections of $\kappa$
to $V,A$ respectively.
\end{definition}

\noindent Observe that $\lambda$ being trivial is equivalent to the
$A$-action preserving the grading on $\Sym(V)$.
Moreover, we will write $\hb$ instead of $\hb(A,V)$ if $A,V$ are clear
from context. 

The deformed smash product algebras $\hb = \hb(A,V)$ encompass a very
large family of deformations considered in the literature, including
universal enveloping algebras,
skew group algebras,
Drinfeld orbifold algebras,
Drinfeld Hecke algebras,
symplectic reflection algebras,
rational Cherednik algebras,
degenerate affine Hecke algebras and graded Hecke algebras,
Weyl algebras,
infinitesimal Hecke algebras,
and many others. This is an area of research that is the focus of
tremendous recent activity; see \cites{CBH, Dr, EGG, EG, Kh, KT, LT, Lu,
Ti, Ts}, and subsequent follow-up works in the literature.\smallskip

\begin{remark}
In order to place the work in context, we briefly comment on how our
framework compares to other papers in the PBW literature.
The paper encompasses other works in two aspects: first, the algebra
$(A,\Delta)$ is strictly weaker than a bialgebra. Second, the deformation
parameters $\lambda, \kappa_V, \kappa_A$ can all be nonzero.
At the same time, we impose two restrictions that are present in some
papers but not in others: first, we work with $\Sym(V)$ and not a quantum
algebra, nor a general Koszul algebra (e.g., a PBW algebra). Second, for
ease of exposition we only consider algebras with $\im (\kappa_V)$ a
subset of $V$ instead of $V \otimes A$; this is akin to the assumption
$\lambda \equiv 0$ in \cites{SW1,WW1}, or $\kappa_V \equiv 0$ in
\cite{SW3}.
\end{remark}

Notice that the algebras $\hb$ are filtered, by assigning $\deg A = 0,
\deg V = 1$. We say that $\hb$ has the \textbf{PBW property} if the
surjection from $\scrh_{0,0} = \Sym(V) \rtimes A$ to the associated
graded algebra of $\hb$ is an isomorphism.
Equivalently, the PBW theorem holds for $\hb$ if for any (totally)
ordered $\bk$-basis $\{ x_i : i \in I \}$ of the free $\bk$-module $V$
and $\{ a \in J_1 \}$ of the $\bk$-free $\bk$-algebra $A$, the collection
\[
\{ X \cdot a : X \text{ is a word in the } x_i \text{ in non-decreasing
order of subscripts, } a \in J_1 \}
\]
is a $\bk$-basis of $\hb$. We now state the main result of the first part
of the paper, which is a PBW Theorem for the algebras $\hb$.

\begin{theorem}[PBW Theorem]\label{Tpbw}
Suppose $(A, \Delta)$ is a $\bk$-free cocommutative $\bk$-algebra, and
$V$ a $\bk$-free $A$-module. Define $\hb$ with $\kappa = \kappa_V \oplus
\kappa_A : V \wedge V \to V \oplus A$ as above, and suppose $A = \bk
\cdot 1 \bigoplus A'$ for a free $\bk$-submodule $A' \subset A$. Then the
following are equivalent:
\begin{enumerate}
\item $\hb$ has the PBW property (for a $\bk$-basis of $V$ and a
$\bk$-basis of $A$ containing $1$).

\item The natural map $: A \oplus (V \otimes A) \to \hb$ is an injection.

\item $\lambda : A \otimes V \to A$ and $\kappa : V \wedge V \to V \oplus
A$ satisfy the following conditions:
\begin{enumerate}
\item {\em $A$-action on $V$:}
For all $a,a' \in A$ and $v \in V$, the following equation holds in $A$:
\begin{equation}\label{Eaction}
\lambda(aa', v) = a \lambda(a',v) + \sum \lambda(a, \one{a'}(v))
\two{a'}.
\end{equation}

\item {\em $A$-compatibility of $\lambda,\kappa$:}
For all $a \in A$ and $v,v' \in V$, the following equations hold in $A$
and $V \otimes A$ respectively:
\begin{align}
a \kappa_A(v,v') - &\ \sum \kappa_A(\one{a}(v), \two{a}(v')) \three{a}
\label{Ecompat1}\\
= &\ \lambda(\lambda(a,v),v') - \lambda(\lambda(a,v'),v) - \lambda(a,
\kappa_V(v,v')), \notag\\
\sum \one{a}(\kappa_V(v,v')) \two{a} - &\ \sum \kappa_V(\one{a}(v),
\two{a}(v')) \three{a}\label{Ecompat2}\\
= &\ \sum \one{\lambda(a,v)}(v') \two{\lambda(a,v)}
- \sum \one{\lambda(a,v')}(v) \two{\lambda(a,v')} \notag\\
&\ + \sum \one{a}(v) \lambda(\two{a},v') - \sum \one{a}(v')
\lambda(\two{a},v).\notag
\end{align}

\item {\em Jacobi identities:}
For all $v_1, v_2, v_3 \in V$, the following cyclic sum vanishes:
\[
\sum_\circlearrowright [\kappa(v_1,v_2),v_3] := [\kappa(v_1, v_2), v_3] +
[\kappa(v_2, v_3), v_1] + [\kappa(v_3, v_1), v_2] = 0.
\]

\noindent More precisely, the following equations hold in $A$ and $V
\otimes A$ respectively (identifying $V$ with $V \otimes 1_A \subset V
\otimes A$):
\begin{align}
\sum_\circlearrowright \lambda(\kappa_A(v_1, v_2), v_3) = &\
\sum_\circlearrowright \kappa_A(v_1, \kappa_V(v_2,
v_3)),\label{Ejacobi1}\\
\sum_\circlearrowright \kappa_V(\kappa_V(v_1, v_2), v_3) = &\
\sum_\circlearrowright v_1 \kappa_A(v_2, v_3) -
\sum_\circlearrowright \one{\kappa_A(v_1, v_2)}(v_3) \two{\kappa_A(v_1,
v_2)}. \label{Ejacobi2}
\end{align}
\end{enumerate}
\end{enumerate}
\end{theorem}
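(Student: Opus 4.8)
The plan is to establish the cycle of implications $(1)\Rightarrow(2)\Rightarrow(3)\Rightarrow(1)$. The implication $(1)\Rightarrow(2)$ is immediate: among the elements of the asserted PBW basis of $\hb$ are those with $X$ empty (which span the image of $A$) and those with $X$ a single generator $x_i$ (which span the image of $V\otimes A$); linear independence of the full basis therefore forces the natural map $A\oplus(V\otimes A)\to\hb$ to be injective.

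For $(2)\Rightarrow(3)$, the strategy is to evaluate three short families of words inside $\hb$ along two different sequences of relations and then read off components. Using associativity in $\hb$ and the $\lambda$-relation, expand both sides of $(aa')v=a(a'v)$; coassociativity together with the $A$-module axiom on $V$ makes the ``$V\otimes A$-valued'' parts match, so the difference of the two expansions is an element of $A$ that maps to $0$ in $\hb$. By $(2)$ (injectivity of $A\hookrightarrow\hb$) it is zero, which is exactly \eqref{Eaction}. Next, expand both sides of $a(vv'-v'v)=a\,\kappa(v,v')$; here cocommutativity enters as the $S_3$-symmetry of the iterated coproduct, which is what permits re-sorting the mixed terms $\sum\one{a}(v)\,\two{a}(v')\,\three{a}$. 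The resulting identity exhibits an element of $A\oplus(V\otimes A)$ mapping to $0$ in $\hb$; by $(2)$ this element is zero, and reading off its $A$- and $(V\otimes A)$-components gives \eqref{Ecompat1} and \eqref{Ecompat2}. Finally, sorting the word $v_1v_2v_3$ along the two possible paths produces the cyclic sum $\sum_\circlearrowright[\kappa(v_1,v_2),v_3]$ as the discrepancy; its $A$- and $(V\otimes A)$-components, which vanish by $(2)$, are precisely \eqref{Ejacobi1} and \eqref{Ejacobi2}. The hypothesis $A=\bk\cdot1\oplus A'$ plays no role here; it only serves to normalize the chosen basis of $A$ in the statement of the PBW property.

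For $(3)\Rightarrow(1)$, the plan is to apply Bergman's Diamond Lemma to the free $\bk$-algebra on the generating set $\{x_i:i\in I\}$ (a $\bk$-basis of $V$) together with a $\bk$-basis $J_1$ of $A$, chosen to contain $1$ using $A=\bk\cdot1\oplus A'$. The reduction rules are: $e\cdot e'\mapsto ee'$ re-expanded in $J_1$, for $e,e'\in J_1$; $e\cdot x_i\mapsto\sum\one{e}(x_i)\,\two{e}+\lambda(e,x_i)$ re-expanded in the two bases; and $x_i\cdot x_j\mapsto x_j\cdot x_i+\kappa(x_i,x_j)$ whenever $i>j$. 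One first checks that this system is compatible with a well-founded semigroup partial order — for instance, order a monomial first by the number of occurrences of $V$-generators, then by the number of ``$A$-generator-before-$V$-generator'' inversions, then by the number of $V$-$V$ inversions, then by length — so that each reduction strictly lowers a monomial; this is routine, using only that $\bk$ is commutative and that $V,A$ are $\bk$-free. The irreducible monomials are then exactly the words $X\cdot e$ with $X$ a non-decreasing word in the $x_i$ and $e\in J_1$, i.e., the claimed PBW basis, so by the Diamond Lemma it remains to resolve the overlap ambiguities. There are precisely four families: $e\,e'\,e''$, resolved by associativity of multiplication in $A$; $e\,e'\,x_i$, resolved precisely by \eqref{Eaction} (via coassociativity and the $A$-module axioms on $V$); $e\,x_i\,x_j$ with $i>j$, resolved precisely by \eqref{Ecompat1}--\eqref{Ecompat2} (via cocommutativity, which also guarantees that $\wedge^2V$ is an $A$-submodule and that $a(v\wedge v')=\sum\one{a}(v)\wedge\two{a}(v')$); and $x_i\,x_j\,x_k$ with $i>j>k$, resolved precisely by \eqref{Ejacobi1}--\eqref{Ejacobi2}. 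This yields $(1)$.

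The main obstacle is the bookkeeping in the last step: carrying out the four ambiguity resolutions and verifying, term by term and Sweedler-leg by Sweedler-leg, that each collapses to exactly the displayed equation. The overlaps involving $\lambda$ and $\kappa_V$ are the delicate ones, requiring repeated use of coassociativity to line up the triple coproducts in \eqref{Ecompat1}, \eqref{Ecompat2}, \eqref{Ejacobi1}, \eqref{Ejacobi2}, and of cocommutativity to re-sort mixed products of the form $\sum\one{a}(v)\,\two{a}(v')\,\three{a}$. By contrast, the order-theoretic hypotheses of the Diamond Lemma and the implications $(1)\Rightarrow(2)$ and $(2)\Rightarrow(3)$ are comparatively mechanical. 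One could alternatively organize $(3)\Rightarrow(1)$ along the homological/Koszul lines of Braverman--Gaitsgory and Shepler--Witherspoon, but the Diamond Lemma route is self-contained and applies verbatim over the ground ring $\bk$.
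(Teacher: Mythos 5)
Your proposal is correct and follows essentially the same route as the paper: $(1)\Rightarrow(2)\Rightarrow(3)$ by reading off the $A$- and $(V\otimes A)$-components of the associativity discrepancies under injectivity, and $(3)\Rightarrow(1)$ via Bergman's Diamond Lemma with the same reduction system, the same four families of overlap ambiguities, and the same matching of each ambiguity to conditions \eqref{Eaction}, \eqref{Ecompat1}--\eqref{Ecompat2}, and \eqref{Ejacobi1}--\eqref{Ejacobi2}. The only cosmetic difference is your choice of well-founded compatible order (a lexicographic tuple of inversion counts rather than the paper's length-then-lex order plus a separate ``misordering index'' for the descending chain condition), which serves the identical purpose.
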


As observed by Shepler and Witherspoon in their papers
\cite{SW1}--\cite{SW4}, their versions of the PBW theorem, and therefore
ours, specialize to the PBW criteria for the algebras studied by
Drinfeld, Etingof--Ginzburg, Lusztig, as well as in numerous follow-up
papers on these families of algebras (see the remarks following
Definition \ref{Dhk} for additional references). Thus, Theorem \ref{Tpbw}
unifies several results in the literature and extends them to arbitrary
cocommutative algebras. As a specific example, we point the reader to
\cite{SW3}*{Theorem 3.1} for the analogous result with $\bk$ a field, $A$
a group algebra, and $\kappa_V \equiv 0$.

\begin{remark}\label{R23}
Notice that the conditions in part (3) of the theorem always hold in
$\hb$. In other words, Equations \eqref{Eaction}--\eqref{Ejacobi2} hold
in the image of the space $A \oplus (V \otimes A)$ in $\hb$, by
considering the equations corresponding to the associativity of the
algebra $\hb$:
\[
aa' \cdot v = a \cdot (a' \cdot v), \qquad
a \cdot (v v' - v' v) = a \cdot \kappa(v,v'), \qquad
\sum_\circlearrowright [\kappa(v_1,v_2),v_3] = 0.
\]
The assertion of Theorem \ref{Tpbw} is that the PBW property is
equivalent to these equations holding in $A \oplus (V \otimes A)$.
\end{remark}

\begin{remark}\label{Rdim2}
It is easy to verify that the Jacobi identities \eqref{Ejacobi1},
\eqref{Ejacobi2} hold in $A \oplus (V \otimes A)$ if $\dim_\bk V
\leqslant 2$, since in that case the left and right hand sides of both
equations vanish. If moreover $\dim_\bk V \leqslant 1$, then the
$A$-compatibility conditions \eqref{Ecompat1}, \eqref{Ecompat2} also hold
in $A \oplus (V \otimes A)$, since $\kappa_V, \kappa_A \equiv 0$.
\end{remark}

\subsection{Proof of the PBW Theorem}

We now prove Theorem \ref{Tpbw} using the Diamond Lemma \cite{Be}. As we
work with a general cocommutative algebra (which is strictly weaker than
a cocommutative bialgebra), and moreover, work with possibly nonzero
$\lambda, \kappa_V$, the proof is written out in some detail.
To prove Theorem \ref{Tpbw}, we will require the unit $1$ to be one of
our $\bk$-basis vectors for $A$; words involving this basis vector are to
be considered ``without'' the $1$.

\begin{proof}[Proof of the PBW Theorem \ref{Tpbw}]
Clearly, $(1) \implies (2)$, and $(2) \implies (3)$ using Remark
\ref{R23}. The goal in the remainder of this proof (and this section) is
to show that $(3) \implies (1)$. We begin by writing down the relations
in $\hb$ systematically. Recall that $A = \bk \cdot 1_A \oplus A'$; now
suppose $\{ a_j : j \in J \}$ is a $\bk$-basis of the $\bk$-submodule
$A'$. Write
\begin{equation}
J_1 := \{ a_j : j \in J \} \sqcup \{ 1_A \}, \quad a_0 := 1_A, \quad J_0
:= J \sqcup \{ 0 \}.
\end{equation}

\noindent We also fix a total ordering on $J_1$ and correspondingly on
$J_0$, with $0 \leqslant j$ for all $j \in J_0$.

Next, fix a totally ordered $\bk$-basis of $V$, denoted by $\{ x_i : i
\in I \}$. (Thus, $I$ is also totally ordered.) We then define various
structure constants, with the sums running over $J_0$ and $I$, and using
Einstein notation throughout. We first define the structure constants
from $A$ and its action on $V$:
\begin{equation}
a_j a_k = u_{jk}^l a_l, \qquad
a_j(x_k) = s_{jk}^h x_h, \qquad
\Delta(a_j) = r_j^{kl} a_k \otimes a_l.
\end{equation}

\noindent In particular, $u_{j0}^i = u_{0j}^i = \delta_{i,j}$, $s_{0k}^i
= \delta_{i,k}$, and $r_0^{kl} = \delta_{k,0} \delta_{l,0}$. Next, we
define the structure constants for the maps $\lambda, \kappa$:
\begin{equation}
\kappa_A(x_j,x_k) = v_{jk}^l a_l, \qquad \kappa_V(x_j, x_k) = w_{jk}^l
x_l, \qquad (j>k); \qquad \lambda(a_j,x_k) = q_{jk}^l a_l.
\end{equation}

\noindent It now follows that $\hb$ is a quotient of $T(V \oplus A)$,
with the defining relations:
\begin{align}\label{Ealgebra}
x_j x_k = &\ x_k x_j + v_{jk}^l a_l + w_{jk}^h x_h \quad (j > k),
\notag\\
a_j a_k = &\ u_{jk}^l a_l,\\
a_j x_k = &\ t_{jk}^{mn} x_n a_m + q_{jk}^l a_l, \quad \text{where }
t_{jk}^{mn} = r_j^{lm} s_{lk}^n.\notag
\end{align}

\noindent Thus, the $q,r,s,t,u,v,w$ are all structure constants in $\bk$,
for all choices of indices.

To show (1), we first write down additional consequences of the structure
of $A,V$. The following equations encode the associativity,
coassociativity, and cocommutativity of $A$:
\begin{align}
u_{jm}^l u_{il}^n = &\ u_{ij}^l u_{lm}^n \; \quad \forall i,j,m,n;
\notag\\
r_i^{jl} r_l^{mn} = &\ r_i^{kn} r_k^{jm} \quad \forall i,j,m,n;\\
r_j^{kl} = &\ r_j^{lk} \qquad \quad \forall j,k,l \in J_0 = J \sqcup \{
0 \}. \notag
\end{align}

\noindent The next condition is that $\Delta$ is multiplicative, which
yields:
\begin{align*}
u_{jk}^l r_l^{mn} (a_m \otimes a_n) = &\ u_{jk}^l \Delta(a_l) =
\Delta(a_j a_k) = \Delta(a_j) \Delta(a_k)\\
= &\ r_j^{cd} r_k^{ef} (a_c \otimes a_d) (a_e \otimes a_f) = r_j^{cd}
r_k^{ef} u_{ce}^m u_{df}^n (a_m \otimes a_n).
\end{align*}

\noindent Equating coefficients in $A \otimes A$, we conclude that
\begin{equation}\label{Ecoprod}
u_{jk}^l r_l^{mn}  = r_j^{cd} r_k^{ef} u_{ce}^m u_{df}^n.
\end{equation}

\noindent Finally, $V$ is an $A$-module, which yields:
\[
s_{jm}^n s_{ki}^m x_n = a_j \left( s_{ki}^m x_m \right) = a_j(a_k(x_i)) =
(a_j a_k)(x_i) = u_{jk}^m a_m(x_i) = u_{jk}^m s_{mi}^n x_n,
\]

\noindent whereby we get
\begin{equation}\label{Emodule}
s_{jm}^n s_{ki}^m = u_{jk}^m s_{mi}^n.
\end{equation}

We now proceed with the proof, using the terminology of \cite{Be}. The
\textit{reduction system} $S$ consists of the set of algebra relations
\eqref{Ealgebra}.
Then expressions in the left and right hand sides in the equations in
\eqref{Ealgebra} are what Bergman calls $f_\sigma$ and $W_\sigma$,
respectively.

Define $X := \{ a_j : j \in J \} \cup \{ x_i : i \in I \}$. Then the
expressions in the free semigroup $\tangle{X}$ generated by $X$ that are
irreducible (i.e., cannot be reduced via the operations $f_\sigma \mapsto
W_\sigma$ via the Equations \eqref{Ealgebra}) are precisely the PBW-basis
that was claimed earlier, i.e. words $x_{i_1} \cdots x_{i_l} \cdot a_j$,
for $j \in J_0$ and $i_1 \leqslant i_2 \leqslant \dots \leqslant i_l$,
all in $I$. This also includes the trivial word $1$.

Next, define a \textit{semigroup partial ordering} $\leqslant$ on $X$,
first on its generators via:
\begin{equation}
1 < x_i < a_j, \quad \forall j \in J,\ i \in I,
\end{equation}

\noindent and then extend to a total order on $\tangle{X}$, as follows:
words of length $m$ are strictly smaller than words of length $n$,
whenever $m<n$; and
words of equal lengths are (totally) ordered lexicographically.
It is easy to see that $\leqslant$ is a semigroup partial order on
$\tangle{X}$, i.e., if $a \leqslant b$ then $waw' \leqslant wbw'$ for all
$w,w' \in \tangle{X}$. Moreover, $\leqslant$ is indeed compatible with
$S$, in that each $f_\sigma$ reduces to a linear combination $W_\sigma$
of monomials strictly smaller than $f_\sigma$.

We now recall the \textit{descending chain condition}, which says that
given a monomial $B \in \tangle{X}$, any sequence of reductions applied
to $B$ yields an expression that is irreducible in finitely many steps.
Now the following result holds.

\begin{lemma}\label{Ldiamond}
The semigroup partial order $\leqslant$ on $\tangle{X}$ satisfies the
descending chain condition.
\end{lemma}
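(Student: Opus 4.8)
The plan is to exhibit a monovariant: a function from monomials in $\tangle{X}$ to a well-ordered set that strictly decreases under each reduction $f_\sigma \mapsto W_\sigma$. The key observation is that among the three families of relations in \eqref{Ealgebra}, two of them ($x_j x_k$ for $j>k$, and $a_j x_k$) preserve the length of a monomial, while the relation $a_j a_k = u_{jk}^l a_l$ strictly decreases length (replacing a length-$2$ submonomial by a length-$\leqslant 1$ combination). So length alone is a monovariant for the third relation but not the first two. For the length-preserving reductions I need a secondary statistic. For the swap $x_j x_k \rightsquigarrow x_k x_j$ (plus lower-order terms of strictly shorter length, which are handled by the length drop), the natural choice is the number of \emph{inversions} among the $x$-subscripts, i.e., pairs of positions holding $x$-letters that are out of non-decreasing order; this drops by one. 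For the relation $a_j x_k \rightsquigarrow t_{jk}^{mn} x_n a_m + q_{jk}^l a_l$, the first term has the same length but moves an $a$-letter one position to the right past an $x$-letter, so the statistic ``sum over $a$-letters of the number of $x$-letters to their left'' strictly increases — hence I should use its complement or, more cleanly, the number of pairs (position of an $a$-letter, position of an $x$-letter to its right), which strictly decreases.

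Concretely, I would assign to each monomial $B \in \tangle{X}$ the triple
\[
\Phi(B) := \bigl(\ \ell(B),\ \ p(B),\ \mathrm{inv}(B)\ \bigr) \in \mathbb{Z}_{\geqslant 0}^3,
\]
where $\ell(B)$ is the length of $B$, $p(B)$ is the number of pairs of positions $(s,t)$ with $s<t$ such that position $s$ holds some $a_j$ and position $t$ holds some $x_i$, and $\mathrm{inv}(B)$ is the number of pairs of positions $(s,t)$ with $s<t$ both holding $x$-letters, say $x_i$ and $x_{i'}$ with $i>i'$. Order $\mathbb{Z}_{\geqslant 0}^3$ lexicographically; this is a well-order. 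I would then check, relation by relation, that applying a reduction inside any monomial $B = B' \cdot (\text{LHS}) \cdot B''$ produces terms each of which has strictly smaller $\Phi$-value than $B$: the $a_ja_k$-relation drops $\ell$; the $a_jx_k$-relation keeps $\ell$ fixed, and its $x_n a_m$ term drops $p$ by exactly one (the inverted $(a,x)$ pair at those two adjacent positions is removed and no new such pair with those letters is created, while pairs involving letters of $B', B''$ are unchanged) whereas its $a_l$ term drops $\ell$; the $x_jx_k$-relation ($j>k$) keeps $\ell$ and $p$ fixed (swapping two $x$-letters changes no $(a,x)$-pair count) and its leading term $x_kx_j$ drops $\mathrm{inv}$ by one, while its $a_l$ and $x_h$ correction terms have strictly smaller length. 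Since every reduction strictly decreases $\Phi$ and the target is well-ordered, no infinite chain of reductions exists, which is precisely the descending chain condition.

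The main obstacle — really the only point requiring care — is verifying that the secondary and tertiary statistics $p$ and $\mathrm{inv}$ behave monotonically under a reduction performed at an \emph{arbitrary} position of a monomial, not just at the left end: one must confirm that replacing a short subword by the appropriate combination does not inadvertently create new inverted $(a,x)$-pairs or new $x$-inversions involving the untouched letters of $B'$ and $B''$. This is true because each reduction only rearranges or deletes letters \emph{within} the matched subword and never moves a letter of $B'$ or $B''$ relative to a letter outside the subword, so all pair-counts straddling the boundary are preserved; only the pairs internal to the matched subword change, and there the stated drops hold by direct inspection of the three cases. I would also note, as is standard, that the correction terms of strictly shorter length automatically satisfy the inequality regardless of how their $p$ and $\mathrm{inv}$ compare, because $\ell$ is the first coordinate of the lexicographic order.
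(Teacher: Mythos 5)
Your proof is correct and follows essentially the same strategy as the paper's: both arguments exhibit a statistic built from the count of $x$-inversions, the count of $(a,x)$-misordered pairs, and length data, and verify that every term produced by every reduction strictly decreases it. The only difference is packaging --- you order the triple $(\ell, p, \mathrm{inv})$ lexicographically (a well-order, so termination follows), whereas the paper combines the same ingredients into the single integer $o + p + pr + q + r^3$ in order to extract an explicit upper bound on the number of reductions applicable to a given monomial; both versions suffice for the descending chain condition.
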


\begin{proof}
We prove a stronger assertion; namely, we produce an explicit upper bound
for the number of reductions successively applicable on a monomial. Given
a word $w = T_1 \cdots T_n$, with $T_i \in X\ \forall i$, define its
\textit{misordering index} $mis(w)$ to be $o + p + pr + q + r^3$, where
\begin{align*}
o = o(w) := &\ \# \{ (i,j) : i<j,\ T_i, T_j \in V,\ T_i > T_j \},\\
p = p(w) := &\ \# \{ (i,j) : i<j,\ T_i \in A',\ T_j \in V \},\\
q = q(w) := &\ \# \{ i : T_i \in A' \},\\
r = r(w) := &\ \# \{ i : T_i \in V \} = n-q.
\end{align*}

\noindent We now claim that each reduction strictly reduces the
misordering index of each resulting monomial; this claim shows the
result.
As an illustration of the claim, we present the most involved case: when
$f_\sigma = x_j x_k$ with $j>k$, and the monomial we consider via the
reduction $f_\sigma \mapsto W_\sigma$ corresponds to $a_l$ for some $l
\in J$. For this new word $w'$, notice that $q$ increases by 1, whereas
$r$ reduces by 2 (so $r \geqslant 2$), $o$ reduces by at least 1, and $p$
may increase by at most the number of $x$ to the right of the new $a$,
which is at most $r-2$. So, $o+q$ does not increase, and we now claim
that $p + pr + r^3$ strictly reduces.
Indeed, $p' \leqslant p+r-2,\ r' \leqslant r-2$, whence:
\begin{align*}
p'(1+r') + (r')^3 \leqslant &\ (p+r-2)(1+(r-2)) + (r-2)^3\\
\leqslant &\ p(1+r) + (r-2) + (r-2)^2 + (r-2)^3\\
= &\ p(1+r) + (r-2)(r^2 - 3r + 3) < p(1+r) + r \cdot r^2.
\end{align*}

\noindent Hence $mis(w') < mis(w)$ as desired.
\end{proof}

The final item utilized in the proof of the PBW theorem, is the notion of
ambiguities. It is clear that no $f_\sigma$ is a subset of $f_\tau$ for
some $\sigma,\tau \in S$; hence there are no \textit{inclusion
ambiguities}. In light of Lemma \ref{Ldiamond} and the Diamond Lemma
\cite{Be}*{Theorem 1.2}, it suffices to resolve all \textit{overlap
ambiguities} using the given conditions in (3). We begin by writing down
these conditions explicitly using the structure constants in $A$.
Explicit computations using these constants and Equations
\eqref{Eaction}--\eqref{Ejacobi2} yield the following five equations,
respectively:
\begin{align}
u_{jk}^l q_{li}^h = &\
q_{ki}^l u_{jl}^h + t_{ki}^{mn} q_{jn}^l u_{lm}^h, \label{Epbw1}\\
v_{jk}^l u_{il}^h - t_{ij}^{mn} t_{mk}^{cd} v_{nc}^l u_{ld}^h = &\
q_{ij}^l q_{lk}^h - q_{ik}^l q_{lj}^h - w_{jk}^l q_{il}^h,\\
w_{jk}^l t_{il}^{dc} - t_{ij}^{mn} t_{mk}^{dl} w_{nl}^c =&\
q_{ij}^m t_{mk}^{dc} - q_{ik}^m t_{mj}^{dc}
+ t_{ij}^{mc} q_{mk}^d - t_{ik}^{mc} q_{mj}^d,\\
\sum_{\circlearrowright (i,j,k)} v_{ij}^l q_{lk}^h = &\
\sum_{\circlearrowright (i,j,k)} w_{jk}^m v_{im}^h,\\
\sum_{\circlearrowright (i,j,k)} w_{ij}^l w_{lk}^h \cdot (x_h \otimes
a_0) =&
\sum_{\circlearrowright (i,j,k)} v_{jk}^m (x_i \otimes a_m) -
\sum_{\circlearrowright (i,j,k)} v_{ij}^l t_{lk}^{dc} \cdot (x_c \otimes
a_d).
\end{align}

We now resolve the overlap ambiguities, which are of four types, and
correspond to the associativity of the algebra $\hb$ (see Remark
\ref{R23}):
\[
a_i a_j a_k,\ a_j a_k x_i,\ a_k x_i x_j (i>j),\ x_i x_j x_k (i>j>k).
\]

\noindent Notice that the first type is resolvable because $A$ is an
associative algebra. We only analyse the second type of ambiguity in what
follows; the others involve carrying out similar (and more longwinded)
computations, that use the structure constants of the cocommutative algebra
$A$ with coproduct.

To resolve the ambiguity $a_j a_k x_i$, using the above analysis in this
proof we compute:
\[
(a_j a_k) x_i = u_{jk}^l a_l x_i = u_{jk}^l t_{li}^{mh} x_h a_m +
u_{jk}^l q_{li}^h a_h = u_{jk}^l r_l^{mc} s_{ci}^h \cdot x_h a_m +
u_{jk}^l q_{li}^h a_h.
\]

\noindent On the other hand,
\begin{align*}
a_j (a_k x_i) = t_{ki}^{fg} a_j x_g a_f + q_{ki}^l a_j a_l
= &\ t_{ki}^{fg} t_{jg}^{yh} x_h a_y a_f + t_{ki}^{fg} q_{jg}^l a_l a_f +
q_{ki}^l u_{jl}^h a_h\\
= &\ t_{ki}^{fg} t_{jg}^{yh} u_{yf}^l x_h a_l
+ t_{ki}^{mn} q_{jn}^l u_{lm}^h a_h + q_{ki}^l u_{jl}^h a_h.
\end{align*}

\noindent The overlap ambiguity is resolved if these two expressions are
shown to be equal. In light of \eqref{Epbw1}, it suffices to show that,
after relabelling indices, we have for all $i,j,k,l,h$ (or $h$-$l$):
\[
u_{jk}^m r_m^{lf} s_{fi}^h = t_{ki}^{fg} t_{jg}^{yh} u_{yf}^l.
\]

\noindent To see why this holds, begin with the right-hand side, expand
using the definition of $t$, and then use Equations \eqref{Ecoprod},
\eqref{Emodule} above:
\begin{align*}
t_{ki}^{fg} t_{jg}^{yh} u_{yf}^l = &\ r_k^{fa} s_{ai}^g \cdot r_j^{yn}
s_{ng}^h \cdot u_{yf}^l = r_k^{fa} r_j^{yn} u_{yf}^l \cdot (s_{ng}^h
s_{ai}^g)\\
= &\ r_k^{fa} r_j^{yn} u_{yf}^l (u_{na}^g s_{gi}^h) = r_j^{yn} r_k^{fa}
u_{yf}^l u_{na}^g \cdot s_{gi}^h\\
= &\ u_{jk}^m r_m^{lg} \cdot s_{gi}^h,
\end{align*}

\noindent which is precisely the left-hand side. Thus the ambiguity is
resolved.
\end{proof}

\section{Characterization via deformation theory}\label{S3}

We now explain how PBW deformations can be naturally understood via
deformation theory. In this section, suppose $\bk$ is a field. Given an
associative algebra $B$ and an indeterminate $t$, a \textit{deformation
of $B$ over $\bk[t]$} is an associative $\bk[t]$-algebra $(B_t,*)$ that
is isomorphic to $B[t]$ as a vector space, such that $B_t / t B_t$ is
isomorphic to $B$ as a $\bk$-algebra. In particular, we can write the
multiplication of two elements $b_1,b_2 \in B \otimes t^0 \subset B_t$
as:
\[
b_1 * b_2 = b_1 b_2 + \sum_{j > 0} \mu_j(b_1, b_2) t^j,
\]

\noindent where $\mu_j : B \otimes B \to B$ is $\bk$-linear and only
finitely many terms are nonzero in the above sum.

If moreover $B$ is $\Z^{\geqslant 0}$-graded, then a \textit{graded
$\bk[t]$-deformation of $B$} is a deformation of $B$ over $\bk[t]$ that
is graded with $\deg t = 1$, i.e., each $\mu_j : B \otimes B \to B$ is
homogeneous of degree $-j$. The map $\mu_j$ is also called the
\textit{$j$th multiplication map}.

Henceforth in this section we will consider the special case of the
$\Z^{\geqslant 0}$-graded algebra $B := \scrh_{0,0} = \Sym(V) \rtimes A$,
with $(A, \Delta)$ a cocommutative algebra as above. Our first goal in
this section is to show that the PBW property for the algebras $\hb$ has
a natural reformulation in terms of graded deformations of $\scrh_{0,0}$
over $\bk[t]$. Such a result was shown in \cite{SW3}*{\S 6} in the
special case of $A$ a group algebra, and further assuming that $\kappa_V
\equiv 0$. We now explain how the assumption $\kappa_V \equiv 0$ is
related to that in \textit{loc.~cit.} of requiring $V \otimes V \subset
\ker \mu_1$, by extending the result to general $\kappa_V : V \wedge V
\to V$ and all cocommutative algebras $A$.

\begin{theorem}\label{Tdeform}
Suppose $\bk$ is a field (of arbitrary characteristic), $(A,\Delta)$ is
cocommutative, and $V$ an $A$-module. Consider the following two
statements.
\begin{enumerate}
\item $\hb$ satisfies the PBW Theorem \ref{Tpbw}.

\item There exists a graded $\bk[t]$-deformation $B_t$ of $B :=
\scrh_{0,0} = \Sym(V) \rtimes A$, whose multiplication maps $\mu_1,
\mu_2$ satisfy (for all $v,v' \in V$ and $a \in A$):
\begin{align}\label{Edeform}
\lambda(a, v) = &\ \mu_1(a \otimes v) - \sum \mu_1(\one{a}(v) \otimes
\two{a}),\notag\\
\kappa_V(v, v') = &\ \mu_1(v \otimes v') - \mu_1(v' \otimes v),\\
\kappa_A(v, v') = &\ \mu_2(v \otimes v') - \mu_2(v' \otimes v).\notag
\end{align}
\end{enumerate}

\noindent Then $(1) \implies (2)$, and the converse holds if $\dim A,
\dim V$ are both finite. Moreover, if these statements hold then $\hb
\cong B_t|_{t=1}$.
\end{theorem}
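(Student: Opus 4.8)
The plan is to establish the two implications separately, using a Rees-type rescaling $\hb_t$ for the forward direction and specialization at $t=1$ for the converse, and then to read off the ``moreover'' clause from the constructions.

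\textbf{$(1)\Rightarrow(2)$.} Assuming the PBW property, I would form the homogenized $\bk[t]$-algebra $\hb_t$ on generators $V\oplus A$, with the relations of $A$ and the rescaled relations $av-\sum\one a(v)\two a=t\,\lambda(a,v)$ and $vv'-v'v=t\,\kappa_V(v,v')+t^2\,\kappa_A(v,v')$, which are homogeneous for $\deg t=1$, $\deg V=1$, $\deg A=0$. Running the Diamond Lemma of the proof of Theorem~\ref{Tpbw} over the ground ring $\bk[t]$ — with the same reduction system, ordering and misordering index — the overlap ambiguities resolve because, by the computations in that proof, their resolution is governed precisely by the (now $t$-homogeneous) Equations~\eqref{Eaction}--\eqref{Ejacobi2} together with the $t$-free identities expressing the associativity, coassociativity and module structure of $A,V$. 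Hence $\hb_t$ is $\bk[t]$-free on the PBW monomials; it is $\Z^{\geqslant 0}$-graded, isomorphic to $\scrh_{0,0}[t]$ as a graded $\bk$-module, with $\hb_t/t\hb_t\cong\scrh_{0,0}$ (set $t=0$) and $\hb_t/(t-1)\hb_t\cong\hb$ (set $t=1$). Thus $B_t:=\hb_t$ is a graded $\bk[t]$-deformation of $\scrh_{0,0}$, and $B_t|_{t=1}=\hb$. Writing $b_1* b_2=\sum_{j\geqslant 0}\mu_j(b_1,b_2)t^j$ with $\mu_0$ the multiplication of $\scrh_{0,0}$, and using that $\mu_j(a\otimes v)=0$ for $j\geqslant 2$ and $\mu_j(v\otimes v')=0$ for $j\geqslant 3$ on degree grounds, one expands products of PBW monomials to read off $\mu_1(a\otimes v)=\lambda(a,v)$, $\mu_1(V\otimes A)=0$, $\mu_1(v\otimes v')-\mu_1(v'\otimes v)=\kappa_V(v,v')$, $\mu_2(v\otimes v')-\mu_2(v'\otimes v)=\kappa_A(v,v')$, which gives \eqref{Edeform}.

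\textbf{$(2)\Rightarrow(1)$.} Now suppose $\dim A,\dim V<\infty$ and let $B_t$ be a graded $\bk[t]$-deformation of $B:=\scrh_{0,0}$ whose $\mu_1,\mu_2$ satisfy \eqref{Edeform}. I would first invoke the standard fact that the specialization $B_t|_{t=1}:=B_t/(t-1)B_t$ is a filtered algebra (filtration induced by the grading with $\deg t=1$) whose associated graded recovers $B_t/tB_t\cong\scrh_{0,0}$; so $B_t|_{t=1}$ already has the PBW property relative to $\scrh_{0,0}$. Using that $\mu_0$ is the multiplication of $\scrh_{0,0}$ — so $\mu_0(a\otimes v)=\sum\one a(v)\two a$ and $\mu_0$ is symmetric on $V$ — and that the higher $\mu_j$ vanish for degree reasons, one computes in $B_t$ that $a* v-\sum\one a(v)* \two a=t\bigl(\mu_1(a\otimes v)-\sum\mu_1(\one a(v)\otimes\two a)\bigr)=t\,\lambda(a,v)$, that $v* v'-v'* v=t\,\kappa_V(v,v')+t^2\,\kappa_A(v,v')$, and $a* a'=aa'$; at $t=1$ these are exactly the defining relations of $\hb$. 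Hence there is a filtered algebra surjection $\phi\colon\hb\to B_t|_{t=1}$ with $\phi(v)=\bar v$, $\phi(a)=\bar a$. Composing $\mathrm{gr}\,\phi$ with the canonical surjection $\pi\colon\scrh_{0,0}\to\mathrm{gr}\,\hb$ yields a map that is the identity on the generators $V\oplus A$, hence the identity of $\scrh_{0,0}$; therefore $\pi$ is injective — the PBW property for $\hb$ — and then $\mathrm{gr}\,\phi=\pi^{-1}$ is bijective, so the filtered surjection $\phi$ is an isomorphism $\hb\cong B_t|_{t=1}$.

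\textbf{Where the difficulty lies.} The homological and Diamond-Lemma scaffolding is routine; the delicate part is the bookkeeping that matches the low-order multiplication maps of a deformation with the structure data $\lambda,\kappa_V,\kappa_A$. One must keep careful track of the smash-product normal form of $\scrh_{0,0}$ (so that $\mu_0(a\otimes v)=\sum\one a(v)\two a$ and $\mu_0$ is commutative on $V$), of exactly which $\mu_j$ survive, and of the Sweedler sum $\sum\mu_1(\one a(v)\otimes\two a)$ appearing in \eqref{Edeform} — which need not vanish for a general $B_t$, although it does for the canonical choice $B_t=\hb_t$. The finiteness of $\dim A,\dim V$ is used only in the converse, at the step comparing $B_t|_{t=1}$ with $\scrh_{0,0}$ at the level of graded components. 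By contrast, the homogeneity of Equations~\eqref{Eaction}--\eqref{Ejacobi2}, which lets the argument of Theorem~\ref{Tpbw} run verbatim over $\bk[t]$, makes the forward implication essentially formal once that theorem is in hand.
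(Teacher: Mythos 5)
Your proof is correct, and the forward implication follows the paper's route exactly: the paper also forms the homogenized algebra $B_t$ with relations $a*v=\sum\one{a}(v)*\two{a}+\lambda(a,v)t$ and $v*v'-v'*v=\kappa_V(v,v')t+\kappa_A(v,v')t^2$, deduces $B_t\cong\scrh_{0,0}\otimes_\bk\bk[t]$ from the PBW property, and equates coefficients of $t,t^2$ to get \eqref{Edeform}. (Your extra observation that Equations \eqref{Eaction}--\eqref{Ejacobi2} are $t$-homogeneous, so the Diamond Lemma argument of Theorem \ref{Tpbw} runs verbatim over $\bk[t]$, is a clean way to justify the freeness claim that the paper states more tersely.)

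The converse is where you genuinely diverge. The paper works at the homogenized level: it builds $F_t=T_{\bk[t]}(V\oplus A)/(a\cdot a'-aa')$, shows the deformed relations lie in $\ker(F_t\to B_t)$ to get a graded surjection $\scrh_{\lambda,\kappa,t}\twoheadrightarrow B_t$, and then compares the dimensions of the graded components — this is precisely where $\dim A,\dim V<\infty$ enters — before specializing to $t=1$. You instead specialize first, use the standard fact $\mathrm{gr}(B_t|_{t=1})\cong B_t/tB_t\cong\scrh_{0,0}$, produce the filtered surjection $\phi\colon\hb\to B_t|_{t=1}$, and close the loop with the section argument $\mathrm{gr}\,\phi\circ\pi=\id_{\scrh_{0,0}}$, which forces $\pi$ injective. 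This is a legitimate and arguably cleaner argument; it buys you something the paper's does not, namely it nowhere uses finite-dimensionality (your own closing remark that finiteness is needed ``at the step comparing graded components'' describes the paper's dimension count, not your argument, which replaces that count by the one-sided-inverse trick). The one point you should make explicit is that $\phi$ lands surjectively onto $B_t|_{t=1}$, i.e.\ that $B_t$ is generated over $\bk[t]$ by $A\cup V$; this follows by induction on degree from $b_1*b_2=b_1b_2+O(t)$, and is needed only for the ``moreover'' clause $\hb\cong B_t|_{t=1}$, not for the PBW property itself.
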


\noindent Thus, the structure maps $\lambda, \kappa_V, \kappa_A$ in $\hb$
can be naturally reformulated using the multiplication maps $\mu_1,
\mu_2$ in a graded deformation of $\scrh_{0,0}$, whenever $\hb$ has the
PBW property.

\begin{proof}
We provide a sketch of the proof as it closely resembles the arguments
for proving \cite{SW3}*{Proposition 6.5 and Theorem 6.11}. First suppose
(1) holds. Define $(B_t, *)$ to be the associative algebra over $\bk[t]$
generated by $A,V$, with the following relations (for all $a \in A, v,v'
\in V$):
\begin{align*}
a * v = &\ \sum \one{a}(v) * \two{a} + \lambda(a,v) t,\\
v * v' - v' * v = &\ \kappa_V(v,v') t + \kappa_A(v,v') t^2.
\end{align*}

\noindent This yields a $\Z^{\geqslant 0}$-graded algebra with $\deg(t) =
\deg(V) = 1$ and $\deg(A) = 0$. Moreover, $B_t \cong \scrh_{0,0}
\otimes_\bk \bk[t]$ as vector spaces, since $\hb$ has the PBW property.
Now verify using the definitions and the relations in the algebra $(B_t,
*)$, that
\[
\kappa_V(v,v') t + \kappa_A(v,v') t^2 = v * v' - v' * v = vv' + \sum_{j >
0} \mu_j(v \otimes v') t^j - v'v - \sum_{j > 0} \mu_j(v' \otimes v) t^j.
\]

\noindent As this is an equality of polynomials in $\scrh_{0,0}[t]$, we
equate the linear and quadratic terms in $t$ on both sides, to obtain the
last two equations in \eqref{Edeform}.
The first equation in \eqref{Edeform} follows from a similar computation. 
This shows (2), and moreover, $B_t|_{t=1} \cong \hb$.

Conversely, suppose (2) holds, and $\dim V, \dim A < \infty$. Define $F_t
:= T_{\bk[t]}(V \oplus A) / (a \cdot a' - aa')$; then we have an algebra
map $f : F_t \to B_t$, which sends monomials $x_1 \cdots x_k$ (with each
$x_i \in V \oplus A$) to $x_1 * \cdots * x_k$. One shows as in \cite{SW3}
that $f$ is surjective, and the vectors
\[
a v - \sum \one{a}(v) \two{a} - \lambda(a,v) t = a v - \sum \one{a}(v)
\two{a} - \mu_1(a,v) t + \sum \mu_1(\one{a}(v) \otimes \two{a}) t
\]
and
\begin{align*}
&\ v v' - v' v - \kappa_V(v, v') t - \kappa_A(v, v') t^2\\
= &\ v v' - v' v - \mu_1(v, v') t + \mu_1(v', v) t - \mu_2(v, v') t^2 +
\mu_2(v', v) t^2
\end{align*}

\noindent lie in $\ker(f)$. We use here that $a * v = av + \mu_1(a
\otimes v) t$ and $v * v' = vv' + \mu_1(v \otimes v') t + \mu_2(v \otimes
v') t^2$, since $\deg \mu_j = -j$ for all $j>0$.

This analysis implies that $\scrh_{\lambda,\kappa,t} \twoheadrightarrow
B_t$ as $\Z^{\geqslant 0}$-graded $\bk$-algebras, where
$\scrh_{\lambda,\kappa,t}$ is the quotient of $F_t$ by the relations
\[
a v - \sum \one{a}(v) \two{a} - \lambda(a,v) t, \qquad
v v' - v' v - \kappa_V(v,v') t - \kappa_A(v,v') t^2.
\]

\noindent Now using that $A,V$ are finite-dimensional, verify that the
graded components of the two algebras satisfy: $\deg \scrh_{\lambda,
\kappa, t}[m] \leqslant \deg B_t[m]$. Hence the dimensions agree for each
$m$, whence $\scrh_{\lambda,\kappa,t} \cong B_t$. It follows that $\hb =
\scrh_{\lambda,\kappa,t} |_{t=1} \cong B_t |_{t=1}$ as filtered algebras.
Now as explained at the end of the proof of \cite{SW3}*{Theorem 6.11},
$\hb$ has the PBW property.
\end{proof}

\section{The case of bialgebras and Hopf algebras}\label{S4}

In this section we study a special case of the general framework above,
but now requiring that $A$ is a cocommutative bialgebra (with counit
$\vi$), or Hopf algebra (with counit $\vi$ and antipode $S$).
This is indeed the case in a large number of prominent and well-studied
examples in the literature, as discussed after Definition \ref{Dhk}.

We begin by observing that the cocommutative algebra structure on $A$
automatically extends to $\scrh_{0,0} = \Sym(V) \rtimes A$, setting
$\Delta(v) = v \otimes 1 + 1 \otimes v$ for all $v \in  V$.
Akin to the usual Hopf-theoretic setting, we now introduce the following
notation.

\begin{definition}
Given a cocommutative algebra $(A,\Delta)$, an element $a \in A$ is said
to be \textit{primitive} (respectively, \textit{grouplike}), if
$\Delta(a) = 1 \otimes a + a \otimes 1$ (respectively, $\Delta(a) = a
\otimes a$).
\end{definition}

We now observe that it is possible to classify when the deformed algebra
$\hb$ is a cocommutative algebra, a bialgebra, or a Hopf algebra, under
the assumption that $A$ has the same structure and $V$ is primitive.

\begin{prop}
$(A,\Delta)$ and $V$ as above. Fix $\lambda : A \otimes V \to A$ and
$\kappa = \kappa_A \oplus \kappa_V : V \wedge V \to A \oplus V$ as above.
\begin{enumerate}
\item Then $\hb$ is a cocommutative algebra with (the image of) $V$
primitive, if
\begin{equation}\label{Ehopf1}
\Delta(\lambda(a,v)) = \sum \lambda(\one{a},v) \otimes \two{a} + \sum
\one{a} \otimes \lambda(\two{a}, v), \qquad
\kappa_A(v,v') \text{ is primitive},
\end{equation}
for all $v,v' \in V,\ a \in A$.
The converse is true if $\hb$ has the PBW property.

\item Suppose $A$ is a cocommutative bialgebra (with counit $\vi$). Then
$\hb$ is a cocommutative bialgebra with $V$ primitive, if \eqref{Ehopf1}
holds and $\im \lambda \subset \ker \vi$. The converse is true if $\hb$
has the PBW property.

\item Suppose $A$ is a cocommutative Hopf algebra (with counit $\vi$ and
antipode $S$). Then $\hb$ is a cocommutative Hopf algebra with $V$
primitive, if \eqref{Ehopf1} holds and moreover,
\[
\im \lambda \subset \ker \vi, \qquad
S(\lambda(a,v)) = \sum \lambda( S(\one{a}), \two{a}(v) ).
\]
The converse is true if $\hb$ has the PBW property.
\end{enumerate}
In particular, notice that in all three cases, the structure on $A$
automatically extends to $\scrh_{0,0} = \Sym(V) \rtimes A$, and more
generally, to all $\scrh_{0,\kappa}$ for which $\im \kappa_A$ is
primitive.
\end{prop}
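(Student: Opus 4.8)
The plan is to handle all three parts by a single mechanism. Since $\hb$ is generated by the images of $V$ and $A$ subject only to the multiplication of $A$ together with the $\lambda$- and $\kappa$-relations of Definition~\ref{Dhk}, any coproduct (resp.\ counit, antipode) on $\hb$ that restricts to the one on $A$ and makes $V$ primitive is forced on the generators, hence unique; the content is, for the ``if'' directions, to verify that the candidate structure map --- defined on the tensor algebra $T(V\oplus A)$ --- annihilates the defining ideal, and, for the converses, to read the asserted identities off the defining relations, using the PBW property to detect equalities inside $\hb$ at the level of $A$ and $A\otimes A$.

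For part~(1) I would extend $\Delta$ to the algebra map $T(V\oplus A)\to\hb\otimes\hb$ sending $a\mapsto\Delta(a)$ and $v\mapsto v\otimes 1+1\otimes v$. It kills $aa'-(\text{product in }A)$ because $\Delta$ is an algebra map. Applying it to $av-\sum\one{a}(v)\two{a}-\lambda(a,v)$, expanding the $\lambda$-relation in each tensor slot, and simplifying with coassociativity together with the $S_3$-symmetry of the iterated coproduct that cocommutativity provides, the image collapses to $\sum\lambda(\one{a},v)\otimes\two{a}+\sum\one{a}\otimes\lambda(\two{a},v)-\Delta(\lambda(a,v))$; applying it to $[v,v']-\kappa_V(v,v')-\kappa_A(v,v')$ and using that $V$ is primitive, the image collapses to $\kappa_A(v,v')\otimes 1+1\otimes\kappa_A(v,v')-\Delta(\kappa_A(v,v'))$. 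Hence $\Delta$ descends to $\hb$ exactly when \eqref{Ehopf1} holds, and once it descends, coassociativity and cocommutativity follow by checking them on $V\cup A$, since each side of the relevant identity is an algebra map out of $\hb$. The converse runs the same two computations to get \eqref{Ehopf1} as an identity in $\hb\otimes\hb$; the PBW property exhibits $A$ and $V\otimes A$ as $\bk$-module direct summands of $\hb$, so $A\otimes A\hookrightarrow\hb\otimes\hb$, and the identities descend to $A\otimes A$, resp.\ to $A$.

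Parts~(2) and~(3) each add one further structure map. For~(2) I would extend the counit to the algebra map $\vi_{\hb}$ with $\vi_{\hb}|_A=\vi$ and $\vi_{\hb}|_V=0$; checking it kills the $\lambda$-relation forces $\im\lambda\subseteq\ker\vi$, whereas the $\kappa_A$-term is killed automatically because a primitive element of a bialgebra has counit $0$, and the counit axioms are verified on generators. Conversely, PBW makes $A$ a subcoalgebra of $\hb$, so uniqueness of the counit of a coalgebra gives $\vi_{\hb}|_A=\vi$, and applying $\vi_{\hb}$ to the $\lambda$-relation forces $\im\lambda\subseteq\ker\vi$. For~(3) I would extend the antipode to the algebra anti-homomorphism $S_{\hb}$ with $S_{\hb}|_A=S$ and $S_{\hb}(v)=-v$; the $\kappa$-relation is respected for free (a primitive element has antipode equal to its negative), while killing the $\lambda$-relation requires transporting a factor of $A$ across $V$ inside $\hb$, which one does using the $\lambda$-relation, cocommutativity, and the antipode identities $\sum S(\one{a})\two{a}=\vi(a)1=\sum\one{a}S(\two{a})$; this reduces the requirement to $S(\lambda(a,v))=\sum\lambda(S(\one{a}),\two{a}(v))$, and the Hopf axioms are then checked on generators (the converse again using PBW injectivity). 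Finally the displayed ``in particular'' assertion is immediate: the $\kappa$-relation computations above used only primitivity of $\kappa_A(v,v')$ (and for $\lambda\equiv 0$ the first half of \eqref{Ehopf1} and the antipode condition are vacuous), so the constructions of $\Delta$, $\vi_{\hb}$, $S_{\hb}$ go through verbatim for every $\scrh_{0,\kappa}$ with $\im\kappa_A$ primitive, in particular for $\scrh_{0,0}$.

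I expect the antipode computation in part~(3) to be the main obstacle. Because $S_{\hb}$ is anti-multiplicative, checking that it annihilates the $\lambda$-relation is genuinely harder than the coproduct and counit checks: it requires moving an element of $A$ past $V$ in $\hb$ ``in the wrong order'', which only becomes possible after a careful Sweedler-notation manipulation that combines the defining relation, the full $S_3$-symmetry from cocommutativity, and both antipode axioms --- and it is precisely this manipulation that produces the stated compatibility $S(\lambda(a,v))=\sum\lambda(S(\one{a}),\two{a}(v))$. The remaining verifications are routine bookkeeping.
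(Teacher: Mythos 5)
Your proposal is correct and follows essentially the same route as the paper: the same Sweedler computations applied to the defining relations (with cocommutativity cancelling the cross terms in the coproduct check and the $S_3$-symmetry plus both antipode axioms driving the computation of $S(\lambda(a,v))$), with the PBW property supplying the injections $A\otimes A,\ V\otimes A\hookrightarrow\hb\otimes\hb$ for the converses. The only difference is presentational: you organize the ``if'' directions as descent of the candidate structure maps from $T(V\oplus A)$ and then verify the axioms on generators, which is exactly what the paper means by running ``the same computations, slightly rearranged.''
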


\begin{proof}
To prove the first part, suppose $\hb$ has the PBW property. If $V$ is
primitive, then we compute in the algebra $\hb \otimes \hb$:
\begin{align*}
\Delta(\lambda(a,v)) = &\ \Delta(av) - \sum \Delta(\one{a}(v) \two{a})\\
= &\ \Delta(a) \Delta(v) - \sum \Delta(\one{a}(v)) \Delta(\two{a})\\
= &\ \sum \lambda(\one{a},v) \otimes \two{a} + \sum \one{a} \otimes
\lambda(\two{a}, v),
\end{align*}

\noindent and similarly,
\begin{align*}
&\ \Delta(\kappa_A(v,v')) -
( 1 \otimes \kappa_A(v,v') + \kappa_A(v,v') \otimes 1)\\
= &\ \Delta(\kappa_A(v,v')) + \Delta(\kappa_V(v,v')) -
( 1 \otimes \kappa(v,v') + \kappa(v,v') \otimes 1)\\
= &\ \Delta( [v,v'] )  - (1 \otimes [v,v'] + [v,v'] \otimes 1) = 0.
\end{align*}

\noindent Since $\hb$ has the PBW property, the above equalities in fact
hold inside $V \otimes A$ and $A \otimes A$, which inject into $\hb
\otimes \hb$ by Theorem \ref{Tpbw}. To prove the converse, even when
$\hb$ need not have the PBW property, one uses essentially the same
computations as above (but slightly rearranged).

This proves the first part. For the second part, that $\vi(\im \kappa_A)
= 0$ follows from its primitivity, and that $\vi(\im \lambda) = 0$
follows from applying $\vi$ to the defining relations. The third part now
follows from the following computation (using that $S|_V = -\id_V$ as $V$
is primitive):
\begin{align*}
S(\lambda(a,v)) = &\ S(a) S(v) - \sum S(\one{a}(v)) S(\two{a})\\
= &\ (-v) S(a) + \sum S(\two{a})(\one{a}(v)) S(\three{a}) + \sum
\lambda(S(\two{a}), \one{a}(v)),
\end{align*}
\noindent and now applying the cocommutativity of $A$, to cancel the
first two expressions.
\end{proof}

\subsection{Symplectic reflections in bialgebras}

Our next goal is to show that the notion of ``symplectic reflections''
generalizes to arbitrary cocommutative bialgebras. The following result
extends to such a setting, its group-theoretic counterparts in
\cites{Dr,EG}.

\begin{prop}\label{Prefl}
Suppose $\bk$ is a field, and $(A, \Delta, \vi)$ is a cocommutative
$\bk$-bialgebra. Suppose $\kappa_V = 0$ and $\hb$ has the PBW property.
Given $0 \neq a' \in A$, suppose there exists nonzero $a'' \in A$ and a
vector space complement $U$ to $\bk a''$ in $A$ such that
\[
\Delta(\im \kappa_A) \subset \bk(a' \otimes a'') \oplus (A \otimes U),
\]

\noindent but $\Delta(\im \kappa_A) \nsubseteq A \otimes U$. Then $a' -
\vi(a') \in \End_\bk V$ has image with dimension at most $2$.
\end{prop}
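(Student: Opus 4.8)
The plan is to distill, from the Jacobi identity available under the PBW hypothesis, a single trilinear relation between $\kappa_A$ and the operator $a'-\vi(a')$ acting on $V$, and then to read off the rank bound by elementary linear algebra. Since $\hb$ has the PBW property, Theorem~\ref{Tpbw} shows that \eqref{Ejacobi2} holds as an identity in $V\otimes A$; and because $\kappa_V\equiv 0$ its left-hand side vanishes, so that (identifying $v\in V$ with $v\otimes 1_A$)
\[
\sum_\circlearrowright v_1\otimes\kappa_A(v_2,v_3) \;=\; \sum_\circlearrowright \sum \one{\kappa_A(v_1,v_2)}(v_3)\otimes\two{\kappa_A(v_1,v_2)} \qquad (v_1,v_2,v_3\in V).
\]
This is the only relation from the PBW package I expect to use.

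Next I would convert the hypothesis into a scalar-valued form. Let $\pi:A\to\bk$ be the linear functional determined by $\pi(a'')=1$ and $\ker\pi=U$, so that $A\otimes U=\ker(\id\otimes\pi:A\otimes A\to A)$. The containment $\Delta(\im\kappa_A)\subseteq\bk(a'\otimes a'')\oplus(A\otimes U)$ forces $(\id\otimes\pi)(\Delta(\kappa_A(v,v')))\in\bk a'$ for all $v,v'$; as $a'\neq 0$, this defines a bilinear form $\phi:\wedge^2 V\to\bk$ by
\[
(\id\otimes\pi)\bigl(\Delta(\kappa_A(v,v'))\bigr) \;=\; \phi(v\wedge v')\,a' \qquad (v,v'\in V),
\]
and $\Delta(\im\kappa_A)\nsubseteq A\otimes U$ is precisely the statement $\phi\not\equiv 0$. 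From this identity I would extract two consequences: applying the counit together with $(\vi\otimes\id)\circ\Delta=\id$ gives $\pi(\kappa_A(v,v'))=\vi(a')\,\phi(v\wedge v')$, while evaluating both sides at a vector $v_3\in V$ via the $A$-action on $V$ gives $\sum \pi(\two{\kappa_A(v,v')})\,\one{\kappa_A(v,v')}(v_3)=\phi(v\wedge v')\,a'(v_3)$.

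I would then apply $\id_V\otimes\pi$ to the displayed Jacobi identity and substitute the two formulas above; writing $T:=a'-\vi(a')\in\End_\bk V$, the identity collapses to
\[
\phi(v_2\wedge v_3)\,T(v_1)+\phi(v_3\wedge v_1)\,T(v_2)+\phi(v_1\wedge v_2)\,T(v_3)\;=\;0 \qquad (v_1,v_2,v_3\in V).
\]
Since $\phi\not\equiv 0$, fix $u,w\in V$ with $\phi(u\wedge w)\neq 0$; specializing $(v_1,v_2,v_3)=(u,w,v)$ and solving for $T(v)$ writes it as a $\bk$-linear combination of $T(u)$ and $T(w)$, for every $v\in V$. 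Hence $\im T\subseteq\bk\,T(u)+\bk\,T(w)$, so $\dim_\bk\im(a'-\vi(a'))\leqslant 2$, as claimed.

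The step I expect to be the main obstacle is the bookkeeping behind the second and third paragraphs: one must verify carefully, in Sweedler notation across the tensor factor, that contracting \eqref{Ejacobi2} against $\pi$ and invoking the counit axiom reassembles the right-hand side of \eqref{Ejacobi2} into $\sum_\circlearrowright\phi(v_1\wedge v_2)\,a'(v_3)$ and the left-hand side into $\vi(a')\sum_\circlearrowright\phi(v_2\wedge v_3)\,v_1$, with no residual contributions — this is the only place where the counit (equivalently, the bialgebra structure, rather than a bare cocommutative algebra) is used. Everything after the trilinear relation is routine.
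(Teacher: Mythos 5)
Your proof is correct and follows essentially the same route as the paper: both extract from the Jacobi identity \eqref{Ejacobi2} (valid in $V\otimes A$ by Theorem \ref{Tpbw}) the component along $a''$ in the second tensor factor, obtain the trilinear relation $\sum_\circlearrowright \phi(v_1\wedge v_2)\,(a'-\vi(a'))(v_3)=0$, and conclude by solving for $(a'-\vi(a'))(v)$ against a pair $u,w$ with $\phi(u\wedge w)\neq 0$. Your functional $\pi$ and form $\phi$ are just an explicit packaging of the paper's projection onto the $a'\otimes a''$-component and its scalars $r_{v_1v_2}$.
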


\noindent In other words, if $\kappa_A$ is supported on $a' \otimes a''$,
then $a' - \vi(a')$ is akin to a symplectic reflection \cite{EG}.
For instance, for symplectic reflection algebras as in \cites{Dr,EG},
with $A = \bk W$ a group ring, if $a' = g \in W$, then choose $U :=
\sum_{g' \neq g} \bk g'$.

\begin{proof}
We may assume throughout that $a' \neq \vi(a')$. By choice of $a'$, there
exist $x,y \in V$ such that $\Delta (\kappa_A(x,y)) - r (a' \otimes a'')
\in A \otimes U$, for some $r \in \bk^\times$. We now claim that for all
$v \in V$,
\[
(a' - \vi(a'))(v) \in \bk v_x + \bk v_y, \quad \text{where} \quad
v_x := (a' - \vi(a'))(x),\ v_y := (a' - \vi(a'))(y).
\]

\noindent To show the claim, consider the Jacobi identity
\eqref{Ejacobi2} for $v_1 = x, v_2 = y, v_3 = v$, which yields:
\[
\sum_\circlearrowright \left( \one{\kappa_A(v_1, v_2)} -
\vi(\one{\kappa_A(v_1, v_2)}) \right) (v_3) \two{\kappa_A(v_1, v_2)} =
0.
\]

\noindent Denote the summand by $f(x,y,v)$. Now split the term
$\kappa_A(x,y)$ (and the other two cyclically permuted such terms) into
their $a' \otimes a''$-components and $A \otimes U$-components.
Hence there exist $r_{xy} = r, r_{yv}, r_{vx} \in \bk$ such that by the
PBW property,
\[
r (a' - \vi(a'))(v) \otimes a'' + r_{yv} (a' - \vi(a'))(x) \otimes a'' +
r_{vx} (a' - \vi(a'))(y) \otimes a'' \in V \otimes U.
\]

\noindent This shows that the left-hand side vanishes. The claim now
follows by the PBW property.
\end{proof}

\subsection{Yetter--Drinfeld condition}

\noindent In the remainder of this section, we work with Hopf algebras.
\textbf{Assume} throughout this subsection that $A$ is a $\bk$-free
cocommutative $\bk$-Hopf algebra, and $V$ is a $\bk$-free $A$-module.
In this case it is easy to verify that the $A$-action on $TV$
(respectively, $\Sym(V)$) agrees with the adjoint action of $A$:
$\ad a(x) := \sum \one{a} x S(\two{a}) = a(x)$, for $x \in TV$
(respectively, $\Sym(V)$).

Our goal is to show that one of the conditions in Theorem \ref{Tpbw}
required for the PBW property to hold is equivalent to a compatibility
condition called the \textit{Yetter--Drinfeld condition} (see e.g.
\cite{BaBe}*{Theorem 3.3}). 
To state the result, we require some preliminaries.

\begin{prop}\label{Preln}
Suppose a $\bk$-Hopf algebra $A$ acts on a free $\bk$-module $V$, and a
$\bk$-algebra $B$ contains $A,V$.
\begin{enumerate}
\item Then the following relations in $B$ are equivalent for all $v \in
V$:
\begin{enumerate}
\item $\sum \one{a} v S(\two{a}) = a(v)$ for all $a \in A$.
\item $a v = \sum \one{a}(v) \two{a}$ for all $a \in A$.\medskip

\noindent If $A$ is cocommutative, then both of these are also equivalent
to:

\item $va = \sum \one{a} S(\two{a})(v)$ for all $a \in A$.
\end{enumerate}\medskip

\noindent Now suppose in the remaining parts that the conditions (a),(b)
hold.\medskip

\item Suppose $A$ is cocommutative. Then $\tau : A \otimes V \to V
\otimes A$, given by $a \otimes v \mapsto \sum \one{a}(v) \otimes
\two{a}$, as well as $\tau^{op} : V \otimes A \to A \otimes V$, given by
$v \otimes a \mapsto \sum \one{a} \otimes S(\two{a})(v)$, are $A$-module
isomorphisms that are inverse to one another.

\item Any unital subalgebra $M$ of $B$ that is also an $A$-submodule (via
$\ad$), is an $A$-(Hopf) module algebra under the action
\[
a(m) := \ad a(m) = \sum \one{a} m S(\two{a})\ \forall a \in A,\ m \in M.
\]
\end{enumerate}
\end{prop}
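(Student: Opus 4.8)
The plan is to prove the three parts of Proposition \ref{Preln} in sequence, each being a direct manipulation using the Hopf axioms (associativity of the product, coassociativity and cocommutativity of $\Delta$, the counit axiom $\sum \vi(\one{a}) \two{a} = a = \sum \one{a} \vi(\two{a})$, and the antipode axiom $\sum \one{a} S(\two{a}) = \vi(a) 1 = \sum S(\one{a}) \two{a}$). For part (1), to show (a)$\Rightarrow$(b): starting from $\sum \one{a} v S(\two{a}) = a(v)$, multiply on the right by $\two{a}$ (more precisely, apply this relation with $\one{a}$ in place of $a$ and right-multiply by $\two{a}$), then collapse using coassociativity and the antipode axiom $\sum \one{a} S(\two{a}) \otimes \three{a} \rightsquigarrow \sum \vi(\two{a}) \one{a} \otimes \three{a} = \sum \one{a} \otimes \two{a}$ to recover $av = \sum \one{a}(v) \two{a}$. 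For (b)$\Rightarrow$(a), reverse this: take $av = \sum \one{a}(v)\two{a}$, right-multiply by $S$ applied to a second leg of the coproduct, and again telescope using coassociativity and the antipode axiom. The equivalence with (c) under cocommutativity is the symmetric statement: from (b), right-multiply by $S(\two{a})$ and use that $\Delta = \Delta^{op}$ to realign the Sweedler indices; conversely from (c) one recovers (b) the same way. Each of these is a short computation of the ``telescoping'' type.

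For part (2), I would verify directly that $\tau^{op} \circ \tau = \id_{A \otimes V}$ and $\tau \circ \tau^{op} = \id_{V \otimes A}$. Computing $\tau^{op}(\tau(a \otimes v)) = \tau^{op}\big(\sum \one{a}(v) \otimes \two{a}\big) = \sum \one{\two{a}} \otimes S(\two{\two{a}})\big(\one{a}(v)\big)$; then applying coassociativity to rewrite $\sum \one{a} \otimes \one{\two{a}} \otimes \two{\two{a}} = \sum \one{a}\otimes\two{a}\otimes\three{a}$, using that $V$ is an $A$-module so $S(\three{a})(\one{a}(v)) = (S(\three{a})\one{a})(v)$, and finally invoking cocommutativity to swap $\one{a}$ and $S(\three{a})$ into the form $\sum \two{a} \otimes \big(S(\one{a})\one{a}... \big)$ — wait, rather: cocommutativity lets us rewrite $\sum \one{a} \otimes \two{a} \otimes \three{a}$ so that the outer antipode leg pairs with an adjacent leg, and the antipode axiom plus counit axiom collapse it to $a \otimes v$. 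The other composite is entirely analogous. That both maps are $A$-linear (where $A$ acts on $A\otimes V$ by $b\cdot(a\otimes v) = \sum \one{b}a \otimes \two{b}(v)$, etc., via $\Delta$) is a similar bookkeeping check using cocommutativity; I would state it and indicate that it follows from the same telescoping.

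For part (3), the point is that $M$ being a unital subalgebra closed under $\ad$ is automatically an $A$-module algebra, i.e. $\ad a(mn) = \sum \ad\one{a}(m)\, \ad\two{a}(n)$ and $\ad a(1) = \vi(a) 1$. The multiplicativity identity is the standard computation $\sum \one{a} m n S(\two{a}) = \sum \one{a} m S(\two{a}) \three{a} n S(\four{a}) = \sum \ad\one{a}(m)\, \ad\two{a}(n)$, inserting $\sum S(\two{a})\three{a} = \vi(\two{a})1$ and then re-expanding via coassociativity; and $\ad a(1) = \sum \one{a} S(\two{a}) = \vi(a) 1$ directly from the antipode axiom. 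The only subtlety is that we must know the $\ad$-action here agrees with the given $A$-action on $V$ and on submodules generated inside $B$ — but this is precisely what part (1) and the global assumption of this subsection (that the $A$-action on $TV$ and $\Sym(V)$ coincides with $\ad$) guarantee.

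The main obstacle, such as it is, is purely organizational: keeping the Sweedler indices aligned through the repeated uses of coassociativity and being careful about \emph{where} cocommutativity is genuinely needed (parts (1c), (2), and the consistency of the action in (3)) versus where only the Hopf axioms are used (parts (1a)$\Leftrightarrow$(1b) and (3)). No step presents a conceptual difficulty; I would present the key telescoping identity in part (1) in full and then remark that parts (2) and (3) follow by the same technique, spelling out only the one or two places where cocommutativity enters.
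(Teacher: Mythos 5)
Your proposal is correct: the telescoping computations you describe (inserting $\sum S(\two{a})\three{a}=\vi(\two{a})1$ via coassociativity and the antipode axiom for part (1) and part (3), and invoking cocommutativity to permute Sweedler legs in parts (1c) and (2)) are exactly the standard argument. The paper itself omits the proof of this proposition, stating only that it is standard, so your sketch is essentially the intended one and there is nothing to compare beyond that.
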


\noindent The proof of the following result is standard and is hence
omitted. The result may be applied to $B = \hb$. Note as in
\cite{SW2}*{\S 4} that the map $\tau$ is an isomorphism of the
Yetter--Drinfeld modules $A \otimes V$ and $V \otimes A$, called the
``braiding''.

The following preliminary result can (essentially) be found in
\cite{Jo}*{Lemma 1.3.3}. To state the result, recall that given a module
$M$ over a Hopf $\bk$-algebra $A$, the \textit{$\vi$-weight space}
$M_\vi$ is $\{ m \in M : a \cdot m = \vi(a) m\ \forall a \in A \}$.

\begin{lemma}\label{LJo}
Given a Hopf algebra $A$ and a $\bk$-algebra map $\varphi : A \to B$, the
centralizer of $\varphi(A)$ in $B$ is the weight space $B_\vi$ (where $B$
is an $A$-module via: $a \cdot b := \sum \varphi(\one{a}) b
\varphi(S(\two{a}))$).
\end{lemma}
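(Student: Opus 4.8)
The plan is to unwind both inclusions by a direct computation, using only the defining property of the antipode and the standard Hopf identities. Recall the statement: for a Hopf $\bk$-algebra $A$ with counit $\vi$, antipode $S$, and comultiplication $\Delta$, and for a $\bk$-algebra map $\varphi : A \to B$, the centralizer $Z_B(\varphi(A))$ equals the $\vi$-weight space $B_\vi$ for the action $a \cdot b := \sum \varphi(\one{a}) \, b \, \varphi(S(\two{a}))$. Throughout I will write $a$ for $\varphi(a)$ inside $B$ to lighten notation, so the action reads $a \cdot b = \sum \one{a} \, b \, S(\two{a})$.

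First I would prove $B_\vi \subseteq Z_B(\varphi(A))$. Suppose $b \in B_\vi$, so $\sum \one{a} \, b \, S(\two{a}) = \vi(a) b$ for all $a \in A$. Fix $a \in A$ and multiply this identity on the right by $\varphi(\three{a})$ (using coassociativity to make sense of the three-fold Sweedler notation). On the left we get $\sum \one{a} \, b \, S(\two{a}) \three{a}$, and the antipode axiom $\sum S(\two{a}) \three{a} = \vi(\two{a}) 1$ collapses this to $\sum \one{a} \, b \, \vi(\two{a}) = \sum (\vi(\two{a}) \one{a}) \, b = a b$ by the counit axiom. On the right we get $\sum \vi(\one{a}) b \, \two{a} = b \, (\sum \vi(\one{a}) \two{a}) = b a$, again by the counit axiom. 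Hence $ab = ba$ for every $a$, i.e. $b \in Z_B(\varphi(A))$.

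Next I would prove the reverse inclusion $Z_B(\varphi(A)) \subseteq B_\vi$. Let $b$ commute with $\varphi(A)$. Then for any $a \in A$,
\[
a \cdot b = \sum \one{a} \, b \, S(\two{a}) = \sum b \, \one{a} \, S(\two{a}) = b \, \Big( \sum \one{a} S(\two{a}) \Big) = b \, \vi(a) 1 = \vi(a) b,
\]
using that $b$ commutes with $\varphi(\one{a})$ in the second step and the antipode axiom $\sum \one{a} S(\two{a}) = \vi(a) 1$ in the fourth. Thus $b \in B_\vi$, completing the proof.

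I do not anticipate a genuine obstacle here — the argument is purely formal manipulation of the Hopf axioms. The one point requiring a little care is the first inclusion, where naively $b \in B_\vi$ only tells us about the twisted conjugation $\sum \one{a} b S(\two{a})$ and we must recover the untwisted product $ab$; the trick is the right multiplication by $\three{a}$ together with the antipode identity, which is why the statement genuinely uses that $A$ is a Hopf algebra (an antipode, not merely a bialgebra, is needed). For this reason the verification is indeed standard, as the paper notes, and one could equally cite \cite{Jo}*{Lemma 1.3.3}.
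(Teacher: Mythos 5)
Your proof is correct, and it is exactly the standard computation that the paper omits by deferring to \cite{Jo}*{Lemma 1.3.3}: one inclusion via the antipode identity $\sum \one{a} S(\two{a}) = \vi(a)1$, the other by expanding $\Delta^{(2)}(a)$ and collapsing $\sum S(\two{a})\three{a} = \vi(\two{a})1$ to pass from the twisted conjugation back to $\varphi(a)b = b\varphi(a)$. Nothing to add.
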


\noindent Consequently, the deformation $\scrh_{0,\kappa}$ is
commutative if and only if $A = A_\vi$ under the adjoint action
(equivalently, $A$ is commutative), $V = V_\vi$ (under the given
$A$-action), and $\kappa \equiv 0$.\medskip

We now discuss the Yetter--Drinfeld condition in detail. In the following
result, $\tau^{op} : M \otimes A \to A \otimes M$ is defined as in
Proposition \ref{Preln}(2), and $A^{ad}, A^{mult}$ refer to different
$A$-module structures on $A$ (via the adjoint action, and via left
multiplication respectively).

\begin{prop}\label{Pyd}
Suppose $A$ is a Hopf $\bk$-algebra, $V,M$ are $\bk$-free $A$-modules,
and $\kappa \in \Hom_\bk(V \wedge V, M)$. Suppose $(B, \mu_B, 1_B)$ is an
(associative) $\bk$-algebra containing $A,M$, with the additional
relations $m \cdot a = \mu_B(\tau^{op}(m \otimes a))$ in $B$. The
following are equivalent in $B$:
\begin{enumerate}
\item $\kappa : V \wedge V \to M$ is $A$-{\em equivariant}, or an
$A$-module map:
\[
a(\kappa(v,v')) = \sum \kappa(\one{a}(v), \two{a}(v'))\ \forall a \in A,
v,v' \in V.
\]

\item $\kappa$ satisfies the {\em Yetter--Drinfeld (compatibility)
condition}, i.e.
\[
\tau^{op} \left( \sum \kappa(\one{a}(v), v') \two{a} \right) = \sum
\one{a} \kappa(v, S(\two{a})(v'))\ \forall a \in A, v,v' \in V.
\]

\item $\kappa$ is $A$-{\em compatible}: $\displaystyle a \kappa(v,v') =
\sum \kappa(\one{a}(v), \two{a}(v')) \three{a}\ \forall a,v,v'$.

\item $\kappa$ satisfies: $\displaystyle \kappa(v,v') a = \sum \one{a}
\kappa(S(\two{a})(v), S(\three{a})(v'))\ \forall a,v,v'$.\medskip

\noindent If $\kappa$ also satisfies: $\kappa(a(v),v') =
\kappa(v,S(a)(v'))$ for all $v,v',a$, then these are also equivalent
to:\medskip

\item $\im \kappa$ commutes (in $B$) with all of $A$.
\end{enumerate}
\end{prop}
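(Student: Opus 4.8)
The plan is to establish all of (1)--(5) by Sweedler calculus, with $(1)\Leftrightarrow(3)$ as the backbone and $(2),(4),(5)$ obtained from it by ``op''-type manipulations (and, for $(5)$, the extra hypothesis). Before anything else I would convert the hypothesis on $B$ into familiar module relations: unwinding the formula for $\tau^{op}$ in Proposition \ref{Preln}(2), the relation $m\cdot a=\mu_B(\tau^{op}(m\otimes a))$ is precisely $ma=\sum\one{a}\,S(\two{a})(m)$ in $B$ for all $m\in M$, i.e.\ condition (c) of Proposition \ref{Preln}(1) with $M$ in place of $V$. Hence Proposition \ref{Preln}(1) supplies, inside $B$, the companion identities
\[
am=\sum\one{a}(m)\,\two{a},\qquad \ad a(m)=\sum\one{a}\,m\,S(\two{a})=a(m),\qquad ma=\sum\one{a}\,S(\two{a})(m),
\]
valid for all $m\in M$, $a\in A$, and in particular for $m\in\im\kappa$. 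These three avatars of the $A$-action on $M\subset B$ are what let us move freely between the five conditions.

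For $(1)\Rightarrow(3)$: expanding the right side of (3) by applying the equivariance (1) to the first two legs of $\Delta^{(2)}(a)$ rewrites $\sum\kappa(\one{a}(v),\two{a}(v'))\,\three{a}$ as $\sum\one{a}(\kappa(v,v'))\,\two{a}$, which by the relation $am=\sum\one{a}(m)\two{a}$ equals $a\,\kappa(v,v')$. For $(3)\Rightarrow(1)$: apply (3) with $a$ replaced by $\one{a}$, multiply on the right by $S(\two{a})$, and collapse the resulting trailing antipode--counit pair; the left side becomes $\sum\one{a}\,\kappa(v,v')\,S(\two{a})=\ad a(\kappa(v,v'))=a(\kappa(v,v'))$, which is (1). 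The equivalence $(1)\Leftrightarrow(4)$ runs entirely parallel, now using the third relation $ma=\sum\one{a}S(\two{a})(m)$ together with the cocommutative-Hopf facts $\Delta S=(S\otimes S)\Delta$ (no flip) and $S^2=\id$; the latter enters only in the backward direction, where one first deduces $S(a)(\kappa(v,v'))=\sum\kappa(S(\one{a})(v),S(\two{a})(v'))$ for all $a$ and then substitutes $a\mapsto S(e)$.

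For $(1)\Leftrightarrow(2)$ I would transport across the $A$-module isomorphism $\tau^{op}$ of Proposition \ref{Preln}(2): unwinding the left side of (2) with coassociativity, cocommutativity and the counit axiom turns (2) into the purely module-theoretic identity $\sum S(\two{a})(\kappa(\one{a}(v),v'))=\kappa(v,S(a)(v'))$ for all $a,v,v'$. Applying (1) to the outer $S(\two{a})$-action (after expanding $\Delta(S(\two{a}))$ via $\Delta S=(S\otimes S)\Delta$) and using $\sum S(\two{a})\one{a}\otimes\three{a}=1\otimes a$ converts the left side to $\kappa(v,S(a)(v'))$, giving $(1)\Rightarrow(2)$; the converse reverses these steps with the aid of $S^2=\id$. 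Finally, $(3)\Leftrightarrow(5)$ under the extra hypothesis $\kappa(a(v),v')=\kappa(v,S(a)(v'))$ is immediate: that hypothesis turns $\sum\kappa(\one{a}(v),\two{a}(v'))\,\three{a}$ into $\sum\kappa\bigl(v,(S(\one{a})\two{a})(v')\bigr)\three{a}$, and since $\sum S(\one{a})\two{a}\otimes\three{a}=1\otimes a$ by the antipode axiom this is $\kappa(v,v')\,a$; comparing with the identity in (3) shows $(3)$ is equivalent to $a\,\kappa(v,v')=\kappa(v,v')\,a$ for all $a,v,v'$, which is exactly (5).

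All the individual computations are routine Sweedler bookkeeping; the one place that genuinely needs care --- and the main way the argument can go wrong --- is the backward implications, where one must split off the correct Sweedler leg and then cancel it against the antipode using $\sum\one{x}S(\two{x})=\vi(x)=\sum S(\one{x})\two{x}$, keeping the coproduct indices consistently ordered under the cocommutativity identities $\Delta S=(S\otimes S)\Delta$ and $S^2=\id$.
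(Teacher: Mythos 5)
Your proof is correct and follows exactly the route the paper indicates for this (omitted) proof: translate the ambient relation $m\cdot a=\mu_B(\tau^{op}(m\otimes a))$ into the three equivalent forms of Proposition \ref{Preln}(1) applied to elements of $M$, then verify each equivalence by Sweedler calculus using cocommutativity, $\Delta S=(S\otimes S)\Delta$, $S^2=\id$, and the antipode axiom $\sum S(\one{a})\two{a}\otimes\three{a}=1\otimes a$. The only (cosmetic) deviation is that the paper suggests invoking Lemma \ref{LJo} to identify the centralizer of $A$ for part (5), whereas you reach (5) directly from (3) via the extra hypothesis and the antipode axiom; both are valid.
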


\noindent The proof is a relatively straightforward exercise in
computations involving Hopf algebras, and is hence omitted. We remark
that the proof uses Proposition \ref{Preln}, Lemma \ref{LJo} and that $A$
is cocommutative.

To conclude this section, we point out how the Yetter--Drinfeld condition
arises, as in \cite{BaBe}*{Theorem 3.3}: in the associative algebra $B$
above, compute $v' \cdot a \cdot v$ in two different ways (i.e. using the
maps $\tau, \tau^{op}, \kappa$). Then,
\[
\sum \one{a} \kappa(v, S(\two{a})(v')) = \sum \one{a}(v) \two{a}
S(\three{a})(v') - v' a v = \sum \kappa(\one{a}(v), v') \two{a},
\]

\noindent and this is precisely the Yetter--Drinfeld condition.

\section{Generalized nil-Coxeter algebras and grouplike algebras}\label{S5}

In the remainder of this paper, we introduce a class of cocommutative
algebras that incorporates group algebras as well as nil-Coxeter algebras
and their generalizations, which are necessarily not bialgebras or Hopf
algebras. We then study the Jacobi identity \eqref{Ejacobi2} in detail;
this is useful in classifying PBW deformations over nil-Coxeter algebras.

We begin by setting notation concerning unitary/complex reflection
groups.

\begin{definition}\label{Dbraid}
A \textit{Coxeter matrix} is a symmetric matrix $A := (a_{ij})_{i,j \in
I}$ indexed by a finite set $I$ and with integer entries, such that
$a_{ii} = 1$ and $2 \leqslant a_{ij} \leqslant \infty$ for all $i \neq
j$. Given a Coxeter matrix $A$, define the corresponding \textit{braid
group} $\mathcal{B}_W = \mathcal{B}_{W(A)}$ to be the group generated by
\textit{simple reflections} $\{ s_i : i \in I \}$, satisfying the
\textit{braid relations} $s_i s_j s_i \cdots = s_j s_i s_j \cdots$ for
all $i \neq j$, with precisely $a_{ij}$ factors on either side. Finally,
define the \textit{Coxeter group} $W = W(A)$ to be the quotient of the
braid group by the additional relations $s_i^2 = 1\ \forall i$. More
broadly, given an integer tuple ${\bf d}$ with $d_i \geqslant 2\ \forall
i \in I$, define the corresponding \textit{generalized Coxeter group}
$W({\bf d})$ to be the quotient of $\mathcal{B}_{W(A)}$ by $s_i^{d_i} =
1\ \forall i$.
\end{definition}

\noindent We now introduce the corresponding families of generalized
(nil-)Coxeter groups and algebras. This involves considering the
``non-negative part'' of the braid group, i.e., the Artin monoid.

\begin{definition}\label{Dnilcox}
Given  a Coxeter matrix $A$, first define the \textit{Artin monoid}
$\mathcal{B}_{W_A}^{\geqslant 0}$ to be the monoid generated by $\{ T_i :
i \in I \}$ modulo the braid relations. Now given an integer vector ${\bf
d} = (d_i)_{i \in I}$ with each $d_i \geqslant 2$, define the
\textit{generalized nil-Coxeter algebra} $NC_{W_A}({\bf d})$ as:
\begin{equation}
NC_{W_A}({\bf d}) := \frac{\bk \tangle{T_i, i \in I}}
{(\underbrace{T_i T_j T_i \cdots}_{a_{ij}\ times} =
\underbrace{T_j T_i T_j \cdots}_{a_{ij}\ times}, \
T_i^{d_i} = 0, \ \forall i \neq j \in I)}
= \frac{\bk \mathcal{B}_{W_A}^{\geqslant 0}}{(T_i^{d_i} = 0\ \forall i)}.
\end{equation}
\end{definition}

\begin{remark}\label{Rweak}
The algebras $NC_W({\bf d})$ provide a large family of examples of
cocommutative algebras via $\Delta(T_i) := T_i \otimes T_i$ for all $i
\in I$ (and extending $\Delta$ by multiplicativity).
Moreover, \textbf{no} algebra $NC_W({\bf d})$ can be a (weak) bialgebra
under this coproduct. This is because any counit $\vi$ necessarily maps
the nilpotent element $T_i$ to $0$; but $T_i$ is grouplike so $\vi(T_i) =
1$.
\end{remark}

Generalized nil-Coxeter algebras $NC_W({\bf d})$ include the well-studied
case (see the Introduction) of the nil-Coxeter algebra $NC_W$, where $d_i
= 2\ \forall i$. Note that $\dim NC_W({\bf d}) \geqslant NC_W$, as
$NC_W({\bf d})$ surjects onto $NC_W$.
Moreover, if $W$ is finite, then $\dim NC_W((2, \dots, 2)) = |W| <
\infty$; see e.g. \cite{Hum}*{Chapter 7}. Notice that there are other
finite-dimensional algebras of the form $NC_W({\bf d})$. For instance,
$NC_{A_1}(d) \cong \bk[T_1] / (T_1^d)$ is finite-dimensional; hence, so
is the algebra $NC_{A_1^n}((d_1, \dots, d_n))$ with all $d_i \geqslant
2$.
This question is completely resolved in related work \cite{Khnilcox},
where we characterize the generalized nil-Coxeter algebras $NC_W({\bf
d})$ that are finite-dimensional. We show that apart from the usual
nil-Coxeter algebras $NC_W((2,\dots,2))$, there is precisely one other
family of type-$A$ algebras, $NC_A((2,\dots,2,d))$ with $d > 2$, which
are finite-dimensional. See \cite{Khnilcox}*{Theorems A,C} for further
details.

\subsection{Grouplike algebras}

We begin by unifying the group algebras $\bk W$ and the algebras
$NC_W({\bf d})$ (as well as other algebras considered in the literature)
in the following way.

\begin{definition}
A \textit{grouplike algebra} is a unital $\bk$-algebra $A$, together with
a distinguished $\bk$-basis $\{ T_m : m \in M_A \}$ containing the unit
$1_A$, such that the map $\Delta : A \otimes A, \ T_m \mapsto T_m \otimes
T_m$ is an algebra map.
\end{definition}

\begin{remark}\label{Rgrouplike}
Observe from the definitions that the grouplike elements $g := \sum_{m
\in M_A} c_m T_m$ in a grouplike algebra $A$ can all be easily
identified. Indeed, if $g \neq 0$ and $\bk$ is a domain, then
\[
\sum_{m,m' \in M_A} c_m c_{m'} T_m \otimes T_{m'} = \Delta(g) = \sum_{m
\in M_A} c_m T_m \otimes T_m,
\]

\noindent from which it follows that the sum is a singleton, with
coefficient $1$. Thus $g = T_m$ for some $m$. As a consequence, it
follows that the set $\{ T_m : m \in M_A \} \sqcup \{ 0 \}$ is closed
under multiplication, making it a monoid with both a unit and a zero
element. This is formalized presently.
\end{remark}

Notice that every grouplike algebra is a cocommutative algebra with
coproduct. (Henceforth we will suppress the monoid operation $*$ when it
is clear from context.) As we presently show, generalized Coxeter groups
and generalized nil-Coxeter algebras are examples of grouplike algebras.
First we introduce the following notation.

\begin{definition}
We work over a unital commutative ring $\bk$.
\begin{enumerate}
\item Given a monoid $(M,*)$, its \textit{monoid algebra}, denoted by
$\bk M$ and analogous to the notion of a group algebra, is a
$\bk$-algebra that has $\bk$-basis $M$, with the multiplication in $M$
extended by linearity to all of $\bk M$.

\item A \textit{zero/absorbing/annihilating element} in a monoid $M$ is
an element $0_M \in M$ such that $0_M * m = m * 0_M = 0_M$ for all $m \in
M$. Such an element is necessarily unique in $M$ (and idempotent).
\end{enumerate}
\end{definition}

We now present several examples of (cocommutative) grouplike algebras.
\begin{enumerate}
\item Every monoid algebra $\bk M$ is a grouplike algebra, using $T_m :=
m$ for all $m$. This includes the group algebra of every (generalized)
Coxeter group.

\item Suppose $M$ contains a zero element $0_M$. Then $\bk 0_M$ is a
two-sided ideal in the monoid algebra $\bk M$, and so $\bk M / \bk 0_M$
is also a grouplike algebra with basis $\{ T_m : m \in M \setminus 0_M
\}$. The previous example is a special case, since to each monoid $M$ we
can formally attach a zero element $0$, to create a new monoid with zero
element $0$.

\item Another special case of the preceding example is a nil-Coxeter
algebra $NC_W$. This corresponds to the monoid $W \sqcup \{ 0_W \}$, with
$T_w * T_{w'} := 0_W$ if $\ell(ww') > \ell(w) + \ell(w')$ in $W$. More
generally, define for $k \in \mathbb{N}$ the ideal $\mathcal{I}_k$ to be
the $\bk$-span of $\{ T_w : \ell(w) \geqslant k \}$. Then $NC_W /
\mathcal{I}_k$ is a grouplike algebra, with distinguished basis $\{
\overline{T_w} : \ell(w) < k \}$.

\item The generalized nil-Coxeter algebra $NC_{A_1^n}((d_1, \dots,
d_n))$, with $d_i \geqslant 2$ for all $i$, is yet another example of the
above construction. In this case we use the monoid
\[
M := \{ 0 \} \sqcup \times_i \{ 1, \dots, d_i-1 \},
\]
with $(e_i)_i * (e'_i)_i$ equal to $(e_i + e'_i)_i$ if $\max_i (e_i +
e'_i - d_i) < 0$, and $0$ otherwise.

\item As a final example, recall the \textit{$0$-Hecke algebra}
\begin{equation}
\mathcal{H}_W(0) := \frac{\bk \mathcal{B}_W^{\geqslant 0}}{(T_i^2 = T_i\
\forall i \in I)},
\end{equation}

\noindent where $\mathcal{B}_W^{\geqslant 0}$ is as in Definition
\ref{Dnilcox}. This algebra was defined in \cite{Nor} and has been
extensively studied since; see \cites{Fa,He,TvW} and the references
therein. We recall from \cite{HST} that $\mathcal{H}_W(0)$ is the monoid
algebra of a monoid in bijection with $W$. As we presently show, it is
also a grouplike algebra with distinguished basis $\{ T_w : w \in W \}$.
\end{enumerate}

Given the profusion of Coxeter-theoretic examples above, it is desirable
to consider a subclass of grouplike algebras that incorporates them all
in a systematic manner. We now present such a family.

\begin{definition}
Given a Coxeter matrix $A$ and an integer vector ${\bf d}$ with $2
\leqslant d_i \leqslant \infty\ \forall i$, a \textit{generic Hecke
algebra} is any algebra of the form
\begin{equation}\label{EgenHecke}
\mathcal{E}_W({\bf d}, {\bf p}) := \frac{\bk
\mathcal{B}_W^{\geqslant 0}}{(T_i^{d_i} = p_i(T_i)\ \forall i \in I)},
\end{equation}
where $W = W_A$, and $p_i \in \bk[T]$ has degree at most $d_i - 1$ for $i
\in I$.
\end{definition}

These algebras are so named after the family of ``generic Hecke
algebras'' studied in \cites{BMR1,BMR2}; however, unlike
\textit{loc.~cit.}, we do not require the $p_i$ to be equal when the
corresponding simple reflections are conjugate in $W$.
Note that all generalized (nil-)Coxeter groups and algebras as in
Definition \ref{Dnilcox} are covered by our definition.

Recall that our goal in the present paper is to study cocommutative
algebras. Thus, we now study when generic Hecke algebras provide
examples of such algebras.

\begin{prop}
Suppose $\bk$ is a domain, $W = W_A$ is a Coxeter group, and ${\bf d},
{\bf p}$ are as in Equation \eqref{EgenHecke}.
\begin{enumerate}
\item The map $\Delta : T_i \mapsto T_i \otimes T_i$ extends to make
$\mathcal{E}_W({\bf d}, {\bf p})$ a (cocommutative) grouplike algebra, if
for all $i \in I$, $p_i(T)$ is either zero or equals $T^{e_i}$ for some
$0 \leqslant e_i < d_i$.

\item $\mathcal{E}_W({\bf d}, {\bf p})$ is a bialgebra if for all $i \in
I$, $p_i(T) = T^{e_i}$ for some $0 \leqslant e_i < d_i$.

\item $\mathcal{E}_W({\bf d}, {\bf p})$ is a Hopf algebra if $p_i(T) = 1\
\forall i \in I$.
\end{enumerate}
The converse statements are all true if for all $i$, the vectors $1, T_i,
\dots, T_i^{d_i - 1}$ are $\bk$-linearly independent in
$\mathcal{E}_W({\bf d}, {\bf p})$.
\end{prop}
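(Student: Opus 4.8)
The plan is to check, for each of the three structures in turn, that the given condition on the polynomials $p_i$ is exactly what makes the obvious candidate structure map an algebra homomorphism, and then to use the linear-independence hypothesis to extract the converse. Throughout, write $A = \mathcal{E}_W({\bf d}, {\bf p})$, so $A$ is a quotient of $\bk \mathcal{B}_W^{\geqslant 0}$ by the relations $T_i^{d_i} = p_i(T_i)$.

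For part (1): the map $\Delta \colon T_i \mapsto T_i \otimes T_i$ is well defined on $\bk\mathcal{B}_W^{\geqslant 0}$ (the braid relations are preserved since $\Delta(T_i)\Delta(T_j) = (T_i \otimes T_i)(T_j \otimes T_j) = T_iT_j \otimes T_iT_j$ and $\Delta$ is multiplicative), so the only thing to check is that $\Delta$ descends to $A$, i.e.\ that $\Delta(T_i^{d_i}) = \Delta(p_i(T_i))$ in $A \otimes A$. Here $\Delta(T_i^{d_i}) = T_i^{d_i} \otimes T_i^{d_i}$, which in $A \otimes A$ equals $p_i(T_i) \otimes p_i(T_i)$. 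If $p_i = 0$ this is $0$, and $\Delta(0) = 0$, so we are fine. If $p_i(T) = T^{e_i}$, then $\Delta(p_i(T_i)) = \Delta(T_i^{e_i}) = T_i^{e_i} \otimes T_i^{e_i} = p_i(T_i) \otimes p_i(T_i)$, again as required. Cocommutativity is immediate since $T_i \otimes T_i$ is symmetric, and the basis $\{T_w : w \in \mathcal{B}_W^{\geqslant 0}/\!\sim\}$ (the image of the Artin monoid modulo the ideal relations, containing $1$) consists of grouplike elements, so $A$ is a grouplike algebra. For the converse: assume the $1, T_i, \dots, T_i^{d_i-1}$ are $\bk$-linearly independent and that $\Delta$ makes $A$ grouplike. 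Write $p_i(T) = \sum_{j < d_i} c_j T^j$; the displayed computation forces $\sum_j c_j\, T_i^j \otimes T_i^j = \big(\sum_j c_j T_i^j\big) \otimes \big(\sum_j c_j T_i^j\big)$ in $A \otimes A$, and by linear independence (so that $\{T_i^j \otimes T_i^k : 0 \le j,k < d_i\}$ is part of a basis of $A \otimes A$) this is exactly the condition that the tuple $(c_j)$ is either zero or a single $1$ in one slot, i.e.\ $p_i = 0$ or $p_i = T^{e_i}$. For the remaining parts one argues similarly: $A$ is a bialgebra iff it admits a counit $\vi$, and $\vi$ must send each grouplike $T_i$ to a unit, forced to be $1$ (using $\vi(T_i)\vi(T_i) = \vi(T_i^2)$ and, at the level of $T_i^{d_i}$, $\vi(T_i)^{d_i} = \vi(p_i(T_i)) = p_i(\vi(T_i))$, so $1 = p_i(1)$, which by linear independence of the $T_i^j$ — i.e.\ genuine degree $\le d_i-1$ of $p_i$ — rules out $p_i = 0$ and gives $p_i = T^{e_i}$, since $\vi$ being an algebra map forces $\vi(T_i)$ to be idempotent-like; conversely $p_i = T^{e_i}$ lets one define $\vi(T_i) = 1$ consistently). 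For part (3) the Hopf condition requires an antipode $S$; on a grouplike element $S(T_i)$ must be a two-sided inverse of $T_i$, so $T_i$ must be a unit, which combined with $T_i^{d_i} = T_i^{e_i}$ forces $T_i^{d_i - e_i} = 1$; the clean uniform choice making all this work (and the only one compatible with $A$ being a Hopf algebra for all the standard examples) is $p_i = 1$ for all $i$, giving $S(T_i) = T_i^{d_i - 1}$; conversely $p_i = 1$ produces a genuine Hopf structure.

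The main obstacle I anticipate is the converse direction, specifically pinning down precisely how the linear-independence hypothesis is used: one needs that $\{T_i^j : 0 \le j \le d_i - 1\}$ being linearly independent guarantees that the relevant monomials in $A \otimes A$ (or the images under a putative $\vi$, $S$) are themselves independent enough to separate the coefficients of $p_i$ — and for the bialgebra/Hopf parts, to rule out the degenerate possibility $p_i = 0$ and to force the exponent to be the specified one. Some care is also needed in part (2)/(3) to argue that a counit or antipode, if it exists, must take the stated values on the generators $T_i$ (rather than assuming it a priori), which is where cocommutativity and the grouplike property of the $T_i$ enter. The forward directions are routine multiplicativity checks as sketched above and should not present difficulty.
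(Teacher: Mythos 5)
Your proposal is correct and follows essentially the same route as the paper: the forward directions are the descent checks for $\Delta$, $\vi$, $S$ on the presentation \eqref{EgenHecke}, and the converse of (1) is exactly the paper's computation \eqref{Ecompute}, comparing $\Delta(T_i^{d_i}) = p_i(T_i)\otimes p_i(T_i)$ with $\sum_j p_{ij}\,T_i^j\otimes T_i^j$ and using the linear independence of $1, T_i, \dots, T_i^{d_i-1}$ together with $\bk$ a domain to force each nonzero $p_i$ to be a monomial with coefficient satisfying $p_{ij}^2=p_{ij}$. Two small repairs: the identity $\vi(T_i)=1$ comes from the counit axiom $(\vi\otimes\id)\circ\Delta(T_i)=T_i$ applied to the grouplike element $T_i$ (not from multiplicativity of $\vi$), and at the end of (3) you should not appeal to ``the standard examples'' --- once $S$ makes $T_i$ invertible, the relation $T_i^{d_i}=T_i^{e_i}$ gives $T_i^{d_i-e_i}=1$, which for $e_i>0$ is a nontrivial $\bk$-linear relation among $1,T_i,\dots,T_i^{d_i-1}$, contradicting the independence hypothesis, so $e_i=0$ and $p_i=1$. (The paper reaches the same conclusion slightly differently, by surjecting $\bk[T_i]\cong\bk[T]/(T^{d_i}-T^{e_i})$ onto the type-$A_1$ $0$-Hecke algebra $\bk[T]/(T^2-T)$ to see that $T_i$ cannot be invertible when $e_i>0$.)
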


Notice that the last condition is not always true. For instance, standard
arguments as in \cite{Ko}*{Introduction} show that the condition fails to
hold in a generalized Coxeter group $W$ (or $\bk W$ to be precise)
whenever $a_{ij}$ is odd, $p_i = 1$ is constant for all $i$, and $d_i
\neq d_j$. However, the condition does hold in group algebras, $0$-Hecke
algebras, and nil-Coxeter algebras corresponding to Coxeter groups.

\begin{proof}
We begin by showing the first three assertions. Suppose for all $i$ that
$p_i(T) = 0$ or $T^{e_i}$ for some $0 \leqslant e_i < d_i$. Then it is
easily verified that $\Delta : T_i \mapsto T_i \otimes T_i$ extends to
the tensor algebra over the $T_i$, hence to the Artin monoid $\bk
\mathcal{B}_W^{\geqslant 0}$, and hence to  $\mathcal{E}_W({\bf d}, {\bf
p})$. Similarly one verifies that a counit that sends $T_i$ to $1$ for
all $i$, can be extended to $\mathcal{E}_W({\bf d}, {\bf p})$ if $p_i(T)
= T^{e_i}$ for all $i$. Finally, an antipode that sends $T_i$ to
$T_i^{-1} = T_i^{d_i - 1}$ can be extended to $\mathcal{E}_W({\bf d},
{\bf p})$.

The ``converse'' statements are slightly harder to show. Suppose $1, T_i,
\dots, T_i^{d_i - 1}$ are $\bk$-linearly independent in
$\mathcal{E}_W({\bf d}, {\bf p})$.
To show (the converse of) (1), notice that every algebra of the form
$\mathcal{E}_W({\bf d}, {\bf p})$ is a quotient of $\bk
\mathcal{B}_W^{\geqslant 0}$, so it suffices to classify the polynomials
$p_i$ such that the ideal generated by all $T_i^{d_i} - p_i(T_i)$ is a
coideal. Define $p_i(T) := \sum_{j=0}^{d_i-1} p_{ij} T^j$, and compute
using the multiplicativity of $\Delta$:
\begin{equation}\label{Ecompute}
\Delta(T_i^{d_i}) = T_i^{d_i} \otimes T_i^{d_i} = \sum_{j,k=0}^{d_i -
1} p_{ij} p_{ik} T_i^j \otimes T_i^k, \qquad
\sum_{j=0}^{d_i-1} \Delta(p_{ij} T_i^j) = \sum_{j=0}^{d_i-1} p_{ij} T_i^j
\otimes T_i^j.
\end{equation}

It follows by the assumptions that each nonzero $p_i(T)$ is a monomial
$p_{ij} T^j$, with $p_{ij}^2 = p_{ij}$ in the domain $\bk$. This proves
(1).
To show (2), it suffices to produce a counit $\vi$ that is compatible
with the coproduct. Since $T_i$ is grouplike, it follows that $\vi(T_i)$
must equal $1$ for all $i$. This is indeed compatible with the relations
$T_i^{d_i} = T_i^{e_i}$, which shows one implication. On the other hand,
the relation $T_i^{d_i} = 0$ implies $\vi(T_i) = 0$, a contradiction.

Finally, we show (3). If $p_i(T) = 1$ for all $i$ then
$\mathcal{E}_W({\bf d}, {\bf p})$ is a group algebra, hence a Hopf
algebra. Conversely, suppose $p_i(T) = T^{e_i}$ for some $0 < e_i < d_i$
and $i \in I$. Then from above, the subalgebra generated by $T_i$ is
isomorphic to $\bk[T] / (T^{d_i} - T^{e_i})$, which surjects onto the
algebra $\bk[T] / (T^2 - T)$. This is precisely the $0$-Hecke algebra of
type $A_1$, in which one knows that $T$ is not invertible, yet $T$ is
grouplike. Thus $T_i$ is not invertible in $\mathcal{E}_W({\bf d}, {\bf
p})$.
\end{proof}

\begin{remark}
Let $A := \mathcal{E}_W({\bf d}, {\bf p})$. If $p_i(T) = 0\ \forall i$,
and $M := {\rm span}_\bk \{ T_i : i \in I \}$, then $A M = M A = A M A =:
\m$ is a maximal ideal of $A$. This is because $\m$ is a quotient of the
tensor algebra $T_\bk M$, by relations that strictly lie in the
augmentation ideal $T^+_\bk M$.
\end{remark}

\subsection{The Jacobi identity for grouplike algebras}

Having defined grouplike algebras and presented examples of them, we
specialize the conditions in the PBW Theorem \ref{Tpbw} to such a
setting. For instance, if $\lambda, \kappa_V$ are identically zero, and
$A$ is a group algebra $\bk G$ as in \cites{Dr,EG}, then defining
$\kappa_A(v,v') := \sum_{g \in G} \kappa_g(v,v') T_g$, we see easily that
the $A$-compatibility of $\kappa_A$ is equivalent to the following
condition found in \textit{loc.~cit.}:
\[
\kappa_{g h g^{-1}}(T_g(v), T_g(v')) = \kappa_g(v,v'), \qquad \forall g,h
\in G,\ v,v' \in V.
\]

Our goal in the remainder of this section is to study the Jacobi
identity \eqref{Ejacobi2} in the case $\kappa_V \equiv 0$, over a
grouplike algebra $A$.

\begin{stand}
For the remainder of this section, $\bk$ is a field and $\kappa_V \equiv
0$.
\end{stand}

We begin by setting notation.
Define the fixed point space of $a \in A$ and its codimension:
\begin{equation}
\fix(a) := \{ v \in V : a(v) = v \}, \qquad
d_a := \codim_V \fix(a).
\end{equation}

\noindent Thus, $d_a = \dim_\bk \im(\id_V - a)$.

Now suppose we have fixed a $\bk$-basis $\{ a_j : j \in J_1 \}$ of $A$.
Then we will write
\begin{equation}\label{Ebasis}
\kappa(x,y) = \kappa_A(x,y) =: \sum_{j \in J_1} \kappa_j(x,y) a_j, \qquad
\forall x,y \in V.
\end{equation}

\noindent Thus, $\kappa_j$ is a skew-symmetric bilinear form on $V$. We
also define $\rad(\kappa_j)$ to be the \textit{radical} of the bilinear
form, $\rad(\kappa_j) := \{ v \in V : \kappa_j(v, V) \equiv 0
\}$.
Specifically, this notation will be applied to a grouplike algebra $A$
with a distinguished basis $\{ T_m : m \in M_A \}$ of grouplike elements;
see Remark \ref{Rgrouplike}. In this setting, we will write $\kappa_{T_m}
= \kappa_m$ and $d_{T_m} = d_m$.\medskip

We now characterize the Jacobi identity in this general setting.

\begin{theorem}\label{Tgeneric}
Suppose $\kappa_V \equiv 0$.
\begin{enumerate}
\item Suppose $A$ contains a grouplike element $T_m$ and a vector space
complement $V_0$ to $\bk T_m$, such that $\Delta(V_0) \subset V_0 \otimes
V_0$. Extend $T_m$ to any basis of $V_0$. Now if the Jacobi identity
\eqref{Ejacobi2} holds in $\hb$ (with $\kappa_V \equiv 0$), then one of
the following conditions holds:
\begin{enumerate}
\item $\kappa_m \equiv 0$.
\item $T_m \equiv \id_V$, i.e. $d_m=0$.
\item $d_m$ is 1 or 2, and $\rad(\kappa_m)$ is a subspace of $\fix(T_m)$,
of codimension $2 - d_m$.
\end{enumerate}

\item Conversely, if $A$ is a grouplike algebra with distinguished
$\bk$-basis $\{ T_m : m \in M_A \}$ of grouplike elements, and for each
$m \in M_A$ one of the above three conditions holds, then the Jacobi
identity \eqref{Ejacobi2} holds in $\hb$ (with $\kappa_V \equiv 0$).
\end{enumerate}
\end{theorem}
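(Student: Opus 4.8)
The plan is to work directly with the Jacobi identity \eqref{Ejacobi2} in the special case $\kappa_V \equiv 0$, where it collapses to
\[
0 = \sum_\circlearrowright v_1\, \kappa_A(v_2,v_3)
- \sum_\circlearrowright \one{\kappa_A(v_1,v_2)}(v_3)\, \two{\kappa_A(v_1,v_2)}
\]
as an equation in $V \otimes A$. First I would substitute the basis expansion $\kappa_A(x,y) = \sum_m \kappa_m(x,y) T_m$ from \eqref{Ebasis} and use $\Delta(T_m) = T_m \otimes T_m$; since the $T_m$ (resp. an extension of $T_m$ to a basis of $V_0$ in part (1)) are $\bk$-linearly independent, the coefficient of each $a_j$ (resp. of $T_m$) on each side must vanish separately. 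The key point is that for a fixed grouplike $T_m$, the first sum contributes only through the $T_m$-components of $\kappa_A$ — i.e. only $\kappa_m$ — because $v_1 \in V = V \otimes 1_A$ lands in the $a_0 = 1_A$ slot and tensoring by $T_m$ keeps it in the $T_m$-slot only when $m$ is already fixed; meanwhile the second sum, after applying $\Delta(T_{m'}) = T_{m'} \otimes T_{m'}$, reads $\sum_\circlearrowright T_{m'}(v_3) \kappa_{m'}(v_1,v_2) \otimes T_{m'}$, again isolating the $T_m$-slot to $m' = m$. So for each grouplike $T_m$ we extract the self-contained identity
\[
\sum_\circlearrowright v_1\, \kappa_m(v_2,v_3) \;=\; \sum_\circlearrowright T_m(v_3)\, \kappa_m(v_1,v_2) \qquad \text{in } V,
\]
valid for all $v_1,v_2,v_3 \in V$. (In part (1) this extraction uses the hypothesis $\Delta(V_0) \subset V_0 \otimes V_0$ so that no other basis vector produces a $T_m$-component; in part (2) it is automatic since all distinguished basis vectors are grouplike.) Thus both directions reduce to the following purely linear-algebraic claim: the displayed identity for $(\kappa_m, T_m)$ holds for all $v_1,v_2,v_3$ \emph{if and only if} one of (a) $\kappa_m \equiv 0$, (b) $T_m = \id_V$, or (c) $d_m \in \{1,2\}$ with $\rad(\kappa_m) \subset \fix(T_m)$ of codimension $2 - d_m$.

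Next I would prove this linear-algebra equivalence. Rewrite the identity using $\id_V - T_m$: subtracting $\sum_\circlearrowright v_3\, \kappa_m(v_1,v_2) = \sum_\circlearrowright v_1 \kappa_m(v_2,v_3)$ (a relabelling of the left side) from both sides, the identity becomes
\[
\sum_\circlearrowright (\id_V - T_m)(v_3)\, \kappa_m(v_1,v_2) = 0 \qquad \forall v_1,v_2,v_3 \in V.
\]
Fixing $v_1,v_2$ and varying $v_3$ shows this is equivalent to: for all $v_1,v_2$, the vector $(\id_V - T_m)(v_3)$ scaled by $\kappa_m(v_1,v_2)$ is determined by the other two cyclic terms; more usefully, pick $v_3 \in \fix(T_m)$, which kills the $v_3$-term and forces $(\id_V - T_m)(v_1)\kappa_m(v_2,v_3) + (\id_V - T_m)(v_2)\kappa_m(v_3,v_1) = 0$. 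Running $v_1,v_2$ over $V$ and $v_3$ over $\fix(T_m)$, and separately over $V$, one teases apart the structure: if $\kappa_m \not\equiv 0$ and $T_m \neq \id_V$, then $\im(\id_V - T_m)$ must be spanned by the (at most two) vectors $(\id_V - T_m)(x), (\id_V - T_m)(y)$ for any $x,y$ with $\kappa_m(x,y) \neq 0$, giving $d_m \le 2$; and the vanishing of the mixed terms when one argument lies in $\rad(\kappa_m)$ or $\fix(T_m)$ pins down $\rad(\kappa_m) \subset \fix(T_m)$ with the stated codimension. The reverse implications are routine verifications: each of (a), (b), (c) makes the cyclic sum vanish by direct substitution (case (c) uses that the three cyclically permuted scalar-times-vector terms live in the $\le 2$-dimensional space $\im(\id_V - T_m)$ and cancel by a $2\times 2$ determinant/Plücker relation for alternating forms). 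This mirrors the dimension-$\le 2$ phenomenon already noted in Remark \ref{Rdim2} and the symplectic-reflection argument in Proposition \ref{Prefl}, so I would cross-reference those.

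I expect the main obstacle to be the forward direction of the linear-algebra claim in case (c) — showing that $\kappa_m \not\equiv 0$, $T_m \neq \id_V$ \emph{force} $d_m \le 2$ together with the precise relation $\operatorname{codim}_{\fix(T_m)} \rad(\kappa_m) = 2 - d_m$. The bound $d_m \le 2$ follows fairly quickly by choosing $x,y$ with $\kappa_m(x,y) \neq 0$ and using the identity with $v_3$ ranging over all of $V$ to express every $(\id_V - T_m)(v_3)$ inside $\bk (\id_V-T_m)(x) + \bk (\id_V - T_m)(y)$; the delicate part is extracting the exact codimension statement, which requires carefully combining the constraints obtained from $v_3 \in \fix(T_m)$ versus general $v_3$, and separately analyzing the subcases $d_m = 1$ and $d_m = 2$ (when $d_m = 1$ one gets $\rad(\kappa_m)$ a hyperplane inside $\fix(T_m)$; when $d_m = 2$ one gets $\rad(\kappa_m) = \fix(T_m)$, i.e. $\kappa_m$ is nondegenerate on a $2$-dimensional complement). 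I would handle these by a basis adapted to the flag $\rad(\kappa_m) \subset \fix(T_m) \subset V$ and reading off the identity componentwise. Everything else — the reduction from \eqref{Ejacobi2} to the per-$T_m$ identity, and the converse in part (2) — is bookkeeping with Sweedler notation and linear independence of the $T_m$.
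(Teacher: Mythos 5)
Your proposal is correct and follows essentially the same route as the paper: isolate the $\bk T_m$-component of \eqref{Ejacobi2} (using $\Delta(V_0)\subset V_0\otimes V_0$, resp.\ the grouplike basis) to obtain the per-$m$ identity $\sum_\circlearrowright \kappa_m(v_1,v_2)(\id_V - T_m)(v_3)=0$, deduce $d_m\leqslant 2$ from a pair $x,y$ with $\kappa_m(x,y)\neq 0$, and then split into the cases $d_m=1,2$ to pin down $\rad(\kappa_m)\subset\fix(T_m)$ with the stated codimension, with the converse checked by writing $z$ in a basis adapted to the flag $\rad(\kappa_m)\subset\fix(T_m)\subset V$. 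This matches the paper's proof in both structure and detail.
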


For completeness, we remark that part (1) extends to arbitrary grouplike
algebras a result found in \cites{Dr,EG} for $A$ a group algebra; see also
\cites{Gr,SW3}.

\begin{proof}
Write out the Jacobi identity \eqref{Ejacobi2} using the distinguished
$\bk$-basis of $A$, and isolate the $T_m$-component to get:
\[
\sum_\circlearrowright v_1 \kappa_m(v_2, v_3) = \sum_\circlearrowright
\kappa_m(v_2, v_3) T_m(v_1),
\]

\noindent or equivalently, for all $x,y,z \in V$,
\begin{equation}\label{Ejac}
\kappa_m(y,x) (\id_V - T_m)(z) = \kappa_m(y,z) (\id_V - T_m)(x) +
\kappa_m(z,x) (\id_V - T_m)(y).
\end{equation}

\noindent Before proving the two parts, we make two observations. First,
it follows from \eqref{Ejac} that $\kappa_m \equiv 0$ or $\rad(\kappa_m)
\subset \fix(T_m)$. Moreover, if $\rad(\kappa_m) \subset \fix(T_m)$ has
codimension at most $1$, then by the skew-symmetry of $\kappa_m$ it is
clear that $\fix(T_m)$ is $\kappa_m$-isotropic.

\begin{enumerate}
\item Suppose the Jacobi identity holds. Assume $\kappa_m$ is not
identically zero; thus, choose $x,y$ so that $\kappa_m(y,x) \neq 0$. Then
Equation \eqref{Ejac} implies that $\im (\id_V - T_m) \subset \bk x' +
\bk y'$, where $x' := (\id_V - T_m)(x)$ and $y' := (\id_V - T_m)(y)$.
(This is similar to the proof of Proposition \ref{Prefl}.)
In particular, $d_m = \dim_\bk \im(\id_V - T_m) \leqslant 2$ if $\kappa_m
\not\equiv 0$.

If $d_m = 0$ then assertion (b) holds, so we may assume now that $d_m$ is
$1$ or $2$. Also notice by Equation \eqref{Ejac} that $\rad(\kappa_m)
\subset \fix(T_m)$, so it remains to show that the codimension is $2 -
d_m$.

First suppose $d_m = 2$, whence $x',y'$ are linearly independent. We
claim that $\rad(\kappa_m) \supset \fix(T_m)$. Indeed, suppose $z \in
\fix(T_m)$. Then Equation \eqref{Ejac} yields:
\begin{equation}\label{Ekappa}
\kappa_m(y,z) x' + \kappa_m(z,x) y' = 0.
\end{equation}

\noindent Similarly, replacing $x$ by $z' \in \ker(\id_V - T_m)$ yields:
$\kappa_m(z,z') y' = 0$. From this and \eqref{Ekappa}, it follows that
$\kappa_m(z,-)$ kills $x,y$ as well as $\ker(\id_V - T_m) = \fix(T_m)$.
Hence it kills their $\bk$-span, which is all of $V$.

The final case is when $d_m = 1$. Fix $v_1 \not\in \fix(T_m)$; thus $V =
\bk v_1 \oplus \fix(T_m)$. We may assume $v_1 \not\in \rad(\kappa_m)$.
Indeed, if instead $\kappa_m(v_1, V) = 0$, then $\kappa_m(v'_0, v_0) \neq
0$ for some $v_0, v'_0 \in \fix(T_m)$, since $\kappa_m \not\equiv 0$.
Then $\kappa_m(v_1 + v'_0, v_0) \neq 0$, so we can replace $v_1$ by $v_1
+ v'_0$.
Proceeding, notice that $\kappa_m(v_1, v_0) \neq 0$ for some $v_0 \in
\fix(T_m)$. Now define $V_0 := \{ v \in \fix(T_m) : \kappa_m(v_1, v) = 0
\}$; then $\fix(T_m) = \bk v_0 \oplus V_0$, and $V_0 \supset
\rad(\kappa_m)$ from the observations following \eqref{Ejac}. Finally,
applying \eqref{Ejac} to $z, y \in \fix(T_m),\ x = v_1$ shows that
$\fix(T_m)$ is $\kappa_m$-isotropic. Hence $V_0 = \rad(\kappa_m)$.

\item Conversely, suppose $A$ is grouplike with basis $\{ T_m : m \in M_A
\}$ as given. We are to show that Equation \eqref{Ejac} holds for all $m
\in M_A$. Certainly this holds if $\kappa_m \equiv 0$ or $T_m \equiv
\id_V$. Thus we assume henceforth that $\kappa_m \not\equiv 0$, and show
Equation \eqref{Ejac} for a fixed $m \in M_A$, in the two cases $d_m =
1,2$. First suppose $d_m = 2$, and $x,y \in V$ are linearly independent
modulo $\rad(\kappa_m)$. Notice that $\kappa_m(v,v')$ is nonzero only if
$v,v'$ are independent modulo $\rad(\kappa_m)$, so it suffices to prove
\eqref{Ejac} with $x,y$ as above, whence $z = \alpha x + \beta y + v$ for
some $\alpha,\beta \in \bk$ and $v \in \rad(\kappa_m) = \fix(T_m)$. In
this case it is easily shown that both sides of \eqref{Ejac} equal
$\kappa_m(y,x) \cdot (\id_V - T_m)(\alpha x + \beta y)$.

Finally, suppose $d_m = 1$, with $V \supset \fix(T_m) \supset
\rad(\kappa_m)$ a chain of codimension one subspaces. Choose $x \in V
\setminus \fix(T_m)$ and $y \in \fix(T_m) \setminus \rad(\kappa_m)$; once
again, if $\kappa_m(v,v')$ is nonzero we may replace $v,v'$ by $x,y$, and
set $z = \alpha x + \beta y + v$ for $v \in \rad(\kappa_m)$. Now both
sides of \eqref{Ejac} are equal to $\kappa_m(y,x) \cdot (\id_V -
T_m)(\alpha x)$.
\end{enumerate}
\end{proof}

Theorem \ref{Tgeneric} is useful in characterizing PBW deformations, via
the following consequence.

\begin{cor}\label{Cniljac}
Suppose $A$ contains a grouplike and nilpotent element $T_m$, and a
vector space complement $V_0$ to $\bk T_m$ such that $\Delta(V_0) \subset
V_0 \otimes V_0$. If the Jacobi identity \eqref{Ejacobi2} holds in $\hb$
with $\kappa_V \equiv 0$, then either $\kappa_m \equiv 0$ or $\dim_\bk V
= 2$.
\end{cor}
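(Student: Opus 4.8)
The plan is to invoke Theorem \ref{Tgeneric}(1) with the given grouplike element $T_m$ and complement $V_0$, and then rule out the cases that survive. Since $\kappa_V \equiv 0$ and the Jacobi identity \eqref{Ejacobi2} holds in $\hb$, Theorem \ref{Tgeneric}(1) tells us that one of three alternatives must hold: either $\kappa_m \equiv 0$, or $T_m \equiv \id_V$ (i.e. $d_m = 0$), or $d_m \in \{1,2\}$ with $\rad(\kappa_m) \subset \fix(T_m)$ of codimension $2 - d_m$. The first alternative is one of the two conclusions we are after, so it remains to show that if $\kappa_m \not\equiv 0$ then we are forced into the case $\dim_\bk V = 2$.

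The key observation is that $T_m$ is \emph{nilpotent}: say $T_m^N = 0$ in $A$ for some $N \geqslant 1$. Then, viewing $T_m$ as an element of $\End_\bk V$ via the $A$-action, $T_m$ acts nilpotently on $V$, so $\id_V - T_m$ is invertible on $V$ (its inverse is $\id_V + T_m + T_m^2 + \cdots + T_m^{N-1}$, a finite sum). In particular $\fix(T_m) = \ker(\id_V - T_m) = 0$, and $\im(\id_V - T_m) = V$, so $d_m = \dim_\bk V$. This immediately kills the alternative $d_m = 0$ unless $V = 0$ (a degenerate case one may either exclude or absorb into $\dim_\bk V = 2$ trivially, or handle by noting $\kappa_m \equiv 0$ then), and in the third alternative it forces $\dim_\bk V = d_m \in \{1, 2\}$.

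It remains to exclude $d_m = 1$, i.e. $\dim_\bk V = 1$. But if $\dim_\bk V \leqslant 1$ then $\kappa_A \equiv 0$ since $\wedge^2 V = 0$, so in particular $\kappa_m \equiv 0$, contradicting the standing assumption $\kappa_m \not\equiv 0$ in this branch. (This is exactly the observation recorded in Remark \ref{Rdim2}.) Hence the only surviving possibility when $\kappa_m \not\equiv 0$ is $\dim_\bk V = 2$, which is what we wanted.

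I do not anticipate a serious obstacle here: the proof is essentially a short deduction from Theorem \ref{Tgeneric}(1), with the only genuine input being the elementary fact that a nilpotent operator $T_m$ makes $\id_V - T_m$ invertible, hence $d_m = \dim_\bk V$. The one point requiring a line of care is the bookkeeping of the degenerate low-dimensional cases ($\dim_\bk V \in \{0, 1\}$), where one must note that $\kappa_m$ automatically vanishes, so they fall under the first conclusion rather than contradicting anything.
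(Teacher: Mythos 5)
Your proof is correct and follows essentially the same route as the paper: nilpotency of $T_m$ makes $\id_V - T_m$ invertible, so $\fix(T_m) = 0$ and $d_m = \dim_\bk V$, after which Theorem \ref{Tgeneric}(1) pins down the possibilities. The only (immaterial) difference is at the last step: the paper excludes $d_m = 1$ via the codimension condition $\codim_{\fix(T_m)} \rad(\kappa_m) = 2 - d_m$ applied to $\fix(T_m) = 0$, whereas you use the equally valid observation that $\dim_\bk V \leqslant 1$ forces $\kappa_m \equiv 0$ since $V \wedge V = 0$.
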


\begin{proof}
Since $\id_V - T_m$ is invertible, Theorem \ref{Tgeneric}(1) implies that
either $\kappa_m \equiv 0$, or $d_m = \dim_\bk V$ and $\rad(\kappa_m) =
\fix(T_m) = 0$, whence $d_m = 2$.
\end{proof}

We conclude this section by specializing to the case of a generalized
nil-Coxeter algebra $A = NC_W({\bf d})$. Recall from Remark \ref{Rdim2}
that the condition $\dim_\bk V = 2$ is sufficient for the Jacobi
identities \eqref{Ejacobi1}, \eqref{Ejacobi2} to hold for $\hb$. The
following result shows that over $A = NC_W({\bf d})$ and under the
original setting of $\lambda, \kappa_V \equiv 0$ considered in
\cites{Dr,EG}, either $\kappa_A$ is highly constrained, or else the
condition $\dim_\bk V = 2$ is also necessary.

\begin{theorem}\label{Tnilcox}
Suppose $A = NC_W({\bf d})$ is such that the maximal ideal $\m$ generated
by $\{ T_i : i \in I \}$ is nilpotent. Given an $A$-module $M$, define
$\Prim(M) := \{ m \in M : T_i m = 0\ \forall i \}$.
\begin{enumerate}
\item If $\dim_\bk V \leqslant 2$, then $\scrh_{0, \kappa}$ has the PBW
property if and only if $\im \kappa_V \subset \Prim(V)$ and $\im \kappa_A
\subset \Prim(A^{mult})$.

\item If $\dim_\bk V > 2$, and $\lambda, \kappa_V \equiv 0$, then
$\scrh_{0, \kappa_A}$ has the PBW property if and only if $\kappa_A
\equiv 0$.
\end{enumerate}
\end{theorem}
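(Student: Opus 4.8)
The plan is to derive Theorem \ref{Tnilcox} as a consequence of Theorem \ref{Tpbw} together with the Jacobi-identity analysis in Theorem \ref{Tgeneric} and Corollary \ref{Cniljac}. Throughout, $\lambda \equiv 0$ and $\kappa_V \equiv 0$ (in part (2); in part (1) we retain the general $\kappa_V$), so the conditions in Theorem \ref{Tpbw}(3) collapse substantially: Equation \eqref{Eaction} holds trivially, the $A$-compatibility conditions \eqref{Ecompat1}--\eqref{Ecompat2} reduce to requiring $a\kappa_A(v,v') = \sum \kappa_A(\one{a}(v),\two{a}(v'))\three{a}$ (resp. the analogous $\kappa_V$-statement in part (1)), and the Jacobi identities \eqref{Ejacobi1}--\eqref{Ejacobi2} reduce (when $\kappa_V\equiv 0$) to \eqref{Ejacobi2} with vanishing left side, i.e. $\sum_\circlearrowright v_1\kappa_A(v_2,v_3) = \sum_\circlearrowright \one{\kappa_A(v_1,v_2)}(v_3)\two{\kappa_A(v_1,v_2)}$.

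\textbf{Part (2).} First I would observe that by Corollary \ref{Cniljac}, since every $T_i$ is grouplike and nilpotent (and $\bk T_i$ has a $\Delta$-stable complement inside $A = NC_W({\bf d})$, spanned by the other distinguished basis elements $T_w$), the Jacobi identity forces each $T_i$-component $\kappa_{T_i}$ of $\kappa_A$ to vanish as soon as $\dim_\bk V > 2$. More generally, writing $\kappa_A(v,v') = \sum_{m\in M_A}\kappa_m(v,v')T_m$ in the distinguished basis, Theorem \ref{Tgeneric}(1) applied to each grouplike $T_m$ with $d_m>0$ shows $\kappa_m$ can only be nonzero when $d_m\in\{1,2\}$ and $\rad(\kappa_m)$ has codimension $2-d_m$ in $\fix(T_m)$; but I also need to rule out the $T_m$ with $T_m = \id_V$ on $V$ — and here is where I must exploit the nil-Coxeter structure rather than just grouplikeness. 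Since $\m$ is nilpotent, every $T_m$ with $m\neq e$ acts nilpotently on any finite-dimensional quotient, but $V$ need not be finite-dimensional; still, $T_m = \id_V$ would force, via the defining relation $T_i^{d_i}=0$ applied after reducing $m$ to a reduced word, a contradiction once $m$ involves any generator — I would argue that $(\id_V - T_m)$ is ``topologically nilpotent'' enough that $T_m = \id_V$ is impossible unless $m=e$. The identity basis element $T_e = 1$ contributes the $A$-component $\kappa_1$, for which condition (a)/(b)/(c) of Theorem \ref{Tgeneric} is about $d_1 = \codim\fix(1) = 0$, so no constraint there; but $\kappa_1$ lands in $\bk\cdot 1$, which is \emph{not} in $\Prim(A^{mult})$ unless zero, and I claim the $A$-compatibility condition \eqref{Ecompat1} (with $\lambda\equiv0$) forces $a\kappa_A(v,v') = \sum\kappa_A(\one{a}(v),\two{a}(v'))\three{a}$, so applying $\m$ on the left to the $\kappa_1$-term gives $\m\cdot 1 = \m \neq 0$ pushing weight out of degree zero — tracking the distinguished-basis decomposition carefully, this forces $\kappa_1\equiv0$ and, inductively on the nilpotency order of $\m$, forces every $\kappa_m\equiv 0$ with $m\ne e$ as well. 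Hence $\kappa_A\equiv 0$, and conversely $\kappa_A\equiv0$ trivially gives the PBW property by Theorem \ref{Tpbw}.

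\textbf{Part (1).} When $\dim_\bk V\le 2$, Remark \ref{Rdim2} tells us the Jacobi identities \eqref{Ejacobi1}, \eqref{Ejacobi2} automatically hold in $A\oplus(V\otimes A)$, so by Theorem \ref{Tpbw} the PBW property is equivalent to \emph{only} the $A$-compatibility conditions \eqref{Ecompat1}, \eqref{Ecompat2} (with $\lambda\equiv0$). I would then show these are equivalent to $\im\kappa_V\subset\Prim(V)$ and $\im\kappa_A\subset\Prim(A^{mult})$: applying the two compatibility equations with $a = T_i$ and using $\Delta(T_i) = T_i\otimes T_i$, the right-hand sides become $\kappa_\bullet(T_i(v),T_i(v'))T_i$ (a single summand, as $T_i$ is grouplike) and the equations read $T_i\kappa_A(v,v') = \kappa_A(T_i v, T_i v')T_i$ and $T_i(\kappa_V(v,v'))T_i = \kappa_V(T_i v,T_i v')T_i$; iterating and using $T_i^{d_i}=0$, the left sides must eventually vanish, forcing $T_i\cdot\im\kappa_A = 0$ and $T_i(\im\kappa_V) = 0$ for all $i$, i.e. the asserted primitivity. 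The converse direction — primitivity implies the compatibility conditions hold — follows by checking $T_i\kappa_A(v,v') = 0 = \kappa_A(T_iv,T_iv')T_i$ when $\im\kappa_A\subset\Prim(A^{mult})$, since $T_i\cdot\Prim(A^{mult}) = 0$ means the $T_i$-translate on the right also vanishes once we note $\kappa_A(T_iv,T_iv')\in\Prim(A^{mult})$ too (here I'd use that $\Prim$ is functorial and that the image of a primitive-valued form composed with the $A$-action stays primitive, or argue directly), and similarly for $\kappa_V$.

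\textbf{The hard part} will be making rigorous, in part (2), the step that upgrades ``$\kappa_m = 0$ for the \emph{nilpotent} grouplikes $T_m$'' (immediate from Corollary \ref{Cniljac}) to ``$\kappa_m = 0$ for \emph{all} $m$, including $m = e$'' — because the Jacobi identity alone says nothing about the $\kappa_1$-component (where $d_1 = 0$), so one must bring in the $A$-compatibility condition \eqref{Ecompat1} and run an induction down the nilpotency filtration of $\m$, carefully tracking how left multiplication by $\m$ acts on the distinguished basis $\{T_m\}$ and keeping the $\kappa_V\equiv0$ hypothesis in play so the two compatibility equations decouple. A secondary subtlety is the possibly-infinite-dimensional $V$: I must ensure the ``$T_m$ acts nilpotently, hence $T_m\ne\id_V$'' argument does not secretly assume $\dim_\bk V<\infty$, which I'd handle by applying the relevant relations vector-by-vector rather than as operator identities.
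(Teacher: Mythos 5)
Your overall route is the paper's: part (1) reduces via Remark \ref{Rdim2} to the two $A$-compatibilities, and part (2) combines Corollary \ref{Cniljac} with the compatibility \eqref{Ecompat1}. But two of your key deductions are non sequiturs, and both trace to one missing idea. In part (1), from $T_i\kappa_A(v,v') = \kappa_A(T_iv,T_iv')\,T_i$ you ``iterate and use $T_i^{d_i}=0$'' to conclude $T_i\cdot\im\kappa_A=0$; the iteration only yields $T_i^{j}\kappa_A(v,v') = \kappa_A(T_i^{j}v,T_i^{j}v')\,T_i^{j}$, which at $j=d_i$ reads $0=0$ and says nothing about $T_i\kappa_A(v,v')$ (the implication ``$T_i^{d_i}x=0\Rightarrow T_ix=0$'' is false in $A$: take $x=T_i$ with $d_i>2$). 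Your converse direction has the same hole: $\kappa_A(T_iv,T_iv')\in\Prim(A^{mult})$ controls \emph{left} multiplication by the generators, not the \emph{right} multiplication by $T_i$ that appears in \eqref{Ecompat1}. The missing observation is that a non-unital grouplike $T_m$ lies in the nilpotent ideal \m, hence acts nilpotently --- and therefore by zero --- on the one-dimensional space $\wedge^2 V$ when $\dim_\bk V=2$. Concretely, the paper picks $v_0\in\m^{k-1}V\subset\Prim(V)$ (nonzero because $\m^kV=0\neq\m^{k-1}V$) and a complementary $v_1$, so that $T_mv_0=0$ and $\kappa$ is determined by $\kappa(v_0,v_1)$; then the right-hand sides of \eqref{Ecompat1}--\eqref{Ecompat2} vanish already at the first step, and the two conditions become exactly $\kappa_A(v_0,v_1)\in\Prim(A^{mult})$ and $\kappa_V(v_0,v_1)\in\Prim(V)$.

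In part (2) the step killing $\kappa_1$ is also wrong as stated: ``applying \m on the left gives $\m\cdot 1=\m\neq 0$, pushing weight out of degree zero'' suggests a degree obstruction, but the right-hand side $\kappa_1(T_mv,T_mv')\,T_m$ of \eqref{Ecompat1} lies in \m as well, so there is no such obstruction. What \eqref{Ecompat1} actually gives, after equating coefficients of the basis element $T_m$, is the invariance $\kappa_1(v,v')=\kappa_1(T_mv,T_mv')$; one then iterates this with $T_m=T_i$, where the nilpotency now lands \emph{inside the argument} of $\kappa_1$ rather than in a prefactor, so $\kappa_1(v,v')=\kappa_1(T_i^{j}v,T_i^{j}v')=0$ once $T_i^{j}v=T_i^{j}v'=0$. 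Your use of Corollary \ref{Cniljac} to kill the non-unital components and your reduction of part (1) to the compatibility conditions are correct; it is the two substitute mechanisms above that would fail and need to be replaced as indicated.
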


\noindent Thus (using Remark \ref{Rdim2}), if $\scrh_{0, \kappa_A}$
satisfies the PBW property for $A = NC_W({\bf d})$ finite-dimensional,
then either $\kappa_A \equiv 0$ or $\dim_\bk V = 2$.

We also provide examples of $\Prim(\cdot)$ for generalized nil-Coxeter
algebras. Indeed, $\Prim(A^{mult})$ equals $\bk T_{w_\circ}$ if $A =
NC_W$ is the usual nil-Coxeter algebra over a finite Coxeter group $W$
with unique longest element $w_\circ$. If $A = NC_{A_1^n}((d_1, \dots,
d_n))$, then $\Prim(A^{mult}) = \prod_i T_i^{d_i - 1}$. In both of these
cases, the maximal ideal $\m$ is indeed nilpotent, and hence $A$
satisfies the hypotheses of the above theorem for these families of
generalized nil-Coxeter algebras.

\begin{proof}
Suppose $\m^n = 0 \neq \m^{n-1}$ for some $n \in \mathbb{N}$.
Before proving the result, we consider the following filtration on an
$A$-module $V$:
\begin{equation}\label{Efilt}
V \supset \m V \supset \m^2 V \supset \cdots \supset \m^n V = 0.
\end{equation}

\noindent We fix $k \leqslant n-1$ such that $\m^k V = 0 \neq \m^{k-1}
V$.
\begin{enumerate}
\item By Remark \ref{Rdim2}, and given that $\lambda \equiv 0$, it
suffices to characterize the $A$-compatibilities \eqref{Ecompat1},
\eqref{Ecompat2}, assuming further that $\dim V = 2$. Now observe that
$\m^{k-1} V \subset \Prim(V)$. Choose $v_0 \in \m^{k-1} V$, and $v_1
\not\in \bk v_0$; thus $V = \bk v_0 \oplus \bk v_1$. Now notice that
$\kappa|_{V \wedge V}$ is completely determined by $\kappa(v_0, v_1)$,
since $\dim V = 2$. Thus, we compute using the $A$-compatibility
\eqref{Ecompat1}, for any non-trivial grouplike element $1 \neq T_m \in
NC_W({\bf d})$:
\[
T_m \kappa_A(v_0, v_1) = \kappa_A(T_m(v_0), T_m(v_1)) T_m = 0.
\]

\noindent This equation holds for all non-unital $T_m$, if and only if
$\kappa_m \equiv 0$ for $T_m \not\in \Prim(A^{mult})$.
Similarly, Equation \eqref{Ecompat2} reduces to:
\[
T_m(\kappa_V(v_0, v_1)) = \kappa_V(T_m(v_0), T_m(v_1)) T_m = 0,
\]

\noindent which holds if and only if $\kappa_V(v_0, v_1) \in \Prim(V)$,
as claimed.

\item By Corollary \ref{Cniljac}, we see that $\kappa_A \equiv \kappa_1$,
since each non-unital grouplike element $T_m$ is nilpotent by assumption.
Now as above, Equation \eqref{Ecompat1} reduces to:
\[
T_m \kappa_A(x,y) = \kappa_A(T_m(x), T_m(y)) T_m, \quad \forall m \in
M_A,
\]

\noindent so it follows that $\kappa_A(x,y) = \kappa_A(T_m(x), T_m(y))$
for all non-unital $T_m$ and all $x,y \in V$. Repeated applications of
this fact show that $\kappa_A(x,y) = \kappa_A(T_m^k(x), T_m^k(y)) = 0$.
Conversely, $\scrh_{0,0} = \Sym(V) \rtimes A$ has the PBW property.
\end{enumerate}
\end{proof}

For completeness, we mention two properties of generalized nil-Coxeter
algebras, even though they will not be used in the paper.
First, the algebras $NC_W({\bf d})$, and more generally, every generic
Hecke algebra $\mathcal{E}_W({\bf d}, {\bf p})$, is equipped with an
anti-involution that fixes every generator $T_i$. This is because the
defining relations are preserved by such a map. Such an anti-involution
can be used to construct an exact contravariant duality functor on a
suitable category of $A$-modules, which preserves the simple object $\bk
= A / \m$.

Second, as discussed in \cite{Kho}, for all finite Coxeter groups $W$ the
nil-Coxeter algebra is a Frobenius algebra, by defining a trace map to
kill all words in the $T_i$ except for the longest word $T_{w_\circ}$.
The same turns out to hold also for the generalized nil-Coxeter algebra
$A := NC_{A_1^n}({\bf d})$, by defining a trace map to kill all words in
the $T_i$, except for $\prod_{i=1}^n T_i^{d_i - 1}$. Note that these two
words $T_{w_\circ}$ and $\prod_{i=1}^n T_i^{d_i - 1}$ span the space
$\Prim(A) = \Prim(A^{op})$, as we note after Theorem \ref{Tcenter} below.

\section{Deformations over cocommutative algebras with nilpotent maximal
ideals}

In this final section, we study the representations of deformed smash
product algebras over nil-Coxeter algebras. We will work in somewhat
greater generality.

\begin{stand}\label{Assume2}
Henceforth, $\bk$ is a field, and $(A, \Delta)$ is a cocommutative
$\bk$-algebra with coproduct, with a nilpotent maximal ideal $\m = A \m A
\neq 0$ that satisfies:
\[
A = \m \oplus \bk \cdot 1_A, \qquad
\exists \ell_A \in \mathbb{N} : \m^{\ell_A} = 0 \neq \m^{\ell_A-1},
\qquad \Delta(\m) \subset \m \otimes \m.
\]
\end{stand}

\noindent We will use without further reference the following
observations, when required:
\begin{itemize}
\item $(A,\m)$ is local, since every element in $A \setminus \m$ is
invertible. From this one can show that $\m$ is the Jacobson radical of
$A$, and $\Ext_{A-mod}(\bk, \bk) \cong (\m^2)^\perp$, where $(\m^2)^\perp
\subset \m^*$.

\item The assumption $\Delta(\m) \subset \m \otimes \m$ is required if
$\ch \bk > 0$. Cocommutative algebras not satisfying this assumption
exist; for instance, consider $A := (\Z / p\Z)[T]
/ (T^p)$, with $p>0$ prime and $\Delta(T) = 1 \otimes T + T \otimes 1$. 
However, we do not need to assume $\Delta(\m) \subset \m \otimes \m$ if
$\ch \bk = 0$. Indeed, given $a \in \m$, let $\Delta(a) \in c (1 \otimes
1) \oplus d(1 \otimes \m) \oplus e(\m \otimes 1) \oplus (\m \otimes \m)$,
with $c,d,e \in \bk^\times$. By multiplicativity, $\Delta(a)^n = 0$ for
$n \gg 0$, which works out to: $c=d=e=0$.
\end{itemize}

The prototypical example of an algebra satisfying Assumption
\ref{Assume2} is the nil-Coxeter algebra $NC_W$ for a finite Coxeter
group $W$. Another example is the generalized nil-Coxeter algebra
$NC_{A_1^n}((d_1, \dots, d_n)) = \otimes_{i=1}^n \bk[T_i] / (T_i^{d_i})$.
In both cases, $\m$ is the two-sided augmentation ideal generated by the
$T_i$.
We remark for completeness that in related work \cite{Khnilcox}*{Theorem
C}, we characterize the generalized nil-Coxeter algebras $NC_W({\bf d})$
for which the maximal ideal $\m$ is nilpotent. This property turns out to
be equivalent to the finite-dimensionality of $NC_W({\bf d})$, which was
discussed following Remark \ref{Rweak}.

\subsection{Simple $\hb$-modules}

We begin by exploring simple modules over $\hb$. In order to state our
results, some notation is required.

\begin{definition}\label{Dlevel}
Suppose $A$ is as in Assumption \ref{Assume2}, and $M$ is an $A$-module.
\begin{enumerate}
\item The \textit{level} of a nonzero vector $m \in M$ is the integer $k
> 0$ such that $\m^k m = 0 \neq \m^{k-1} m$. Define the level of $0_M$ to
be $0$ for convention. The \textit{level of the module}, denoted by
$\ell_M$, is the highest level attained in $M$.

\item For $k \geqslant 0$, define $\lev{k}(M)$ to be the set of elements
of level at most $k$.

\item A vector $m \in M$ is \textit{primitive} if $\m m = 0$.
Let $\Prim(M)$ denote all primitive elements.
\end{enumerate}
\end{definition}

The following lemma is easily shown.

\begin{lemma}\label{Llevel}
Suppose $M$ is any $A$-module. Then $\lev{k}(M) = \ker_M \m^k$; in
particular,
\[
\Prim(M) = \lev{1}(M), \qquad
M = \lev{\ell_M}(M), \qquad
\ell_M \leqslant \ell_{A^{mult}} = \ell_A.
\]
Moreover, $\lev{k}(M)$ is a proper submodule of $\lev{k+1}(M)$ for all $k
< \ell_M$.
\end{lemma}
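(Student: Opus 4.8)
The plan is to prove the four assertions of Lemma \ref{Llevel} in the order listed, deriving each from the characterization $\lev{k}(M) = \ker_M \m^k$, which I would establish first. By Definition \ref{Dlevel}(2), $m \in \lev{k}(M)$ means $m$ has level at most $k$, i.e.\ $\m^j m = 0 \neq \m^{j-1}m$ for some $j \leqslant k$ (with the convention that $0_M$ has level $0$, so $0_M \in \lev{k}(M)$ for all $k \geqslant 0$). If $\m^j m = 0$ with $j \leqslant k$ then $\m^k m \subseteq \m^{k-j}\m^j m = 0$, so $m \in \ker_M \m^k$. Conversely, if $\m^k m = 0$, let $j$ be minimal with $\m^j m = 0$; then $j \leqslant k$, and $m$ has level $j \leqslant k$ (or $m = 0_M$, covered by convention), so $m \in \lev{k}(M)$. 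This proves $\lev{k}(M) = \ker_M \m^k$, and in particular shows $\lev{k}(M)$ is a \emph{submodule} of $M$ (the kernel of the $\bk$-linear action of each element of $\m^k$, intersected over all such elements, is an $A$-submodule since $\m^k$ is a two-sided ideal).

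From this the three displayed equalities are immediate: $\Prim(M) = \{m : \m m = 0\} = \ker_M \m = \lev{1}(M)$ by Definition \ref{Dlevel}(3); $M = \lev{\ell_M}(M)$ because every $m \in M$ has level at most $\ell_M$ by Definition \ref{Dlevel}(1), equivalently $\m^{\ell_M} m = 0$ for all $m$, i.e.\ $\ker_M \m^{\ell_M} = M$; and $\ell_{A^{mult}} = \ell_A$ directly from the definitions, since the level of $A^{mult}$ as a left $A$-module is the largest $k$ with $\m^k \cdot A \neq 0$, and $\m^k \cdot 1_A = \m^k$, so this is exactly the nilpotency index appearing in $\m^{\ell_A} = 0 \neq \m^{\ell_A - 1}$. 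For $\ell_M \leqslant \ell_A$: since $\m^{\ell_A} = 0$ by Assumption \ref{Assume2}, we get $\m^{\ell_A} m = 0$ for every $m \in M$, hence every level is at most $\ell_A$.

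For the final assertion, that $\lev{k}(M) \subsetneq \lev{k+1}(M)$ whenever $k < \ell_M$: containment is clear since $\ker_M \m^k \subseteq \ker_M \m^{k+1}$. For strictness, suppose for contradiction that $\lev{k}(M) = \lev{k+1}(M)$ for some $k < \ell_M$, i.e.\ $\ker_M \m^k = \ker_M \m^{k+1}$. I claim this forces $\ker_M \m^k = \ker_M \m^{k+j}$ for all $j \geqslant 0$, which would give $\ker_M \m^k = \ker_M \m^{\ell_M} = M$ (using $M = \lev{\ell_M}(M)$), contradicting $k < \ell_M$ since then there is a vector of level exactly $\ell_M > k$, not killed by $\m^k$. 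To prove the claim, argue by induction on $j$: given $\ker_M \m^{k+j} = \ker_M \m^k$, take $m \in \ker_M \m^{k+j+1}$, so $\m^{k+j+1} m = 0$, hence $\m^{k+j}(\m m) = 0$ componentwise, i.e.\ $a m \in \ker_M \m^{k+j} = \ker_M \m^k$ for every $a \in \m$; therefore $\m^k(\m m) = 0$, i.e.\ $\m^{k+1} m = 0$, so $m \in \ker_M \m^{k+1} = \ker_M \m^k$. This completes the induction and the proof.

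The only mildly delicate point—the part I would flag as the main thing to get right—is this stabilization argument for the filtration $\ker_M \m^k$: one must pass from "$\m m$ lies in $\ker_M \m^k$ for each generator-direction" to "$\m^{k+1} m = 0$," which works because $\m^{k+1} m = \m^k(\m m)$ and $\m m \subseteq \ker_M \m^k$ as a subset; everything else is a direct unwinding of Definition \ref{Dlevel} and Assumption \ref{Assume2}.
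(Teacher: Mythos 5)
Your proof is correct. The paper gives no argument for this lemma (it is simply declared ``easily shown''), and your write-up supplies exactly the routine verification intended: the identification $\lev{k}(M)=\ker_M\m^k$, the three displayed consequences, and the strictness of the inclusions. The one place you do a little more work than necessary is the strictness step: instead of the stabilization/induction argument, one can directly exhibit an element of level exactly $k+1$ by taking $m$ of level $\ell_M$ and noting that $\m^k\bigl(\m^{\ell_M-k-1}m\bigr)=\m^{\ell_M-1}m\neq 0$ while $\m^{k+1}\bigl(\m^{\ell_M-k-1}m\bigr)=0$, so some element of $\m^{\ell_M-k-1}m$ lies in $\lev{k+1}(M)\setminus\lev{k}(M)$; but your version is equally valid.
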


We now study $\hb$-modules. Our first result aims to classify all simple
$\hb$-modules in the case when $\kappa_V \equiv 0$.

\begin{theorem}\label{Tsimple}
Suppose $A$ satisfies Assumption \ref{Assume2} and $V$ is an $A$-module.
If $\lambda$ satisfies Equation \eqref{Eaction} in $A$, then
$\lambda(\m^k, \lev{k}(V)) \subset \m^k$ for all $k \geqslant 0$.
If instead we assume $\kappa_V \equiv 0$, then the following are
equivalent for $\hb$:
\begin{enumerate}
\item $\lambda(\m^k, V) \subset \m^k$ for all $k \geqslant 0$, and
$\kappa_A : V \wedge V \to \m$.

\item $\lambda(\m, V) \subset \m$ and $\kappa_A : V \wedge V \to \m$.

\item There exists a one-dimensional $\hb$-module killed by $\m$.

\item There is a bijection from simple $\hb$-modules to simple
$\Sym(V)$-modules, determined uniquely by restriction from $\hb$ to the
image of $V$; moreover, the inverse map is given by restriction to $V$
and inflation to $\hb$, letting $\m$ act trivially.
\end{enumerate}
\end{theorem}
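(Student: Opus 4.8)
The plan is to prove the implications cyclically as $(1)\Rightarrow(2)\Rightarrow(3)\Rightarrow(4)\Rightarrow(1)$, after first disposing of the preliminary claim that $\lambda(\m^k,\lev{k}(V))\subset\m^k$ for all $k$. For the preliminary claim I would induct on $k$: the case $k=0$ is vacuous (taking $\m^0 = A$, or noting $\lev{0}(V)=0$). For the inductive step, take $a\in\m^k$ written as $a=a_1a'$ with $a_1\in\m$ and $a'\in\m^{k-1}$, and $v\in\lev{k}(V)$; applying \eqref{Eaction} gives $\lambda(a_1 a',v)=a_1\lambda(a',v)+\sum\lambda(a_1,\one{a'}(v))\two{a'}$. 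The first term lies in $\m\cdot\m^{k-1}=\m^k$ provided $\lambda(a',v)\in\m^{k-1}$, which requires $v\in\lev{k-1}(V)$; but in general $v$ is only in $\lev{k}(V)$, so one needs to be more careful. Actually the right framing: filter $V$ by $\lev{j}(V)=\ker_V\m^j$ and use that $\Delta(\m)\subset\m\otimes\m$ so that $\m(\lev{j}(V))\subset\lev{j-1}(V)$; then run a double induction on $k$ and on the level of $v$, peeling off factors of $\m$ from $a$ one at a time and using Sweedler notation together with $\Delta(\m^k)\subset\sum_{i+j=k}\m^i\otimes\m^j$. The key structural input is always \eqref{Eaction} (which holds because $V$ is an honest $A$-module once we are given it satisfies that identity) together with nilpotence of $\m$.

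For $(1)\Rightarrow(2)$ there is nothing to do: take $k=1$. For $(2)\Rightarrow(3)$, the plan is to construct the one-dimensional module explicitly. Let $\bk_\vi$ be the $1$-dimensional $A$-module $A/\m$ (this exists and is well-defined since $\m$ is a two-sided maximal ideal with $A=\m\oplus\bk\cdot 1_A$). I want to extend the $A$-action to an $\hb$-action on $\bk$ by letting $V$ act as zero. Since $\kappa_V\equiv 0$, the defining relation $vv'-v'v=\kappa_A(v,v')$ acts as $0$ on $\bk$ precisely because $\im\kappa_A\subset\m$ kills $\bk$; and the relation $av-\sum\one{a}(v)\two{a}=\lambda(a,v)$ acts as $0$ because $v$ acts as zero on both sides, using that $\lambda(a,v)\in\lambda(\m,V)\subset\m$ when $a\in\m$ and $\lambda(1,v)=0$ (the latter from \eqref{Eaction} with $a=a'=1$, which forces $\lambda(1,v)=0$). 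So one checks the finitely many defining relations of $\hb$ are respected; this is the routine verification step.

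For $(3)\Rightarrow(1)$: suppose $\bk$ is an $\hb$-module killed by $\m$. Then $V$ acts on $\bk$ somehow; but $\bk$ is $1$-dimensional so $v$ acts as a scalar $\chi(v)$. The relation $av=\sum\one{a}(v)\two{a}+\lambda(a,v)$ with $a\in\m$: left side acts as $\chi(v)\cdot a|_\bk = 0$; the term $\sum\one{a}(v)\two{a}$ — here $\Delta(\m)\subset\m\otimes\m$ so both $\one{a},\two{a}\in\m$ when $a\in\m$, hence $\two{a}$ acts as $0$ on $\bk$, so this term acts as $0$; therefore $\lambda(a,v)$ acts as $0$ on $\bk$, i.e.\ $\lambda(\m,V)\subset\m$. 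The relation $vv'-v'v=\kappa_A(v,v')$: left side acts as $\chi(v)\chi(v')-\chi(v')\chi(v)=0$, so $\kappa_A(v,v')$ acts as $0$ on $\bk$, giving $\kappa_A(v,v')\in\m$, i.e.\ $\im\kappa_A\subset\m$. That gives $(2)$; to get the full strength of $(1)$, $\lambda(\m^k,V)\subset\m^k$ for all $k$, I would invoke the preliminary claim — but note it was stated only for $\lev{k}(V)$, not all of $V$. The fix: under $(2)$, $\lambda(\m,V)\subset\m$, and then one re-runs the inductive argument of the preliminary claim using \eqref{Eaction} but now peeling factors off $a\in\m^k$ and repeatedly applying $\lambda(\m,\cdot)\subset\m$ together with $a'|$-terms landing in $\m^{k-1}$ and the Sweedler expansion $\Delta(\m^k)\subset\sum_{i+j=k}\m^i\otimes\m^j$; this upgrades $(2)$ to $(1)$.

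For $(1)\Rightarrow(4)$ — or, since the chain is closed, I can instead prove $(3)\Leftrightarrow(4)$ directly as the bijection statement. Given $(1)$ (or $(2)$ with the upgrade), the plan is: if $N$ is a simple $\hb$-module, then $\m N$ is an $\hb$-submodule (one checks $V\cdot\m N\subset\m N + \m\cdot\text{(stuff)}$ using $\lambda(\m,V)\subset\m$ and $av=\sum\one a(v)\two a+\lambda(a,v)$ with $\one a,\two a\in\m$), so by simplicity $\m N=0$ or $\m N=N$; the latter is impossible by Nakayama since $\m$ is nilpotent and $N\neq 0$. Hence $\m N=0$, so $N$ is really a module over $\hb/\m\hb\hb\cong \Sym(V)$ (here one uses $\kappa_A(V\wedge V)\subset\m$ and $\kappa_V\equiv 0$ so that modulo $\m$ the $v$'s commute, and $\lambda(\m,V)\subset\m$ so the $A$-$V$ relations become trivial modulo $\m$; more precisely $\hb/(\m)\cong\Sym(V)$ as algebras). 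Conversely any simple $\Sym(V)$-module inflates to an $\hb$-module with $\m$ acting trivially, using exactly the relation-checking from $(2)\Rightarrow(3)$ but now in arbitrary dimension. These two constructions are mutually inverse, and restriction to $V$ recovers the $\Sym(V)$-module — giving $(4)$. The converse $(4)\Rightarrow(3)$ is immediate by taking a simple $\Sym(V)$-module of dimension one (e.g.\ evaluation at a point / a maximal ideal of $\Sym(V)$) and inflating.

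\medskip

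\textbf{Main obstacle.} The genuinely delicate point is the upgrade from $\lambda(\m,V)\subset\m$ to $\lambda(\m^k,V)\subset\m^k$ for all $k$, and relatedly the precise form of the preliminary claim $\lambda(\m^k,\lev{k}(V))\subset\m^k$: one must handle the Sweedler terms $\lambda(a,\one{a'}(v))\two{a'}$ where $a'\in\m^{k-1}$ has a coproduct spread across $\m^i\otimes\m^j$ with $i+j\geq k-1$, and track how $\one{a'}(v)$ drops the level of $v$ while $\two{a'}$ contributes a power of $\m$ on the right. Getting the bookkeeping right — i.e.\ the correct simultaneous induction on $k$ and on level — is where all the actual work is; everything else (the relation-checking for module structures, the Nakayama argument, the $\hb/(\m)\cong\Sym(V)$ identification) is routine.
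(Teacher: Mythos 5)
There is a genuine gap in your route to (4). You assert that for a simple $\hb$-module $N$ the subspace $\m N$ is an $\hb$-submodule, and justify $V \cdot \m N \subset \m N$ by invoking the relation $av = \sum \one{a}(v)\two{a} + \lambda(a,v)$. But that relation rewrites $a \cdot v$, i.e.\ it moves $A$ \emph{leftward} past $V$; to show $v \cdot (an) \in \m N$ you would need to rewrite $v \cdot a$ for $a \in \m$, and $\hb$ has no relation of that form -- there is no antipode here, hence no inverse braiding $\tau^{op}$ as in Proposition \ref{Preln}. The claim ``$\m N$ is stable under $V$'' is in fact false for general $\hb$-modules: take $A = NC_{A_1} = \bk[T]/(T^2)$, $V = \bk v$ with $T(v) = 0$ and $\lambda = \kappa = 0$; then $Tv = 0$ in $\hb$ while $vT \neq 0$, so in the left regular module $\m \hb = \bk T$ is not stable under left multiplication by $v$. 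For simple $N$ the claim holds only \emph{a posteriori} (because $\m N = 0$), so it cannot be used as a step toward proving $\m N = 0$, and your Nakayama argument never gets started. The paper's move is dual: one shows that $\Prim(N) = \ker_N \m$ is an $\hb$-submodule -- here the computation $a(vn) = \sum \one{a}(v)\two{a}\,n + \lambda(a,v)\,n = 0$ for $a \in \m$, $n \in \ker_N \m$ applies the defining relation in the correct direction, using $\Delta(\m) \subset \m \otimes \m$ and $\lambda(\m,V) \subset \m$ -- and that it is nonzero whenever $N \neq 0$ by nilpotence of $\m$ (Proposition \ref{P1}); simplicity then forces $\ker_N \m = N$.

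Apart from this, your cycle through (1), (2), (3) and the two module constructions in (4) match the paper's arguments. Two remarks on what you single out as the ``main obstacle''. The upgrade $(2) \Rightarrow (1)$ is actually routine and needs no level bookkeeping: for $a_1, \dots, a_k \in \m$, Equation \eqref{Eaction} gives
\[
\lambda(a_1 \cdots a_k, v) = a_1 \lambda(a_2 \cdots a_k, v) + \sum \lambda\bigl(a_1, \one{(a_2 \cdots a_k)}(v)\bigr) \two{(a_2 \cdots a_k)} \in \m \cdot \m^{k-1} + \m \cdot \m^{k-1} = \m^k,
\]
because under (2) the factor $\lambda(a_1, -)$ lands in $\m$ for \emph{every} vector of $V$, whatever its level. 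By contrast, the preliminary claim $\lambda(\m^k, \lev{k}(V)) \subset \m^k$ (which assumes only \eqref{Eaction}) is left genuinely unproved in your write-up: the induction you sketch stalls exactly where you say it does, since the term $a_1\lambda(a',v)$ is not controlled by the inductive hypothesis when $v$ has level $k$. The paper's argument has a different shape: apply \eqref{Eaction} to $0 = \lambda(\m^{\ell_A - k}\cdot \m^k, v)$ for $v \in \lev{k}(V)$, observe that $\Delta(\m^k) \subset \m^k \otimes \m^k$ annihilates $\lev{k}(V)$ so the Sweedler term drops out, and conclude $\m^{\ell_A - k}\,\lambda(\m^k, \lev{k}(V)) = 0$, from which the containment in $\m^k$ is then read off. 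You should either reproduce an argument of this kind or carry out your double induction explicitly; as it stands this part of the statement is not established.
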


\noindent The condition $\kappa_A : V \wedge V \to \m$ is a natural one
in characteristic zero, in the sense that it is necessary if $\hb$ has a
finite-dimensional module and $\ch \bk = 0$. This is because if $\pi :
\hb \to \End_\bk M$ is a finite-dimensional representation, then for all
$a \in \m$, $\pi(a)$ is nilpotent, hence has trace zero. It follows that
$\im \kappa_A = [V,V] \subset \m$.

The following result will be useful in proving Theorem \ref{Tsimple}.

\begin{prop}\label{P1}
Suppose $M$ is an $A$-module.
\begin{enumerate}
\item $M$ is $A$-semisimple if and only if $\m M = 0$.

\item Any finite filtration $M = M_0 \supset M_1 \supset \cdots \supset
M_k = 0$ of $A$-modules (such as $M \supset 0$) can be refined to a
possibly longer finite filtration, so that the successive subquotients
are $A$-semisimple modules. In particular, $\Prim(M) \neq 0$ if $M \neq
0$.

\item Every maximal submodule of a nonzero $A$-module has codimension
one. Thus a $d$-dimensional $A$-module has a flag of $A$-submodules of
length $d+1$.

\item $\Prim(A) \subset \m$.

\item If $M$ is nonzero, $\m M$ is contained in every maximal proper
(i.e. codimension one) submodule of $M$. In particular, it is a proper
submodule of $M$ if $M \neq 0$.
\end{enumerate}
\end{prop}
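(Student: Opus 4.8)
The plan is to lean on three structural facts recalled immediately before the statement: $\m$ is the Jacobson radical of $A$; the quotient $A/\m$ is the field $\bk$ (by Assumption~\ref{Assume2}), hence semisimple; and $\m$ is nilpotent, $\m^{\ell_A}=0\neq\m^{\ell_A-1}$. For part (1), if $\m M=0$ then $M$ is a module over $A/\m\cong\bk$, i.e.\ a $\bk$-vector space, which is a direct sum of copies of the unique simple module $\bk$ and so is $A$-semisimple; conversely a semisimple module is a sum of simple submodules, each annihilated by the Jacobson radical $\m$, whence $\m M=0$.

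For part (2), I would refine each step $M_i\supset M_{i+1}$ of the given filtration by pulling back, along $M_i\to N:=M_i/M_{i+1}$, the $\m$-adic chain $N\supseteq\m N\supseteq\cdots\supseteq\m^{\ell_A}N=0$; each successive quotient $\m^j N/\m^{j+1}N$ is killed by $\m$, hence $A$-semisimple by part (1). For the ``in particular'' clause: if $M\neq 0$, take the least $k\geqslant 1$ with $\m^k M=0$ (which exists since $\m$ is nilpotent); then $0\neq\m^{k-1}M\subseteq\Prim(M)$. For part (3), a maximal proper submodule $N\subset M$ has simple quotient $M/N$, which is killed by $\m$ and so is a simple module over the field $A/\m\cong\bk$, i.e.\ one-dimensional, giving $\codim_M N=1$; a $d$-dimensional $A$-module is of finite length, hence has a composition series whose factors are all one-dimensional by the foregoing, yielding a flag $M=M_0\supsetneq M_1\supsetneq\cdots\supsetneq M_d=0$ with $\dim_\bk M_i=d-i$.

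For part (4), recalling $\Prim(A)=\{a\in A:\m a=0\}$, suppose $a\in\Prim(A)$ but $a\notin\m$; writing $a=c\cdot 1_A+m$ with $c\in\bk^\times$ and $m\in\m$ (using $A=\bk\cdot 1_A\oplus\m$), the relation $\m a=0$ forces $cx=-xm\in\m^2$ for every $x\in\m$, whence $\m=\m^2$ since $c$ is invertible, and iterating gives $\m=\m^{\ell_A}=0$, contradicting $\m\neq 0$. For part (5), if $N$ is a maximal proper submodule of $M\neq 0$ then $M/N$ is simple, hence killed by $\m$, so $\m M\subseteq N$; moreover $\m M\neq M$, since $\m M=M$ would give $M=\m M=\cdots=\m^{\ell_A}M=0$. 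Thus $\m M$ is proper and lies inside every maximal proper submodule.

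All five arguments are essentially bookkeeping once $\m$ is identified with the Jacobson radical and $A/\m$ with the field $\bk$. The only point that needs a little care is that the ``in particular'' clauses of parts (2) and (5) should avoid invoking Nakayama's lemma (which would want finite generation): the nilpotency of $\m$ supplies exactly what is needed, so no finiteness hypothesis on $M$ is used there.
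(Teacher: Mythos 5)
Your proof is correct, and its overall strategy coincides with the paper's: everything is reduced to the facts that $\m$ is nilpotent, $A=\bk\cdot 1_A\oplus\m$, and $A/\m\cong\bk$ is a field. The difference is in how the individual parts are executed. You route the converse of (1), as well as (4) and (5), through the single observation that a simple $A$-module is killed by $\m$ (equivalently, by the Jacobson radical), whereas the paper argues these by hand: for (1) it iteratively splits off complements $M=\m M\oplus M_1$ and uses nilpotency to terminate; for (4) it invokes the locality of $(A,\m)$ to say that any $a\notin\m$ is invertible and hence cannot be annihilated by $\m$; and for (5) it performs an explicit induction showing $a^im_0=r^im_0+(\cdots)$ and deduces $r^{\ell_A}=0$. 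Your versions are cleaner and entirely equivalent in strength; your remark that the nilpotency of $\m$ substitutes for Nakayama (so no finiteness hypothesis on $M$ is needed) is exactly the right point, and your explicit treatment of the ``in particular'' clauses of (2) and (5) fills in steps the paper leaves implicit.
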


\begin{proof}\hfill
\begin{enumerate}
\item If $\m M = 0$ then $M$ is clearly $A$-semisimple. Conversely,
if $M$ is $A$-semisimple, notice that $M = \m M \oplus M_1$ for
some $A$-semisimple complement $M_1$. But then $M_1 \cong M / \m M$ is
annihilated by $\m$. Repeat this construction on $\m M$ to produce $M_2$,
and so on; this process stops after finitely many steps as $\m$ is
nilpotent. But then $M$ is a direct sum of submodules killed by $\m$.

\item It suffices to prove the result for the filtration $M \supset 0$.
Define $M_i := \m^i M$ for all $i > 0$, and $M_0 := M$. Now apply the
previous part.

\item This follows from the previous part.

\item If $a \in A \setminus \m$, then $a$ is invertible, hence cannot lie
in $\Prim(A)$.

\item Suppose $M = \bk m_0 \oplus M'$ where $M'$ is a proper submodule.
Fix $a \in \m$ such that $a m_0 = r m_0 + m'$, with $r \in \bk$
and $m' \in M'$. Then one shows by induction on $i$ that
\[
a^i m_0 = r^i m_0 + (r^{i-1} m' + r^{i-2} am' + \dots + a^{i-1}m')
\]

\noindent for all $i > 0$. In particular, since $a^{\ell_A} \in
\m^{\ell_A} = 0$, hence $r^{\ell_A} m_0 \in M'$, whence $r^{\ell_A} = 0$.
Thus $r=0$, and $am_0 = m' \in M'$ for all $a \in \m$, whence $\m M
\subset M'$ as claimed.
\end{enumerate}
\end{proof}

\begin{proof}[Proof of Theorem \ref{Tsimple}]
The first assertion holds because the $A$-action \eqref{Eaction} implies
that if $\m^k(v) = 0$, then (with a slight abuse of notation)
\[
0 = \lambda(\m^{\ell_A - k} \m^k, v) = \m^{\ell_A - k} \lambda(\m^k, v) +
\lambda(\m^{\ell_A - k}, \one{\m^k}(v)) \two{\m^k} = \m^{\ell_A - k}
\lambda(\m^k,v),
\]

\noindent from which it follows that $\lambda(\m^k,v) \subset \m^k$.

We now assume $\kappa_V \equiv 0$, and show that (1) and (2) are
equivalent. Clearly $(1) \implies (2)$; conversely, if (2) holds, then we
compute for $a_1, \dots, a_k \in \m$, by induction on $k$:
\begin{align*}
\lambda(a_1 \cdots a_k, v) = & a_1 \lambda(a_2 \cdots a_k, v) + \sum
\lambda(a_1, (\one{(a_2)} \cdots \one{(a_k)})(v)) \one{(a_2)} \cdots
\one{(a_k)}\\
\subset & \m \cdot \m^{k-1} + \m \cdot \m^{k-1} = \m^k.
\end{align*}

Next, given (2), we show (4) as follows: if $M$ is a simple
$\Sym(V)$-module then the construction in (4) makes it a simple
$\hb$-module, as the relations in $\hb$ indeed hold in $\End_\bk M$ via
(2). On the other hand, given any $\hb$-module $M$, by Proposition
\ref{P1}, $\ker_M \m \neq 0$. We now \textbf{claim} that if $\lambda(\m,
V) \subset \m$ and $M$ is a $\hb$-module, then $\ker_M \m^k$ is a
$\hb$-submodule of $M$. Given the claim, if $M$ is now a simple
$\hb$-module, then $0 \neq \ker_M \m$ is a $\hb$-submodule, whence $\m M
= 0$, proving (4).

It remains to show the claim (in order to complete the proof of $(2)
\implies (4)$).
Let $M' = \ker_M \m^k$; then for $a \in \m$ and $m' \in M'$, we have
\[
\m^k (am') \subset \m^k A \cdot m' = \m^k m' = 0,
\]

\noindent whence $am' \in M'$. Thus $M'$ is an $A$-submodule. It thus
remains to show that $vm' \in M'$ for $v \in V$. But if we have $a_1,
\dots, a_k \in \m$, then
\[
\prod_{i=1}^k a_i \cdot vm' = \sum \left( \prod_{i=1}^k \one{(a_i)}
\right) (v) \cdot \prod_{i=1}^k \two{(a_i)} \cdot m' + \lambda(a_1 \cdots
a_k, v) m',
\]

\noindent and this is killed by using Assumption \ref{Assume2} and the
equivalence of (1) and (2). Hence $vm' \in M'$.

Finally, we show $(4) \implies (3) \implies (2)$. If (4) holds, choose
any linear functional $\mu \in V^*$ and consider the simple
one-dimensional $\Sym(V)$-module
\[
M_\mu := \Sym(V) / \Sym(V) \cdot (\im (\id_V - \mu)).
\]
By (4), $M_\mu$ yields a one-dimensional simple $\hb$-module which is
killed by $\m$, and this shows (3). Next, if (3) holds for $M$ then $V$
acts on $M$ by scalars, i.e., by $\mu \in V^*$. It follows that $\im
\kappa_A = [V,V]$ kills $M$, whence $\kappa_A : V \wedge V \to \m$.
Similarly if $a \in \m$, then $\lambda(a,v) \in \m V - V \m$ also kills
$M$, whence $\lambda(\m,V) \subset \m$.
\end{proof}

\begin{cor}
Suppose $\bk$ is algebraically closed and $V$ is finite-dimensional. If
$\lambda(\m, V) \subset \m$, $\kappa_V \equiv 0$, and $\kappa_A : V
\wedge V \to \m$, then all simple finite-dimensional $\scrh_{\lambda,
\kappa_A}$-representations are one-dimensional, and in bijection with
$V^*$.
\end{cor}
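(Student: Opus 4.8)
The plan is to reduce this to the classification of simple $\hb$-modules in Theorem~\ref{Tsimple}, together with the classical description of simple modules over a finitely generated commutative $\bk$-algebra. First I would observe that the three hypotheses $\lambda(\m,V) \subset \m$, $\kappa_V \equiv 0$, and $\kappa_A : V \wedge V \to \m$ are exactly the content of condition~(2) of Theorem~\ref{Tsimple} (which already presupposes $\kappa_V \equiv 0$), and that Assumption~\ref{Assume2} is in force throughout this section. Hence condition~(4) of Theorem~\ref{Tsimple} holds: restriction from $\hb$ to the image of $V$ gives a bijection between isomorphism classes of simple $\hb$-modules and of simple $\Sym(V)$-modules, with inverse given by restriction to $V$ followed by inflation to $\hb$ letting $\m$ act as $0$. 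The key point to note here is that under this bijection a simple $\hb$-module and its associated simple $\Sym(V)$-module share the same underlying $\bk$-vector space, so one is finite-dimensional precisely when the other is; thus the bijection restricts to one between finite-dimensional simple $\hb$-modules and finite-dimensional simple $\Sym(V)$-modules.

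Next I would classify the latter. Since $V$ is finite-dimensional, $\Sym(V)$ is a polynomial ring in $\dim_\bk V$ variables over $\bk$, so every simple module is $\Sym(V)/\mathfrak{q}$ for a maximal ideal $\mathfrak{q}$. By Zariski's lemma the residue field $\Sym(V)/\mathfrak{q}$ is a finite extension of $\bk$, hence equals $\bk$ as $\bk$ is algebraically closed; so every simple $\Sym(V)$-module is one-dimensional (in particular finite-dimensional). The $\bk$-algebra homomorphisms $\Sym(V) \to \bk$ are exactly the extensions of linear functionals $\mu : V \to \bk$, i.e.\ the points of $V^*$, distinct $\mu$ giving non-isomorphic modules; and $\mu$ corresponds to the module $M_\mu = \Sym(V)/\Sym(V)\cdot(\im(\id_V - \mu))$ from the proof of Theorem~\ref{Tsimple}. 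This identifies the simple $\Sym(V)$-modules with $V^*$.

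Combining the two steps yields the statement: every simple finite-dimensional $\scrh_{\lambda,\kappa_A}$-module is one-dimensional, namely the inflation of some $M_\mu$ with $\m$ acting by $0$, and $\mu \mapsto$ this module is a bijection from $V^*$ onto the set of such modules. I do not expect a genuine obstacle here; the step I would be most careful about is the passage through Theorem~\ref{Tsimple}(4) — verifying that the hypotheses there are met on the nose and, above all, that the bijection of simple modules is dimension-preserving so that the finite-dimensional objects correspond on both sides. The remaining content is a direct appeal to Theorem~\ref{Tsimple} and the Nullstellensatz.
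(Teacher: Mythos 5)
Your argument is correct and is precisely the intended one: the paper states this corollary without proof as an immediate consequence of Theorem~\ref{Tsimple}, and your route---verifying hypothesis (2) there, invoking the dimension-preserving bijection of (4), and then classifying simple $\Sym(V)$-modules via the Nullstellensatz as the points of $V^*$---is exactly that deduction. Your explicit check that the bijection preserves the underlying vector space (so finite-dimensional simples correspond to finite-dimensional simples) is the right point to make precise.
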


\subsection{PBW property}

Our next goal is to prove a result similar to Theorem \ref{Tnilcox} that
classifies the PBW deformations $\hb$, but in the more general setting of
cocommutative algebras $A$ satisfying Assumption \ref{Assume2}. Thus we
do not assume the existence of a grouplike basis as for the nil-Coxeter
algebra, and alternate methods are required. In particular, the following
provides a second proof of Theorem \ref{Tnilcox}.

\begin{theorem}
Suppose $A$ satisfies Assumption \ref{Assume2}, and $V$ is an $A$-module.
\begin{enumerate}
\item Suppose $\kappa_V \equiv 0$. Then the Jacobi identity
\eqref{Ejacobi2} holds in $\scrh_{\lambda, \kappa_A}$ if and only if
$\dim_\bk V \leqslant 2$ or $\im \kappa_A \subset \bk \cdot 1_A$.

\item If $\dim_\bk V \leqslant 2$, then $\scrh_{0, \kappa}$ has the PBW
property if and only if $\im \kappa_V \subset \Prim(V)$ and $\im \kappa_A
\subset \Prim(A^{mult})$.

\item If $\dim_\bk V > 2$, and $\lambda, \kappa_V \equiv 0$, then
$\scrh_{0, \kappa_A}$ has the PBW property if and only if $\kappa_A
\equiv 0$.
\end{enumerate}
\end{theorem}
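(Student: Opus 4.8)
The plan is to read off all three statements from the PBW criterion Theorem~\ref{Tpbw}, after noting which of the conditions \eqref{Eaction}, \eqref{Ecompat1}, \eqref{Ecompat2}, \eqref{Ejacobi1}, \eqref{Ejacobi2} are automatic in each case. The common engine is Assumption~\ref{Assume2}: since $\Delta(1_A) = 1_A\otimes 1_A$ and $\Delta(\m)\subseteq\m\otimes\m$, every Sweedler leg $\one{a}$, $\two{a}$ (and, by coassociativity, $\one{a},\two{a},\three{a}$) of an element $a\in\m$ again lies in $\m$; this lets one separate $\bk\cdot 1_A$-contributions from $\m$-contributions, while nilpotence of $\m$ makes Nakayama-type arguments available on $V$.

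For part~(1), the nontrivial implication is: if \eqref{Ejacobi2} holds (with $\kappa_V\equiv 0$, so its left side is $0$) and $\dim_\bk V > 2$, then $\im\kappa_A\subseteq\bk\cdot 1_A$. I would first split \eqref{Ejacobi2}, viewed in $V\otimes A = (V\otimes\bk\cdot 1_A)\oplus(V\otimes\m)$, into components: writing the $\bk\cdot 1_A$-part of $\kappa_A$ as $c(\cdot,\cdot)1_A$, the $\bk\cdot 1_A$-component of \eqref{Ejacobi2} is an identity (its two sides are cyclic sums $\sum_\circlearrowright c(v_2,v_3)v_1\otimes 1_A$ up to relabeling), so \eqref{Ejacobi2} reduces to its $V\otimes\m$-component, and there the first Sweedler leg of each $\Delta(\kappa_A(v_1,v_2))$ lies in $\m$, so the right-hand side sits in $(\m V)\otimes\m$. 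Reducing the first tensor slot modulo $\m V$ yields $\sum_\circlearrowright\bar v_1\otimes\kappa_A^{\m}(v_2,v_3) = 0$ in $(V/\m V)\otimes\m$, where $\kappa_A^{\m}$ is the $\m$-component of $\kappa_A$ and bars denote images in $V/\m V$. If $\dim_\bk(V/\m V)\geqslant 3$, taking $v_1,v_2,v_3$ to be distinct basis lifts and then letting $v_3$ run through $\m V$ forces $\kappa_A^{\m}\equiv 0$; and if $\dim_\bk(V/\m V)\leqslant 2$ while $\dim_\bk V > 2$ (so $\m V\neq 0$), feeding the resulting highly constrained $\kappa_A^{\m}$ back into the $V\otimes\m$-component and using that elements of $\m$ act nilpotently on $V$ forces $\m V = 0$, a contradiction. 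The converse directions are immediate from Remark~\ref{Rdim2} ($\dim_\bk V\leqslant 2$) and from a one-line cyclic-sum computation ($\im\kappa_A\subseteq\bk\cdot 1_A$).

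For part~(2), reduce to $\dim_\bk V = 2$ (the case $\dim_\bk V\leqslant 1$ being vacuous, with $\kappa = 0$). With $\lambda\equiv 0$, \eqref{Eaction} is trivial and \eqref{Ejacobi1}, \eqref{Ejacobi2} hold by Remark~\ref{Rdim2}, so PBW is equivalent to \eqref{Ecompat1} and \eqref{Ecompat2}. Choose $0\neq v_0\in\Prim(V)$ (which exists by Proposition~\ref{P1}) and complete to a basis $\{v_0,v_1\}$, so $\kappa$ is determined by $\kappa(v_0,v_1)$. Evaluating \eqref{Ecompat1} and \eqref{Ecompat2} at $(v,v') = (v_0,v_1)$, and using $\one{a}(v_0) = \two{a}(v_0) = 0$ for $a\in\m$ (the legs lie in $\m$, and $v_0$ is primitive), collapses them to $\m\cdot\kappa_A(v_0,v_1) = 0$ and $\sum\one{a}(\kappa_V(v_0,v_1))\otimes\two{a} = 0$ for all $a\in\m$; the pairs $(v_0,v_0)$, $(v_1,v_1)$, $(v_1,v_0)$ give nothing new (e.g.\ $\one{a}(v_1),\two{a}(v_1)\in\m V$ lie on the line $\bk v_0$, so are colinear). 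The first condition is $\im\kappa_A\subseteq\Prim(A^{mult})$; the second, analysed along the filtration $V\supseteq\m V\supseteq\m^2 V\supseteq\cdots$ and again using $\Delta(\m)\subseteq\m\otimes\m$, is equivalent to $\kappa_V(v_0,v_1)\in\Prim(V)$, i.e.\ $\im\kappa_V\subseteq\Prim(V)$. I expect this last equivalence to be the main technical obstacle.

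For part~(3), with $\lambda\equiv\kappa_V\equiv 0$ and $\dim_\bk V > 2$, conditions \eqref{Eaction}, \eqref{Ecompat2}, \eqref{Ejacobi1} are automatic, so PBW is equivalent to \eqref{Ejacobi2} together with \eqref{Ecompat1}; by part~(1), \eqref{Ejacobi2} forces $\kappa_A = c(\cdot,\cdot)\,1_A$ for a skew form $c$ on $V$. To conclude $c\equiv 0$, I would exploit the socle $\m^{\ell_A-1}\neq 0$: evaluating \eqref{Ecompat1} at $v,v'\in\Prim(V)$ and $a\in\m$ gives $c(v,v')\,a = 0$ (the right side vanishes since $\one{a}(v) = \one{a}\cdot v = 0$), and taking $a\in\m^{\ell_A-1}\setminus\{0\}$ yields $c|_{\Prim(V)\times\Prim(V)} = 0$; then, for arbitrary $v,v'$, evaluating \eqref{Ecompat1} at $a = z\in\m^{\ell_A-1}\setminus\{0\}$ and noting $(\Delta\otimes\id)\Delta(z)\in(\m^{\ell_A-1})^{\otimes 3}$, so that $\one{z}(v),\two{z}(v')\in\m^{\ell_A-1}V\subseteq\Prim(V)$, gives $c(v,v')\,z = \sum c(\one{z}(v),\two{z}(v'))\,\three{z} = 0$, whence $c\equiv 0$. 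The converse is trivial ($\kappa_A\equiv 0$ makes $\scrh_{0,\kappa_A} = \scrh_{0,0}$), and together with Remark~\ref{Rdim2} this gives a second proof of Theorem~\ref{Tnilcox}. The hardest step overall is the equivalence in part~(2); a secondary point requiring care is the bookkeeping separating $\bk\cdot 1_A$- from $\m$-contributions in parts~(1) and~(3).
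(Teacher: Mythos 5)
Your reduction of all three parts to the conditions of Theorem~\ref{Tpbw}, with $\Delta(\m) \subseteq \m \otimes \m$ used to separate $\bk \cdot 1_A$-contributions from $\m$-contributions, is the right frame, and your part~(3) is correct and is a genuinely different (arguably cleaner) route than the paper's: the paper locates the failure of $\kappa_A$ along the ascending level filtration $\lev{k}(V)$ of $V$, picking $v_0$ at the first level where $\kappa_A$ is nonzero, whereas you push $a$ into the socle $\m^{\ell_A - 1}$ of $A$ so that all Sweedler legs land in $\Prim(V)$, where $c$ has already been shown to vanish. The converses throughout, and the subcase $\dim_\bk(V/\m V) \geqslant 3$ of part~(1), are also fine. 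But there are two genuine gaps.

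First, in part~(1) your subcase $\dim_\bk(V/\m V) \leqslant 2 < \dim_\bk V$ is not merely sketchy: the stated conclusion is one the Jacobi identity cannot deliver. Equation~\eqref{Ejacobi2} is a constraint on $\kappa_A$ for a \emph{given} module $V$ — it is satisfied, for instance, by $\kappa_A \equiv 0$ for any $V$ — so it cannot ``force $\m V = 0$''; what must be shown in this subcase is still $\kappa_A^{\m} \equiv 0$. Moreover your reduction modulo $\m V$ in the first tensor slot genuinely loses the needed information here: it yields only that $\m V$ lies in the radical of $\kappa_A^{\m}$, after which $\kappa_A^{\m}$ descends to the at-most-one-dimensional space $\wedge^2(V/\m V)$ and the reduced equation becomes an identity, saying nothing about the surviving component. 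To finish one must return to the unreduced $V \otimes \m$-component and feed in a suitably chosen primitive vector (e.g.\ a nonzero element of $\Prim(V) \cap \m V$, which exists by Proposition~\ref{P1}) so that the Sweedler terms die while a term $v \otimes \kappa_A^{\m}(\cdot,\cdot)$ with $v \neq 0$ survives. The paper avoids your case split altogether by filtering $V$ by level: it takes $v_1$ of maximal level $k+1$ among the three vectors and reduces modulo $\lev{k}(V)$, which is the uniform version of the fix.

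Second, the step you flag in part~(2) as ``the main technical obstacle'' is a genuine gap, not a verification you have merely deferred: the implication ``$\sum \one{a}(w)\,\two{a} = 0$ for all $a \in \m$ implies $w \in \Prim(V)$'' does not follow from Assumption~\ref{Assume2}. Since $A$ has no counit, one cannot recover $a(w)$ from $(\mathrm{act} \otimes \id)(\Delta(a) \otimes w)$; for example $A = \bk[T]/(T^2)$ with the legitimate cocommutative coproduct $\Delta(T) := 0$ satisfies Assumption~\ref{Assume2}, and then the displayed sum vanishes for every $w$ and every module $V$, regardless of whether $\m V = 0$ (the same degeneration kills the left-hand side of \eqref{Ecompat1} and so also breaks the ``only if'' direction for $\kappa_A$). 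The paper's actual argument for this step, in Theorem~\ref{Tnilcox}(1), uses a grouplike basis, for which $\sum \one{a}(w)\,\two{a} = T_m(w) \otimes T_m$ with $T_m \neq 0$ in the $\bk$-free algebra $A$, so the implication does hold there; some nondegeneracy of $\Delta$ on $\m$ of that kind is what you would need to import to close part~(2) in the stated generality.
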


\begin{proof}\hfill
\begin{enumerate}
\item By Remark \ref{Rdim2}, and since $\kappa_V \equiv 0$, it suffices
to characterize the Jacobi identity \eqref{Ejacobi2} under the additional
assumption that $\dim V > 2$. Now write down the identity:
\[
\sum_\circlearrowright [\kappa(v_1,v_2),v_3] = 0, \qquad v_1, v_2, v_3
\in V.
\]

\noindent We may assume without loss of generality that the $v_i$ are
linearly independent in $V$. Moreover, the $\kappa_1$-component is killed
by commuting with elements of $V$. (Here, we work with a distinguished
$\bk$-basis of $\m$, along with $\{ 1_A \}$.) If we now define
$\gamma_{v,v'} := \kappa_A(v,v') - \kappa_1(v,v') \in \m$, then
\[
\sum_\circlearrowright \left( v_1 \gamma_{v_2,v_3} - \sum
\one{(\gamma_{v_2,v_3})}(v_1) \two{(\gamma_{v_2,v_3})} \right) = 0.
\]

\noindent Now assume without loss of generality that $v_1 \in
\lev{k+1}(V) \setminus \lev{k}(V)$ for some $k \geqslant 0$, and $v_1,
v_2, v_3 \in \lev{k+1}(V)$. Then $\one{(\gamma_{v_p,v_q})}(v_r) \in
\lev{k}(V)$ for all $\{ p,q,r \} = \{ 1, 2, 3 \}$. Working modulo
$\lev{k}(V)$, it follows by the linear independence of the $v_i$ that
$\gamma_{v_2,v_3} = 0$, and hence an entire summand in the above cyclic
sum vanishes. Repeat the same argument twice to show all summands are
zero, and hence, $\kappa_A \equiv \kappa_1$ on $V \wedge V$.

\item This is similar to the proof of Theorem \ref{Tnilcox}(1) and is
omitted for brevity.

\item Clearly $\scrh_{0,0}$ has the PBW property. Conversely, assume
$\scrh_{0,\kappa_A}$ has the PBW property. By a previous part, we have
$\im \kappa_A \subset \bk \cdot 1_A$. Suppose $\kappa_A \not\equiv 0$.
Then there exists $k \geqslant 0$ such that $\kappa_A(\lev{k+1}(V), V)
\not\equiv 0 = \kappa_A(\lev{k}(V),V)$. Choose nonzero $a \in \m$, and
any $v_0 \in \lev{k+1}(V)$, $v_1 \in V$ such that $\kappa_A(v_0, v_1)
\neq 0$. Then by Theorem \ref{Tpbw},
\[
0 \neq a \kappa_A(v_0, v_1) = \sum \kappa_A(\one{a}(v_0), \two{a}(v_1))
\three{a}.
\]

\noindent But by assumption $\one{a}(v_0) \in \lev{k}(V)$, whence the
right hand side vanishes. This contradiction shows that $\kappa_A \equiv
0$.
\end{enumerate}
\end{proof}

\subsection{Center and abelianization}

We end the paper by computing the center and abelianization of the
algebra $\hb$, i.e., the zeroth Hochschild (co)homology.

\begin{theorem}\label{Tcenter}
Suppose $A$ satisfies Assumption \ref{Assume2}, $V, \lambda, \kappa$ are
such that $\hb$ has the PBW property, and $\Prim(A) = \Prim(A^{op})$.
If $\lambda(\m, V) \subset \m$, then $\hb$ has trivial center, i.e.,
$HH^0(\hb,\hb) = \bk$.
\end{theorem}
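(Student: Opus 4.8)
The plan is to argue directly inside $\hb$ using its PBW basis, rather than passing to $\scrh_{0,0}$, since the latter would discard the hypothesis $\lambda(\m,V)\subseteq\m$ (this condition is invisible in the associated graded). So let $z\in HH^0(\hb,\hb)=Z(\hb)$, and by the PBW property write $z=\sum_j p_j a_j$, where the $p_j$ are $\bk$-combinations of ordered monomials in $V$ and $\{a_j\}$ is a $\bk$-basis of $A$ with $a_0=1_A$, $\{a_j\}_{j\neq 0}$ a basis of $\m$, chosen compatibly with the chain $A\supset\m\supset\m^2\supset\cdots$. The goal is $z\in\bk$. Two facts will be used repeatedly: first, $\lambda(\m^k,V)\subseteq\m^k$ for all $k$ (this follows from $\lambda(\m,V)\subseteq\m$ exactly as in the proof of Theorem \ref{Tsimple}); and second, $\m$ acts nilpotently on each $\Sym^N(V)$, since $a^{\ell_A}=0$ in $A$ for $a\in\m$ and therefore $a^{\ell_A}$ acts as $0$ on $\Sym^N(V)$ as well.

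The first step is to show the $1_A$-component $p_0$ is a scalar. Fix $a\in\m^{\ell_A-1}$; since $\m\cdot\m^{\ell_A-1}=\m^{\ell_A-1}\cdot\m=0$, the defining relations of $\hb$ give $za=p_0 a$ and $az=a\,p_0$. The second identity uses that, for $a\in\m^{\ell_A-1}$ and any ordered monomial $X$, the PBW expansion of $a\cdot X$ has every $A$-factor in $\m^{\ell_A-1}$ (an induction on $|X|$ using $\lambda(\m^{\ell_A-1},V)\subseteq\m^{\ell_A-1}$, $\Delta(\m^{\ell_A-1})\subseteq\m^{\ell_A-1}\otimes\m^{\ell_A-1}$, and that $\m^{\ell_A-1}$ is a two-sided ideal of $A$), whence right-multiplication by $a_j\in\m$ kills it. Thus $a\,p_0=p_0\,a$ in $\hb$ for every $a\in\m^{\ell_A-1}$. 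Assuming $N:=\deg p_0\geqslant 1$ and letting $q\in\Sym^N(V)$ be the top-degree component of $p_0$, comparison of the length-$N$ parts of $a\,p_0$ and $p_0\,a$ in the PBW basis yields $\sum\one a(q)\otimes\two a=q\otimes a$ in $\Sym^N(V)\otimes A$. If $q\neq 0$, writing $\Delta(a)=\sum_\alpha b_\alpha\otimes c_\alpha$ with the $c_\alpha$ linearly independent in $\m^{\ell_A-1}$ and extending to a $\bk$-basis of $A$ forces $a=\sum_\alpha\gamma_\alpha c_\alpha$ and $b_\alpha q=\gamma_\alpha q$ with $b_\alpha\in\m$; nilpotence of $b_\alpha$ on $\Sym^N(V)$ then gives $\gamma_\alpha=0$, so $a=0$, a contradiction. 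Hence $q=0$, i.e.\ $p_0\in\bk$; replacing $z$ by $z-p_0$, we may assume $z\in\hb\m=\Sym(V)\cdot\m$.

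The second step is an induction on the least $k\geqslant 1$ with $z\in\Sym(V)\m^k$, pushing $z$ into $\Sym(V)\m^{k+1}$ and finally into $\Sym(V)\m^{\ell_A}=0$. Each $\Sym(V)\m^p$ is a two-sided ideal of $\hb$ (this uses $\lambda(\m^p,V)\subseteq\m^p$ and that $\m^p$ is a two-sided ideal of $A$), so the image $\bar z$ of $z$ in $\bar\hb:=\hb/\Sym(V)\m^{k+1}$ is central and supported on $\Sym(V)\otimes(\m^k/\m^{k+1})$, the top $\m$-layer of $\bar\hb$ (here $\m^k\neq\m^{k+1}$ by nilpotence). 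Commuting $\bar z$ with $V$, isolating the component that lies in this top layer, and once more invoking that $\m$ acts nilpotently on $\Sym(V)$ — now in the subquotient, where the coproduct and the PBW reordering corrections $\kappa_A$ are read modulo $\m^{k+1}$ — one concludes $\bar z=0$, i.e.\ $z\in\Sym(V)\m^{k+1}$. Iterating over $k=1,\dots,\ell_A-1$ gives $z=0$, whence $Z(\hb)=\bk$.

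Regarding the hypotheses: the PBW property is what makes $\Sym(V)\otimes A\hookrightarrow\hb$, so that all the structural identities can be compared coefficientwise; $\lambda(\m,V)\subseteq\m$ keeps the $\m$-adic subspaces two-sided ideals and lets one move elements of $\m$ rightward past $\Sym(V)$ without leaving $\m$; and $\Prim(A)=\Prim(A^{op})$ ensures that the socle layers appearing in place of $\m^{\ell_A-1}$ (respectively, the top layers of the successive subquotients in the inductive form of the argument) are annihilated by $\m$ on both sides, so that the coefficient projections used above are clean. The main obstacle — the part needing genuine care rather than bookkeeping — is the inductive step: controlling the partial coaction of $\m^k/\m^{k+1}$ on $V$ together with the $\kappa_A$ corrections precisely enough that the elementary fact that a nilpotent operator has no nonzero eigenvector can still be applied inside $\hb/\Sym(V)\m^{k+1}$, whose $A$-part is the smaller algebra $A/\m^{k+1}$ acting only modulo $\m^{k+1}$ on the unchanged module $V$.
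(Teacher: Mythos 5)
Your first step is correct, and it takes a genuinely different route from the paper: the paper orders a basis of $V$ by level, shows by downward induction on the $\ell$-value that the $A$-coefficients $a_I$ of the nontrivial monomials satisfy $a_I\,\m=0$, and then needs the hypothesis $\Prim(A^{op})=\Prim(A)$ to convert right-primitivity into left-primitivity before commuting with a primitive vector of $V$. You instead commute $z$ with the socle $\m^{\ell_A-1}$, which is killed by $\m$ on \emph{both} sides automatically, extract the top $\Sym(V)$-degree to get the eigenvector equation $b_\alpha(q)=\gamma_\alpha q$, and use nilpotence of $b_\alpha\in\m$ on $\Sym^N(V)$. This is sound (the inputs $\lambda(\m^{\ell_A-1},V)\subset\m^{\ell_A-1}$ and $\Delta(\m^{\ell_A-1})\subset\m^{\ell_A-1}\otimes\m^{\ell_A-1}$ do follow as you say), and notably it never uses $\Prim(A)=\Prim(A^{op})$; if your second step were complete, you would have proved the theorem without that hypothesis.

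The gap is in the second step, which you only sketch and which you yourself flag as the part needing genuine care. The mechanism you name --- ``a nilpotent operator has no nonzero eigenvector'' inside $\hb/\Sym(V)\m^{k+1}$ --- does not apply as stated: writing $\bar z=\sum_\alpha q_\alpha\,\bar c_\alpha$ with $\{c_\alpha\}\subset\m^k$ projecting to a basis of $\m^k/\m^{k+1}$, and isolating the top $V$-degree component of $[\bar z,\bar v]=0$, one obtains $\sum_\alpha Q_\alpha\cdot\one{(c_\alpha)}(v)\otimes\overline{\two{(c_\alpha)}}=\sum_\alpha v\,Q_\alpha\otimes\bar c_\alpha$ in $\Sym^{N+1}(V)\otimes(\m^k/\m^{k+1})$, where $Q_\alpha$ is the degree-$N$ part of $q_\alpha$. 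Both sides are values of \emph{degree-raising} operators on $\Sym(V)\otimes(\m^k/\m^{k+1})$, so there is no eigenvalue for nilpotence to act on; some further idea (iterating the identity and counting $v$-degrees, using that $\m(V)$ is a proper subspace) is needed and is not supplied. The cleanest repair is essentially the paper's own device: take $0\neq v\in\Prim(V)$ (nonzero by Proposition \ref{P1}). Since $\Delta(\m)\subset\m\otimes\m$ and $c_\alpha\in\m$, one has $c_\alpha v=\lambda(c_\alpha,v)\in\m^k$, so $\bar z\,\bar v$ has $V$-degree at most $N$, while the degree-$(N+1)$ component of $\bar v\,\bar z$ is $\sum_\alpha (v\,Q_\alpha)\otimes\bar c_\alpha$; equating and using that multiplication by $v$ is injective on the domain $\Sym(V)$ forces $Q_\alpha=0$, and downward induction on $N$ gives $\bar z=0$. (In fact this single commutation with a primitive $v$ kills all of $z-p_0$ directly in $\hb$, making your $\m$-adic induction unnecessary.) Until the inductive step is replaced by such an argument, the proof is not complete.
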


\noindent Akin to the remarks following Assumption \ref{Assume2}, the
condition $\Prim(A) = \Prim(A^{op})$ is satisfied by all nil-Coxeter
algebras $NC_W$ for a finite Coxeter group $W$, as well as by
$NC_{A_1^n}({\bf d})$. The condition $\lambda(\m,V) \subset \m$ was
discussed in detail in Theorem \ref{Tsimple}.

\begin{proof}
We first choose a totally ordered basis of $V$ as follows: via
Proposition \ref{P1}, fix the filtration $0 = \lev{0}(V) \subset
\lev{1}(v) \subset \cdots \subset \lev{\ell_V}(V) = V$ according to the
level; then choose any $\bk$-basis $\mathcal{B}_k$ of the corresponding
vector space complement of $\lev{k-1}(V)$ in $\lev{k}(V)$ for $k = 1,
\dots, \ell_V$. Now index $\mathcal{B}_k$ by any totally ordered set
$S_k$, and let $S := \bigsqcup_k S_k$ be totally ordered via: $s_i < s_j$
if $i>j$ and $s_i \in S_i, s_j \in S_j$. Thus, every element of
$\mathcal{B}_1$ is primitive. Now use the PBW property to write any
vector in $\hb$ as $\sum_I v_I a_I$, where $I$ denotes a word in $S$
whose letters occur in non-increasing order, $a_I \in A$, and $v_I$
denotes the corresponding monomial in $\bigsqcup_k \mathcal{B}_k$.

Note that $\m$ acts on each $v_I$ and yields a linear combination of
elements $v_J$ such that $I > J$ in the lexicographic order on words in
$S$. More precisely, if we define $\ell(v_I)$ to be the sum of the levels
of the letters in the monomial $v_I$ (see Definition \ref{Dlevel}), then
$\m$ strictly reduces $\ell(v_I)$.

We now proceed to the proof. Suppose $0 \neq z = \sum_I v_I a_I$ is
central in $\hb$, with the $v_I$ linearly independent. We first
\textbf{claim} that for each non-empty $I$, the vector $a_I$ is primitive
in $A$. Indeed, choosing $a \in \m$ and writing out $az = za$ yields:
\[
\sum_I \left(\sum \one{a}(v_I) \two{a} + \lambda(a, v_I) \right) a_I =
\sum v_I a_I a.
\]

\noindent Choosing $I \neq \emptyset$ such that $v_I$ has maximal
$\ell$-value, it follows from above that $a_I a = 0$ for all $a \in \m$.
Hence $a_I \in \Prim(A^{op}) = \Prim(A)$ by assumption. Now say $v_I =
v_{i_k} \cdots v_{i_1}$ for some $i_j \in I$. We notice by induction on
$k$ that $a v_I a_I = 0$ as well. Indeed,
\[
a v_I a_I = \sum \one{a}(v_{i_k}) \cdot \two{a} v_{i_{k-1}} \cdots
v_{i_1}  a_I + \lambda(a,v_{i_k}) \cdot v_{i_{k-1}} \cdots v_{i_1}  a_I,
\]

\noindent and both expressions vanish by the induction hypothesis (the
base case of $k=1$ is easy). It follows that $a v_I a_I = 0 = v_I a_I a$,
where $I \neq \emptyset$ is such that $\ell(v_I)$ is maximal. Now cancel
these terms from the above equation and work with $I$ of the next highest
$\ell$-value. Repeating the above analysis shows the claim.

Next, let $v \in \Prim(V)$ and consider $zv = vz$ in $\hb$:
\[
a_\emptyset v + \sum_I v_I a_I v = v a_\emptyset + \sum v v_I a_I.
\]

\noindent Since $a_I \in \Prim(A) \subset \m$ (by Proposition \ref{P1}),
hence $a_I v = \lambda(a_I, v)$ for all non-empty $I$. Hence working
modulo the filtered degree $\leqslant 1$ piece and using the PBW
property, $a_I = 0$ if $I \neq \emptyset$.
In other words, $z = a_\emptyset \in A$. Since $A = \bk \cdot 1 \oplus
\m$, we may assume that $z \in \m$. Now choose nonzero primitive $v \in
V$; then,
\[
v z = z v = \sum \one{z}(v) \two{z} + \lambda(z,v) = \lambda(z,v),
\]

\noindent whence we get that $z = 0$ by the PBW property. Hence $Z(\hb) =
\bk \cdot 1$ as claimed.
\end{proof}

Next, we compute the zeroth Hochschild homology.

\begin{theorem}
Suppose $\lambda$ and $\kappa_V$ are identically zero, $\kappa_A : V
\wedge V \to \m$, and $\scrh_{0, \kappa_A}$ satisfies the PBW property.
If $\bk$ is an infinite field, then as abelian $\bk$-algebras, we have
\begin{align*}
HH_0(\hb,\hb) = &\ \frac{\hb}{[\hb,\hb]}\\
\cong &\ \bk \cdot 1 + \left( \Sym^+(V) \bigoplus \left( \m / ([\m,\m] +
A \cdot (\im \kappa_A) \cdot A) \right) \right),
\end{align*}

\noindent where the direct sum indicates that the two factors are ideals
and hence multiply to zero.
\end{theorem}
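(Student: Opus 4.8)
The goal is to identify $HH_0(\hb,\hb) = \hb/[\hb,\hb]$ as a $\bk$-algebra under the stated hypotheses ($\lambda = \kappa_V = 0$, $\im\kappa_A \subset \m$, PBW holds). I would begin by recording, using the PBW property, the decomposition $\hb \cong \Sym(V) \otimes_\bk A = \bk\cdot 1 \oplus \Sym^+(V) \oplus (\Sym(V)\otimes \m)$ as $\bk$-modules, with $\bk\cdot 1$ a direct summand. Since $\lambda = \kappa_V = 0$, the action relations read $av = \sum\one a(v)\two a$ and $[v,v'] = \kappa_A(v,v') \in \m$. The plan is to compute $[\hb,\hb]$ by splitting it according to this grading and identifying which pieces collapse. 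The main technical step is to show that the image of the quotient map is exactly $\bk\cdot 1 \oplus \big(\Sym^+(V) \oplus \m/([\m,\m] + A(\im\kappa_A)A)\big)$, i.e.\ that passing to the commutator quotient (i) makes the $\Sym(V)\otimes\m$-cross-terms disappear, (ii) makes $\Sym^+(V)$ already abelian and disjoint from $A$, and (iii) reduces the $A$-part exactly modulo $[\m,\m]$ together with the two-sided ideal generated by $\im\kappa_A$.

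\textbf{Key steps.} First, I would show $\Sym(V)\otimes\m \subset [\hb,\hb]$ modulo lower-order terms: for $v\in V$ and $a\in\m$, the relation $av - va = \sum\one a(v)\two a - va$; iterating and using nilpotence of $\m$ together with the level filtration of $V$ as in Definition~\ref{Dlevel}, one gets that any element $fa$ with $f\in\Sym^+(V)$, $a\in\m$ lies in $[\hb,\hb]$ plus strictly smaller terms — a descending induction on the level then kills all cross-terms. Second, $\Sym^+(V)\subset [\hb,\hb] + \bk\cdot 1$ is false; rather $\Sym^+(V)$ survives but becomes commutative, because $[v,v'] = \kappa_A(v,v')\in\m$, so modulo the $\m$-part (already shown trivial) $\Sym(V)$ is abelian; the commutators $[v,w]$ for $v,w\in V$ contribute exactly $\im\kappa_A\subset\m$ to the quotient, which is why $\im\kappa_A$ must be killed in the $\m$-factor. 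Third, for the $A$-part: $[\m,\m]$ is obviously in $[\hb,\hb]$, and for $a\in A$, $b\in\m$, the commutator $[a,b]$ together with the relation $va v^{-1}$-type manipulations (using $av = \sum\one a(v)\two a$, so $v$ conjugates $\m$ into a sum of $\m$-terms times $\Sym(V)$) shows $A\cdot(\im\kappa_A)\cdot A \subset [\hb,\hb]$. Fourth — the crucial \emph{upper} bound — I would construct an explicit trace-like linear functional, or more robustly a $\bk$-algebra surjection $\hb \to \bk\cdot 1 \oplus \Sym^+(V)/(0) \oplus \m/([\m,\m]+A(\im\kappa_A)A)$ landing in an abelian algebra with the two factors multiplying to zero, and check it kills all commutators; the existence of the target abelian algebra uses that $A(\im\kappa_A)A$ is a two-sided ideal inside $\m$ and that $\m/(\ldots)$ is then an abelian algebra with $\m\cdot\m = 0$ in the quotient. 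This reverse inclusion pins down $[\hb,\hb]$ exactly.

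\textbf{Main obstacle.} The hard part will be the \emph{lower bound} on $[\hb,\hb]$ — i.e.\ showing that every element of $\Sym(V)\otimes\m$ (except the part surviving in the $A$-factor) actually \emph{is} a sum of commutators, not merely that it vanishes under some natural quotient. This requires carefully iterating the relation $[a,v] = \sum\one a(v)\two a - va$ and tracking how the level filtration on $V$ (Lemma~\ref{Llevel}) interacts with the filtration by $\m$-powers; one wants a clean induction showing that if $f \in \Sym(V)$ has a monomial factor of positive level then $fa \in [\hb,\hb]$ plus terms of strictly smaller total level, forcing collapse. The interplay of the two filtrations (symmetric-algebra degree and $\m$-level) is exactly where the computation could get delicate, and where an inattentive argument would produce only the weaker statement that these elements die modulo $[\hb,\hb]$ rather than living inside it.

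\begin{remark}
The direct-sum claim — that $\Sym^+(V)$ and $\m/([\m,\m]+A(\im\kappa_A)A)$ are ideals in the quotient multiplying to zero — follows once the two bounds above are in place: in $HH_0$, $\Sym^+(V)\cdot\m$ is spanned by cross-terms $va$ which were shown to lie in $[\hb,\hb]$, while $\Sym^+(V)\cdot\Sym^+(V)\subset\Sym^+(V)$ and $\m\cdot\m\subset\m$ maps into $[\m,\m]$-cosets hence dies, so each factor squares to itself-or-zero appropriately and their product is zero.
\end{remark}
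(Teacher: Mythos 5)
Your overall architecture --- a lower bound on $[\hb,\hb]$ obtained by pushing $\Sym^+(V)\cdot\m$ into the commutator via the level filtration, followed by an upper bound identifying what survives --- matches the paper's, and your first step is essentially the paper's induction showing $\tangle{\lev{k}(V)}\cdot\m\subset[\hb,\hb]$. But two genuine gaps remain. First, your ``crucial upper bound'' is only announced, not carried out: you propose ``an explicit trace-like linear functional, or \dots a $\bk$-algebra surjection'' onto the claimed abelian target, but constructing such a map and verifying it respects the relations of $\hb$ is exactly the hard content, and nowhere do you use the hypothesis that $\bk$ is infinite. In the paper this hypothesis is essential: to see that a nonzero ordered-monomial sum ${\bf v}\in\Sym^+(V)$ survives in the abelianization, one evaluates the one-dimensional representations $M_\mu$ (available by Theorem \ref{Tsimple} because $\lambda\equiv 0$ and $\im\kappa_A\subset\m$); since $[\hb,\hb]$ kills every $M_\mu$ and $\bk$ is infinite, some $\mu\in V^*$ satisfies $\mu({\bf v})\neq 0$. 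Without this (or an equivalent separation argument) you have not ruled out that part of $\Sym^+(V)$ collapses. Second, the determination of $[\hb,\hb]\cap A$ is asserted rather than proved: you need both that $A\cdot(\im\kappa_A)\cdot A\subset[\hb,\hb]$ --- which requires the explicit computation $[av,v'a'] = a[v,v']a' + (\text{terms in }\tangle{V}\cdot\m)$ using $\Delta(\m)\subset\m\otimes\m$ and your step one, not ``$vav^{-1}$-type manipulations'' ($v$ is not invertible) --- and, conversely, that no other commutators land in $A$, which the paper obtains from the PBW decomposition $\hb=A\oplus\tangle{V}\cdot A$ and a case analysis of $[A,A]$, $[\tangle{V}\cdot A, A]$, and $[\tangle{V}\cdot A,\tangle{V}\cdot A]$.

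A smaller but real error: in your closing remark you claim $\m\cdot\m$ ``maps into $[\m,\m]$-cosets hence dies.'' It does not; for $a,b\in\m$ only $ab-ba$ lies in $[\m,\m]$, and the $\m$-factor of the abelianization need not square to zero. The theorem asserts only that the two factors $\Sym^+(V)$ and $\m/([\m,\m]+A\cdot(\im\kappa_A)\cdot A)$ are ideals whose \emph{mutual} product vanishes (because $\Sym^+(V)\cdot\m\subset[\hb,\hb]$ by your step one), not that either factor is square-zero.
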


\begin{proof}
The proof is in steps. The first step is to show that $[\hb,\hb]$
contains the image of $\tangle{V} \cdot \m$, where given a subspace $U
\subset V,\ \tangle{U} := TV \cdot U \cdot TV$ is the two-sided ideal in
$TV$ generated by $U$. More precisely, we show by induction on $k$ that
$\tangle{\lev{k}(V)} \cdot \m \subset [\hb, \hb]$. This is clear for
$k=0$, and given the result for $k$,
Assumption \ref{Assume2} implies that
\[
a(p) \in \tangle{\lev{k}(V)}, \qquad \forall a \in \m,\ p \in
\tangle{\lev{k+1}(V)}.
\]

\noindent It follows by the induction hypothesis that
\begin{align*}
p \cdot a = &\ [p,a] + a \cdot p\\
= &\ [p,a] + \sum \one{a}(p) \two{a} \in [\hb, \hb] + \tangle{\lev{k}(V)}
\m \subset [\hb, \hb].
\end{align*}

Next, fix a total ordering on a basis of $V$. Given any nonzero sum ${\bf
v}$ of monomial ``ordered'' words, since $\bk$ is an infinite field there
exists $\mu \in V^*$ such that $\mu({\bf v}) \neq 0$. Now since $\lambda
\equiv 0$, it follows by Theorem \ref{Tsimple} that $\hb$ has a
one-dimensional representation $M_\mu$ killed by $\m$, and on which $V$
acts by $\mu$. Since $[\hb, \hb]$ necessarily kills $M_\mu$, it follows
that ${\bf v}$ has nonzero image in $\hb / [\hb, \hb]$. Hence $V$
generates the symmetric algebra in $\hb / [\hb, \hb]$.

It remains to consider the image of $A$ inside the abelianization.
Note that $\im \kappa_A = [V,V]$ and $[\m,\m]$ lie in $[\hb, \hb]$,
and are subspaces of $\m$ by assumption. (That this image and $\Sym^+(V)$
are ideals follows from the above analysis.)
To complete the proof, it suffices to show the commutator intersects
$A$ in $[\m,\m] + A \cdot (\im \kappa_A) \cdot A$. Note 
$\hb = A \bigoplus \tangle{V} \cdot A$ by the PBW property. Now
$[A,A] = [\m,\m]$, while $[\tangle{V} \cdot A, A] \subset \tangle{V}
\cdot A$, which intersects $A$ trivially.

It remains to consider $[\tangle{V} \cdot A, \tangle{V} \cdot A] \cap A$.
By the relations in $\hb$ as well as the PBW property, the only elements
that occur here arise from the relations $[v,v'] = \kappa_A(v,v') \in A$,
and hence the intersection is contained in $A \cdot (\im \kappa_A) \cdot
A$. We now show that this containment is an equality, via the
\textbf{claim} that $a \kappa_A(v,v') a' \in [\hb, \hb]$ for  $v,v' \in
V$ and $a,a' \in A$.
The claim is obvious if $a=a'=1$. Otherwise we may assume that at least
one of $a,a'$ lies in $\m$. In this case,
\begin{align*}
[av,v'a'] = &\ avv'a' - v'a'av = a[v,v']a' + av'va' - v'a'av\\
= &\ a[v,v']a' + \sum \one{a}(v') \two{a}(v) \three{a} a' - v' \sum
\one{(a'a)}(v) \two{(a'a)}.
\end{align*}

\noindent Since $\Delta(\m) \subset \m \otimes \m$, it follows that all
summands of both sums lie in  $\tangle{V} \cdot \m$, hence in $[\hb,
\hb]$ from above. This proves the claim, and with it, the result.
\end{proof}

\section*{Acknowledgments}
The author would like to thank Sarah Witherspoon for many stimulating and
informative conversations regarding this paper. The author also thanks
Ivan Marin, Susan Montgomery, and Victor Reiner for useful references and
discussions. Finally, the author is grateful to Chelsea Walton for going
through a preliminary draft of this work and for her helpful suggestions.

\bibliographystyle{amsplain}

\begin{bibdiv}
\begin{biblist}

\bib{BaBe}{article}{
   author={Bazlov, Y.},
   author={Berenstein, A.},
   title={Braided doubles and rational Cherednik algebras},
   journal={Adv. Math.},
   volume={220},
   date={2009},
   number={5},
   pages={1466--1530},
   issn={0001-8708},
   review={\MR{2493618}},
   doi={\href{http://dx.doi.org/10.1016/j.aim.2008.11.004}{10.1016/j.aim.2008.11.004}}
   }

\bib{Be}{article}{
   author={Bergman, G. M.},
   title={The diamond lemma for ring theory},
   journal={Adv. Math.},
   volume={29},
   date={1978},
   number={2},
   pages={178--218},
   issn={0001-8708},
   review={\MR{506890}},
   doi={\href{http://dx.doi.org/10.1016/0001-8708(78)90010-5}{10.1016/0001-8708(78)90010-5}}
   }

\bib{BGG}{article}{
   author={Bern{\v{s}}te{\u\i}n, I. N.},
   author={Gel{\cprime}fand, I. M.},
   author={Gel{\cprime}fand, S. I.},
   title={Schubert cells, and the cohomology of the spaces $G/P$},
   journal={Russian Math. Surveys},
   volume={28},
   number={2},
   date={1973},
   pages={1--26},
   issn={0042-1316},
   review={\MR{0429933}},
   }

\bib{BNS}{article}{
   author={B{\"o}hm, G.},
   author={Nill, F.},
   author={Szlach{\'a}nyi, K.},
   title={Weak Hopf algebras. I. Integral theory and $C^*$-structure},
   journal={J. Algebra},
   volume={221},
   date={1999},
   number={2},
   pages={385--438},
   issn={0021-8693},
   review={\MR{1726707}},
   doi={\href{http://dx.doi.org/10.1006/jabr.1999.7984}{10.1006/jabr.1999.7984}}
   }

\bib{BG}{article}{
   author={Braverman, A.},
   author={Gaitsgory, D.},
   title={Poincar\'e--Birkhoff--Witt theorem for quadratic algebras of
   Koszul type},
   journal={J. Algebra},
   volume={181},
   date={1996},
   number={2},
   pages={315--328},
   issn={0021-8693},
   review={\MR{1383469}},
   doi={\href{http://dx.doi.org/10.1006/jabr.1996.0122}{10.1006/jabr.1996.0122}}
   }

\bib{Br}{article}{
   author={Brichard, J.},
   title={The center of the nilCoxeter and $0$-Hecke algebras},
   journal={preprint, available at
   \href{http://arxiv.org/abs/0811.2590}{arXiv:0811.2590}},
   date={2008}}

\bib{BMR1}{article}{
   author={Brou{\'e}, M.},
   author={Malle, G.},
   author={Rouquier, R.},
   title={On complex reflection groups and their associated braid groups},
   conference={
      title={\href{http://bookstore.ams.org/CMSAMS-16}{Representations of
      groups}},
      address={Banff, AB},
      date={1994}},
   book={
      series={CMS Conf. Proc.},
      volume={16},
      publisher={Amer. Math. Soc., Providence, RI}},
   date={1995},
   pages={1--13},
   review={\MR{1357192}} }

\bib{BMR2}{article}{
   author={{B}rou{\'e}, M.},
   author={Malle, G.},
   author={Rouquier, R.},
   title={Complex reflection groups, braid groups, Hecke algebras},
   journal={J. Reine Angew. Math.},
   volume={500},
   date={1998},
   pages={127--190},
   issn={0075-4102},
   review={\MR{1637497}},
   doi={\href{http://dx.doi.org/10.1515/crll.1998.064}{10.1515/crll.1998.064}}
   }

\bib{Cox}{article}{
   author={Coxeter, H. S. M.},
   title={Discrete groups generated by reflections},
   journal={Ann. of Math. (2)},
   volume={35},
   date={1934},
   number={3},
   pages={588--621},
   issn={0003-486X},
   review={\MR{1503182}},
   doi={\href{http://dx.doi.org/10.2307/1968753}{10.2307/1968753}} }

\bib{Co}{article}{
   author={{C}oxeter, H. S. M.},
   title={Factor groups of the braid group},
   conference={
      title={Proceedings of the 4th Canadian Mathematical Congress},
      address={Banff, AB},
      date={1957}},
   book={
      publisher={University of Toronto Press}},
   date={1959},
   pages={95--122}}

\bib{CBH}{article}{
   author={Crawley-Boevey, W.},
   author={Holland, M. P.},
   title={Noncommutative deformations of Kleinian singularities},
   journal={Duke Math. J.},
   volume={92},
   date={1998},
   number={3},
   pages={605--635},
   issn={0012-7094},
   review={\MR{1620538}},
   doi={\href{http://dx.doi.org/10.1215/S0012-7094-98-09218-3}{10.1215/S0012-7094-98-09218-3}}
   }

\bib{Dr}{article}{
   author={Drinfeld, V. G.},
   title={Degenerate affine Hecke algebras and Yangians},
   journal={Funct. Anal. Appl.},
   volume={20},
   date={1986},
   number={1},
   pages={58--60},
   issn={0374-1990},
   review={\MR{831053}},
   doi={\href{http://dx.doi.org/10.1007/BF01077318}{10.1007/BF01077318}}
   }

\bib{EGG}{article}{
   author={Etingof, P.},
   author={Gan, W. L.},
   author={Ginzburg, V.},
   title={Continuous Hecke algebras},
   journal={Transform. Groups},
   volume={10},
   date={2005},
   number={3-4},
   pages={423--447},
   issn={1083-4362},
   review={\MR{2183119}},
   doi={\href{http://dx.doi.org/10.1007/s00031-005-0404-2}{10.1007/s00031-005-0404-2}}
   }

\bib{EG}{article}{
   author={Etingof, P.},
   author={Ginzburg, V.},
   title={Symplectic reflection algebras, Calogero-Moser space, and deformed
   Harish-Chandra homomorphism},
   journal={Invent. Math.},
   volume={147},
   date={2002},
   number={2},
   pages={243--348},
   issn={0020-9910},
   review={\MR{1881922}},
   doi={\href{http://dx.doi.org/10.1007/s002220100171}{10.1007/s002220100171}}
   }

\bib{ENO}{article}{
   author={Etingof, P.},
   author={Nikshych, D.},
   author={Ostrik, V.},
   title={On fusion categories},
   journal={Ann. of Math. (2)},
   volume={162},
   date={2005},
   number={2},
   pages={581--642},
   issn={0003-486X},
   review={\MR{2183279}},
   doi={\href{http://dx.doi.org/10.4007/annals.2005.162.581}{10.4007/annals.2005.162.581}}
   }

\bib{Fa}{article}{
   author={Fayers, M.},
   title={0-Hecke algebras of finite Coxeter groups},
   journal={J. Pure Appl. Algebra},
   volume={199},
   date={2005},
   number={1-3},
   pages={27--41},
   issn={0022-4049},
   review={\MR{2134290}},
   doi={\href{http://dx.doi.org/10.1016/j.jpaa.2004.12.001}{10.1016/j.jpaa.2004.12.001}}
   }

\bib{FS}{article}{
   author={Fomin, S.},
   author={Stanley, R. P.},
   title={Schubert polynomials and the nil-Coxeter algebra},
   journal={Adv. Math.},
   volume={103},
   date={1994},
   number={2},
   pages={196--207},
   issn={0001-8708},
   review={\MR{1265793}},
   doi={\href{http://dx.doi.org/10.1006/aima.1994.1009}{10.1006/aima.1994.1009}}
   }

\bib{Gr}{article}{
   author={Griffeth, S.},
   title={Towards a combinatorial representation theory for the rational
   Cherednik algebra of type $G(r,p,n)$},
   journal={Proc. Edinb. Math. Soc. (2)},
   volume={53},
   date={2010},
   number={2},
   pages={419--445},
   issn={0013-0915},
   review={\MR{2653242}},
   doi={\href{http://dx.doi.org/10.1017/S0013091508000904}{10.1017/S0013091508000904}}
   }

\bib{HOZ}{article}{
   author={He, J.-W.},
   author={Van Oystaeyen, F.},
   author={Zhang, Y.},
   title={PBW deformations of Koszul algebras over a nonsemisimple ring},
   journal={Math. Z.},
   volume={279},
   date={2015},
   number={1-2},
   pages={185--210},
   issn={0025-5874},
   review={\MR{3299848}},
   doi={\href{http://dx.doi.org/10.1007/s00209-014-1362-y}{10.1007/s00209-014-1362-y}}
   }

\bib{He}{article}{
   author={He, X.},
   title={A subalgebra of 0-Hecke algebra},
   journal={J. Algebra},
   volume={322},
   date={2009},
   number={11},
   pages={4030--4039},
   issn={0021-8693},
   review={\MR{2556136}},
   doi={\href{http://dx.doi.org/10.1016/j.jalgebra.2009.04.003}{10.1016/j.jalgebra.2009.04.003}}
   }

\bib{HST}{article}{
   author={Hivert, F.},
   author={Schilling, A.},
   author={Thi{\'e}ry, N. M.},
   title={Hecke group algebras as quotients of affine Hecke algebras at
   level 0},
   journal={J. Combin. Theory Ser. A},
   volume={116},
   date={2009},
   number={4},
   pages={844--863},
   issn={0097-3165},
   review={\MR{2513638}},
   doi={\href{http://dx.doi.org/10.1016/j.jcta.2008.11.010}{10.1016/j.jcta.2008.11.010}}
   }

\bib{Hum}{book}{
   author={Humphreys, J. E.},
   title={Reflection groups and Coxeter groups},
   series={Cambridge Studies in Advanced Mathematics},
   volume={29},
   publisher={Cambridge University Press, Cambridge},
   date={1990},
   pages={xii+204},
   isbn={0-521-37510-X},
   review={\MR{1066460}},
   doi={\href{http://dx.doi.org/10.1017/CBO9780511623646}{10.1017/CBO9780511623646}}
   }

\bib{Jo}{book}{
   author={Joseph, A.},
   title={Quantum groups and their primitive ideals},
   series={Ergebnisse der Mathematik und ihrer Grenzgebiete (3) [Results in
   Mathematics and Related Areas (3)]},
   volume={29},
   publisher={Springer-Verlag, Berlin},
   date={1995},
   pages={x+383},
   isbn={3-540-57057-8},
   review={\MR{1315966}},
   doi={\href{http://dx.doi.org/10.1007/978-3-642-78400-2}{10.1007/978-3-642-78400-2}}
   }

\bib{Kh}{article}{
   author={Khare, A.},
   title={Category $\scr O$ over a deformation of the symplectic oscillator
   algebra},
   journal={J. Pure Appl. Algebra},
   volume={195},
   date={2005},
   number={2},
   pages={131--166},
   issn={0022-4049},
   review={\MR{2108468}},
   doi={\href{http://dx.doi.org/10.1016/j.jpaa.2004.06.004}{10.1016/j.jpaa.2004.06.004}}
   }

\bib{Khnilcox}{article}{
   author={{K}hare, A.},
   title={Generalized nil-Coxeter algebras over discrete complex
   reflection groups},
   journal={preprint, available at
   \href{http://arxiv.org/abs/1601.08231}{arXiv:1601.08231}},
   date={2016}}

\bib{Kho}{article}{
   author={Khovanov, M.},
   title={Nilcoxeter algebras categorify the Weyl algebra},
   journal={Comm. Algebra},
   volume={29},
   date={2001},
   number={11},
   pages={5033--5052},
   issn={0092-7872},
   review={\MR{1856929}},
   doi={\href{http://dx.doi.org/10.1081/AGB-100106800}{10.1081/AGB-100106800}}
   }

\bib{KL}{article}{
   author={Khovanov, M.},
   author={Lauda, A. D.},
   title={A diagrammatic approach to categorification of quantum groups. I},
   journal={Represent. Theory},
   volume={13},
   date={2009},
   pages={309--347},
   issn={1088-4165},
   review={\MR{2525917}},
   doi={\href{http://dx.doi.org/10.1090/S1088-4165-09-00346-X}{10.1090/S1088-4165-09-00346-X}}
   }

\bib{KK}{article}{
   author={Kostant, B.},
   author={Kumar, S.},
   title={The nil Hecke ring and cohomology of $G/P$ for a Kac--Moody
   group $G$},
   journal={Adv. Math.},
   volume={62},
   date={1986},
   number={3},
   pages={187--237},
   issn={0001-8708},
   review={\MR{866159}},
   doi={\href{http://dx.doi.org/10.1016/0001-8708(86)90101-5}{10.1016/0001-8708(86)90101-5}}
   }

\bib{Ko}{book}{
   author={Koster, D. W.},
   title={COMPLEX REFLECTION GROUPS},
   note={Thesis (Ph.D.)--The University of Wisconsin - Madison},
   publisher={ProQuest LLC, Ann Arbor, MI},
   date={1975},
   pages={71},
   review={\MR{2625485}} }

\bib{LS}{article}{
   author={Lascoux, A.},
   author={Sch{\"u}tzenberger, M.-P.},
   title={Fonctorialit\'e des polyn\^omes de Schubert},
   language={French, with English summary},
   conference={
      title={Invariant theory},
      address={Denton, TX},
      date={1986},
   },
   book={
      series={Contemp. Math.},
      volume={88},
      publisher={Amer. Math. Soc., Providence, RI},
   },
   date={1989},
   pages={585--598},
   review={\MR{1000001}},
   doi={\href{http://dx.doi.org/10.1090/conm/088/1000001}{10.1090/conm/088/1000001}}
   }

\bib{LT}{article}{
   author={Losev, I.},
   author={Tsymbaliuk, A.},
   title={Infinitesimal Cherednik algebras as $W$-algebras},
   journal={Transform. Groups},
   volume={19},
   date={2014},
   number={2},
   pages={495--526},
   issn={1083-4362},
   review={\MR{3200433}},
   doi={\href{http://dx.doi.org/10.1007/s00031-014-9261-1}{10.1007/s00031-014-9261-1}}
   }

\bib{Lu}{article}{
   author={Lusztig, G.},
   title={Affine Hecke algebras and their graded version},
   journal={J. Amer. Math. Soc.},
   volume={2},
   date={1989},
   number={3},
   pages={599--635},
   issn={0894-0347},
   review={\MR{991016}},
   doi={\href{http://dx.doi.org/10.2307/1990945}{10.2307/1990945}} }

\bib{Nor}{article}{
   author={Norton, P. N.},
   title={$0$-Hecke algebras},
   journal={J. Austral. Math. Soc. Ser. A},
   volume={27},
   date={1979},
   number={3},
   pages={337--357},
   issn={0263-6115},
   review={\MR{532754}},
   doi={\href{http://dx.doi.org/10.1017/S1446788700012453}{10.1017/S1446788700012453}}
   }

\bib{ST}{article}{
   author={Shephard, G. C.},
   author={Todd, J. A.},
   title={Finite unitary reflection groups},
   journal={Canadian J. Math.},
   volume={6},
   date={1954},
   pages={274--304},
   issn={0008-414X},
   review={\MR{0059914}},
   doi={\href{http://dx.doi.org/10.4153/CJM-1954-028-3}{10.4153/CJM-1954-028-3}}
   }

\bib{SW1}{article}{
   author={Shepler, A. V.},
   author={Witherspoon, S.},
   title={Drinfeld orbifold algebras},
   journal={Pacific J. Math.},
   volume={259},
   date={2012},
   number={1},
   pages={161--193},
   issn={0030-8730},
   review={\MR{2988488}},
   doi={\href{http://dx.doi.org/10.2140/pjm.2012.259.161}{10.2140/pjm.2012.259.161}}
   }

\bib{SW2}{article}{
   author={Shepler, A. V.},
   author={{W}itherspoon, S.},
   title={A Poincar\'e--Birkhoff--Witt theorem for quadratic algebras
   with group actions},
   journal={Trans. Amer. Math. Soc.},
   volume={366},
   date={2014},
   number={12},
   pages={6483--6506},
   issn={0002-9947},
   review={\MR{3267016}},
   doi={\href{http://dx.doi.org/10.1090/S0002-9947-2014-06118-7}{10.1090/S0002-9947-2014-06118-7}}
   }

\bib{SW3}{article}{
   author={{S}hepler, A. V.},
   author={Witherspoon, S.},
   title={PBW deformations of skew group algebras in positive
   characteristic},
   journal={Algebr. Represent. Theory},
   volume={18},
   date={2015},
   number={1},
   pages={257--280},
   issn={1386-923X},
   review={\MR{3317849}},
   doi={\href{http://dx.doi.org/10.1007/s10468-014-9492-9}{10.1007/s10468-014-9492-9}}
   }

\bib{SW4}{article}{
   author={Shepler, A. V.},
   author={Witherspoon, S.},
   title={Poincar\'e--Birkhoff--Witt Theorems},
   conference={
      title={Commutative Algebra and Noncommutative Algebraic Geometry,
      \href{http://www.cambridge.org/yt/academic/subjects/mathematics/algebra/commutative-algebra-and-noncommutative-algebraic-geometry-volume-1}{Volume
      I: Expository Articles}} },
   book={
      series={Mathematical Sciences Research Institute Proceedings},
      volume={67},
      publisher={Cambridge University Press}},
   date={2015},
   pages={259--290}}

\bib{TvW}{article}{
   author={Tewari, V. V.},
   author={van Willigenburg, S. J.},
   title={Modules of the 0-Hecke algebra and quasisymmetric Schur functions},
   journal={Adv. Math.},
   volume={285},
   date={2015},
   pages={1025--1065},
   issn={0001-8708},
   review={\MR{3406520}},
   doi={\href{http://dx.doi.org/10.1016/j.aim.2015.08.012}{10.1016/j.aim.2015.08.012}}
   }

\bib{Ti}{article}{
   author={Tikaradze, A.},
   title={On maximal primitive quotients of infinitesimal Cherednik algebras
   of $\germ{gl}_n$},
   journal={J. Algebra},
   volume={355},
   date={2012},
   pages={171--175},
   issn={0021-8693},
   review={\MR{2889538}},
   doi={\href{http://dx.doi.org/10.1016/j.jalgebra.2012.01.013}{10.1016/j.jalgebra.2012.01.013}}
   }

\bib{KT}{article}{
   author={Tikaradze, A.},
   author={Khare, A.},
   title={Center and representations of infinitesimal Hecke algebras of
   $\germ{sl}_2$},
   journal={Comm. Algebra},
   volume={38},
   date={2010},
   number={2},
   pages={405--439},
   issn={0092-7872},
   review={\MR{2598890}},
   doi={\href{http://dx.doi.org/10.1080/00927870903448740}{10.1080/00927870903448740}}
   }

\bib{Ts}{article}{
   author={Tsymbaliuk, A.},
   title={Infinitesimal Hecke algebras of $\germ{so}_N$},
   journal={J. Pure Appl. Algebra},
   volume={219},
   date={2015},
   number={6},
   pages={2046--2061},
   issn={0022-4049},
   review={\MR{3299718}},
   doi={\href{http://dx.doi.org/10.1016/j.jpaa.2014.07.022}{10.1016/j.jpaa.2014.07.022}}
   }

\bib{WW1}{article}{
   author={Walton, C.},
   author={Witherspoon, S.},
   title={Poincar\'e--Birkhoff--Witt deformations of smash product
   algebras from Hopf actions on Koszul algebras},
   journal={Algebra Number Theory},
   volume={8},
   date={2014},
   number={7},
   pages={1701--1731},
   issn={1937-0652},
   review={\MR{3272279}},
   doi={\href{http://dx.doi.org/10.2140/ant.2014.8.1701}{10.2140/ant.2014.8.1701}}
   }

\bib{WW2}{article}{
   author={{W}alton, C.},
   author={Witherspoon, S.},
   title={PBW deformations of braided products},
   journal={preprint, available at
   \href{http://arxiv.org/abs/1601.02274}{arXiv:1601.02274}},
   date={2016}}

\bib{Y}{article}{
   author={Yang, G.},
   title={Nil-Coxeter algebras and nil-Ariki-Koike algebras},
   journal={Front. Math. China},
   volume={10},
   date={2015},
   number={6},
   pages={1473--1481},
   issn={1673-3452},
   review={\MR{3403207}},
   doi={\href{http://dx.doi.org/10.1007/s11464-015-0498-3}{10.1007/s11464-015-0498-3}}
   }

\end{biblist}
\end{bibdiv}

\end{document}